\documentclass[11pt,reqno]{amsart}

\usepackage[utf8]{inputenc}
\usepackage{amsmath, amssymb, amsthm, mathrsfs}
\usepackage{geometry}
\usepackage[T1]{fontenc}
\usepackage{setspace}

\usepackage{enumitem}
\usepackage{tensor}
\usepackage{graphicx}
\usepackage{subcaption}
\usepackage{xurl}

\usepackage{aliascnt} 
\usepackage{hyperref}
\usepackage[capitalize]{cleveref}

\theoremstyle{plain}
\newtheorem{maintheorem}{Theorem}

\newtheorem*{theorem*}{Theorem}
\newtheorem{theorem}{Theorem}[section]
\newaliascnt{lemma}{theorem}
\newtheorem{lemma}[lemma]{Lemma}
\aliascntresetthe{lemma}
\newaliascnt{proposition}{theorem}
\newtheorem{proposition}[proposition]{Proposition}
\aliascntresetthe{proposition}
\newaliascnt{corollary}{theorem}
\newtheorem{corollary}[corollary]{Corollary}
\aliascntresetthe{corollary}
\theoremstyle{definition}
\newaliascnt{definition}{theorem}
\newtheorem{definition}[definition]{Definition}
\aliascntresetthe{definition}
\newaliascnt{example}{theorem}
\newtheorem{example}[example]{Example}
\aliascntresetthe{example}
\newaliascnt{claim}{theorem}
\newtheorem{claim}[claim]{Claim}
\aliascntresetthe{claim}
\crefname{claim}{Claim}{Claims}
\Crefname{claim}{Claim}{Claims}
\theoremstyle{remark}
\newaliascnt{remark}{theorem}
\newtheorem{remark}[remark]{Remark}
\aliascntresetthe{remark}
\crefname{equation}{eq.}{eqs.}
\Crefname{equation}{eq.}{eqs.}
\newcommand{\R}{\mathbb{R}}
\newcommand{\Z}{\mathbb{Z}}
\newcommand{\N}{\mathbb{N}}
\newcommand{\C}{\mathbb{C}}
\newcommand{\T}{\mathbb{T}}
\newcommand{\tr}{\operatorname{tr}}
\newcommand{\diverg}{\operatorname{div}}
\newcommand{\hess}{\operatorname{Hess}}

\newcommand{\ric}{\operatorname{Ric}}
\newcommand{\scal}{\operatorname{scal}}
\newcommand{\iso}{\operatorname{Iso}}
\newcommand{\fix}{\operatorname{Fix}}
\newcommand{\ima}{\operatorname{Im}}
\newcommand{\conj}{\operatorname{Conj}}
\newcommand{\swap}{\operatorname{Swap}}
\newcommand{\berger}{\mathbb{S}^3_\tau}
\newcommand{\Sphere}{\mathbb{S}}
\newcommand{\campos}{\mathfrak{X}}

\newcommand{\willmore}{\mathcal{W}}
\newcommand{\traceless}{\mathring}
\newcommand{\nablaAmbient}{\overline{\nabla}}
\newcommand{\ip}[2]{\langle #1, #2 \rangle}
\newcommand{\hopftori}{\Sigma^\beta}

\title{Bifurcations of the Clifford Torus as Willmore Surfaces in Berger Spheres}
\author{Caio B. Rodrigues}
\date{}
\address{IMPA - Instituto de Matemática Pura e Aplicada, Rio de Janeiro, RJ, Brazil, 22460-320.}
\email{caio.rodrigues@impa.br}

\begin{document}
 \begin{abstract} 
The Clifford torus in a Berger sphere with parameter $\tau$ is a critical point of the Willmore functional for every $\tau>0$, yielding a smooth path of Willmore surfaces. By estimating the Morse index along this path, we apply bifurcation theory to produce new symmetric Willmore tori emerging from the Clifford torus.
\end{abstract}
 \maketitle
 \section{Introduction}

Let $(M,g)$ be a 3--dimensional Riemannian manifold and $\Sigma \hookrightarrow M$ an isometrically immersed, compact Riemannian surface without boundary. In this work, we are interested in the Willmore functional (energy)  
\begin{equation*}
\willmore(\Sigma) = \int_\Sigma |\traceless{A}|^2 \, d\Sigma,    
\end{equation*}
where $\traceless{A}$ is the traceless second fundamental form of $\Sigma$. In some sense, the simplest surfaces in $M$, after the totally geodesic ones, are the umbilic surfaces (if they exist at all). The functional $\willmore$ measures how far a surface is from being umbilical. In particular, $\Sigma$ is umbilical exactly when $\willmore(\Sigma) = 0$. The infimum of $\willmore$ is zero for any Riemannian ambient manifold, since the Willmore energy of geodesic balls goes to zero as the radius goes to zero. Also, the Willmore functional is invariant under conformal changes of metric \cite{Weiner1978}. In particular, the study of $\willmore$ is equivalent in the space forms, where much is known (e.g. \cite{Bryant1984}, \cite{Pinkall1985}, \cite{Kusner1987}, \cite{Kusner&Cya2024}, and \cite{Riviere&Bernard2014}). 

\medskip
By the Gauss-Bonnet Theorem, the study of $\willmore$ is equivalent to the study of the energy
\begin{equation*}
    \mathscr{W}(\Sigma) = \int_\Sigma H^2 + \overline{K} \, d\Sigma,
\end{equation*}
where $\overline{K}$ is the ambient sectional curvature computed at the tangent planes of $\Sigma$. In the case $M = \R^3$, the functional $\mathscr{W}$ coincides with the classical Willmore energy. In \cite{Willmore1965}, it was proved that the infimum of $\mathscr{W}$ is $4\pi$, and it is attained only by round spheres. He proposed that the ``nicest'' torus in $\R^3$ is the one minimising the functional $\mathscr{W}$ restricted to compact orientable immersed surfaces of genus one. After computations within the class of rotationally symmetric circular tori, one can see that the torus with the least energy within this class is the stereographic projection of the Clifford torus in $\Sphere^3$ (up to scaling). Willmore conjecture that this is the minimizer of what is called today the Willmore energy. The conjecture resisted for almost half a century until it was solved by Marques and Neves using min--max theory \cite{Marques&Neves}. For a survey on this matter, see \cite{Marque&NevesSurvey2014}.

\medskip
The Euler--Lagrange equation for Willmore submanifolds was computed in full generality in \cite{Hu&Li2004}. The formula is given in coordinates with respect to a local orthonormal frame adapted to the submanifold. Nonetheless, several authors have computed the Euler--Lagrange equation for Willmore submanifolds in particular geometric settings from different viewpoints (e.g. \cite{Mondino&Riviere2013}, \cite{wang2025willmoresurfaces4dimensionalconformal}, and \cite{Fabio&Alma2022}). Here, we compute it in the particular case of surfaces, where the formula is expressed in a coordinate--free fashion (see \cref{euler-lagrange equation of willmore energy}). Critical points of $\willmore$ are called Willmore surfaces. 

\medskip
A natural way to proceed with the investigation of the Willmore energy in spaces other than space forms is to move on to the next simplest case, namely, the homogeneous geometries. In this work, we are interested in Berger spheres $\berger \subset \C^2$, $\tau>0$. They are homogeneous deformations of the round sphere $\Sphere^3_1$ obtained by scaling the Hopf circles $\Sphere^1 \to \Sphere^3 \to \Sphere^2\left(\frac{1}{2}\right)$ by $\tau>0$. We properly introduce Berger spheres in \cref{section: first and second variation on Bergers}. For further reading on Berger spheres, see, for example, \cite{Daniel2007}, \cite{Torralbo2012} and \cite{Torralbo&Urbano2012}.

\medskip
Souam and Toubiana showed that there are no umbilical surfaces in $\berger$ (even non--complete ones, see \cite[Th. 1.]{Souam&Tobiana2009}). In particular, the infimum of $\willmore$ is not attained. On the other hand, Carlotto and Mondino showed that there are Willmore $2$-spheres in $\berger$ for $\tau$ sufficiently close to $1$ (the same is true for any analytic metric sufficiently close to a round metric on $\Sphere^3$, see \cite[Th. 1.1.]{Carlotto&Mondino2014}).

\medskip
To the best of our knowledge, all known explicit examples of Willmore surfaces in $\berger$, $\tau \ne 1$, were given in \cite{Manuel1997} (see \cref{corollary: Classification of Hopf Willmore surfaces}), which we briefly describe now. Given a closed curve $\beta$ in $\Sphere^2\left(\frac{1}{2}\right) \subset \C \times \R$, denote by $\hopftori$ the inverse image of $\beta$ by the Hopf projection $\Sphere^3 \to \Sphere^2\left(\frac{1}{2}\right)$. The surface $\hopftori$ is known as a Hopf torus, and the family $\{\hopftori\}_\beta$ as the Hopf tori family. Endow $\hopftori$ with the metric induced from $\berger$. An elastica curve $\beta$ in $\Sphere^2\left(\frac{1}{2}\right)$ is a critical point of the functional 
\begin{equation*}
    \mathcal{F}_\lambda(\beta) = \int_I (k_g^\beta(t))^2 + \lambda \, dt,
\end{equation*}
where $k_g^\beta$ is the geodesic curvature of $\beta$ in $\Sphere^2\left(\frac{1}{2}\right)$ and $\lambda \in \R$ is a constant playing the role of a Lagrange multiplier. Using the principle of symmetric criticality, Barros showed that $\hopftori$ is a Willmore surface of $\berger$ exactly when $\beta$ is an elastica of $\mathcal{F}_\lambda$, with $\lambda = 4\tau^2$. Langer and Singer classified elastica curves in the round $2$-sphere (see \cite{LangerSinger1984} and \cite[Cor. 4.2.]{Langer&Singer1987}). 

\medskip
From the Euler--Lagrange equation of Willmore surfaces in the round sphere $\Sphere^3_1$ it is immediate that minimal surfaces are also Willmore. The previous construction was firstly used by Pinkall, in the case $\tau = 1$, to give the first examples of Willmore surfaces in $\Sphere^3_1$ that are not conformal images of minimal surfaces. Namely, every Hopf torus associated to a non-geodesic elastica of $\mathcal{F}_4$, see \cite{Pinkall1985}.

\medskip
Two Hopf tori associated to elastica curves are of special interest. Let $\Sigma_\tau$, $\tau>0$, denote the Clifford torus 
\begin{equation*}
    \Sphere^1\left(\frac{1}{\sqrt{2}}\right) \times \Sphere^1\left(\frac{1}{\sqrt{2}}\right),
\end{equation*} 
which is the Hopf torus associated to the equator $\Sphere^2\left(\frac{1}{2}\right) \cap (\C \times \{0\})$, endowed with the metric induced from $\berger$. Let $\widetilde{\Sigma}_\tau = \Sigma^{\tilde{\beta}}$, $\tau>\sqrt{2}$, denote the Hopf torus 
\begin{equation*}
    \Sphere^1(a(\tau)) \times \Sphere^1(b(\tau)),
\end{equation*}
where 
\begin{align*}
     a(\tau)  = \sqrt{\frac{1}{2}+c(\tau)}, \quad  b(\tau) = \sqrt{\frac{1}{2}-c(\tau)}, \quad \text{and} \quad  c(\tau) = \frac{1}{2}\sqrt{\frac{\tau^2-2}{\tau^2-1}},
\end{align*} 
are the radii of the circles and the height of the horizontal circle $\Sphere^2\left(\frac{1}{2}\right) \cap (\C \times \{\frac{1}{2}\sqrt{\frac{\tau^2-2}{\tau^2-1}}\})$ parametrized by the elastica $\tilde{\beta} = \tilde{\beta}_\tau$, endowed with the metric induced from $\berger$. These two tori are Willmore surfaces (see \cref{corollary: Classification of Hopf Willmore surfaces,remark: trivial bifurcation at sqrt 2}).

\medskip
Consider the maps 
\begin{equation*}
    \begin{array}{ccc}
  \begin{array}{rcc}
    \swap \colon \Sphere^3 & \to & \Sphere^3 \\
    (z,w) & \mapsto & (w,z)
  \end{array}
  & \quad \text{and} \quad &
  \begin{array}{rcc}
    \rho \colon \Sphere^2\left(\frac{1}{2}\right) & \to & \Sphere^2\left(\frac{1}{2}\right) \\
    (z,c) & \mapsto & (\overline{z},-c)
  \end{array}
\end{array}.
\end{equation*}
They are involutions that define isometries of $\berger$, for every $\tau>0$, and of $\Sphere^2\left(\frac{1}{2}\right)$, respectively. Note that $\swap$ is a fibre-preserving isometry with respect to $\pi$, covering $\rho$. Since $\rho$ sends the horizontal circle at height $c(\tau)$ to the one at height $-c(\tau)$ and both of them approach the equator as $\tau \searrow \sqrt{2}$, both $\widetilde{\Sigma}_\tau$ and $\swap(\widetilde{\Sigma}_\tau)$ approach $\Sigma_\tau$ as $\tau \searrow \sqrt{2}$.

\medskip
This can be interpreted as a pitchfork bifurcation of the equator as an elastica in $\Sphere^2\left(\frac{1}{2}\right)$, which lifts to a pitchfork bifurcation of the Clifford torus as a Willmore surface in $\berger$. Moreover, the torus $\widetilde{\Sigma}_\tau$ also bifurcates into paths of symmetric Willmore tori. For this reason, we refer to $\widetilde{\Sigma}_\tau$ as the \emph{bifurcated brother}.

\medskip
On the one hand, it follows from Langer and Singer's stability analysis of elastica curves in the round $\Sphere^2$ that, when we restrict the functional $\willmore$ to the Hopf tori family, the Clifford torus $\Sigma_\tau$ is stable exactly when $0 < \tau \leq \sqrt{2}$, and $\widetilde{\Sigma}_\tau$ is stable for every $\tau>\sqrt{2}$. On the other hand, we computed the second variation of the Willmore energy (see \cref{second variation of surfaces in the Berger sphere}) and studied the index of $\Sigma_\tau$ and $\widetilde{\Sigma}_\tau$ as $\tau$ varies. Let $i_\tau(\Sigma)$ and $n_\tau(\Sigma)$ denote the index and the nullity of a Willmore surface $\Sigma \hookrightarrow \berger$, and let $\left\lfloor \cdot \right\rfloor$ denote the floor function.
\begin{maintheorem}[Stability of the Clifford torus]\label{theorem: Stability of the Clifford torus}
The Clifford torus \(\Sigma_\tau\) is stable as a Willmore surface in a Berger sphere if and only if \(0 < \tau \le 1\). Its index and nullity satisfy
\begin{equation*}
    i_\tau(\Sigma_\tau) \ge \left\lfloor \tau^2 \right\rfloor \text{ if }  \tau >1,
\quad \text{and} \quad n_\tau(\Sigma_\tau) = 
\begin{cases}
2, & \text{if } \tau \in (0,1) \cup \big((1,\infty) \setminus \mathcal{D}\big), \\
8, & \text{if } \tau = 1, \\
3, & \text{if } \tau= \sqrt{2},
\end{cases}
\end{equation*}
where $\mathcal{D}$ is an infinite countable subset of $(1,\infty) \setminus \{\sqrt{2}\} $. Moreover, $n_\tau(\Sigma_\tau)$ is an even integer and $n_\tau(\Sigma_\tau) \ge 4$ whenever $\tau \in \mathcal{D}$. 
\end{maintheorem}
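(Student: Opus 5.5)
The plan is to compute the Jacobi (second variation) operator of $\willmore$ at $\Sigma_\tau$ explicitly, using the formula of \cref{second variation of surfaces in the Berger sphere}, and then diagonalize it by Fourier analysis. The Clifford torus is homogeneous under the isometries of $\berger$ that preserve it, namely the $\T^2$ action $(z,w)\mapsto(e^{i\theta}z,e^{i\phi}w)$. Consequently its principal curvatures, the normal component of the Hopf field, and the ambient curvature terms along $\Sigma_\tau$ are all constant. The Jacobi operator acting on normal variations $u N$ is therefore a constant-coefficient fourth-order operator in the flat coordinates of $\Sigma_\tau$. It has the schematic form
\begin{equation*}
L_\tau u = \Delta^2 u + (\text{constant-coefficient second-order terms in } \partial_x,\partial_y) + c_\tau u .
\end{equation*}
The second-order part is not isotropic, because the Hopf direction is distinguished.

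First, I will fix a flat lattice $\Lambda_\tau$ describing the induced metric on $\Sigma_\tau$. It is natural to use coordinates along the Hopf fibre and along the horizontal lift of the equator. In these coordinates the Hopf fibre has length $2\pi\tau$ (up to normalization) and the horizontal direction has length comparable to $\pi$, and the lattice is in general not rectangular. Second, I will insert the Fourier modes $e^{i\langle k,\cdot\rangle}$, $k\in\Lambda_\tau^*$, into $L_\tau$. This gives eigenvalues $\lambda_\tau(m,n)$ that are explicit real polynomials in $(m,n)\in\Z^2$ with coefficients depending on $\tau$. Each eigenvalue appears with multiplicity given by the symmetry $k\mapsto -k$, and, depending on the lattice, possibly also by further symmetries such as those coming from $\swap$.

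With the eigenvalues in hand, the index and nullity become counting problems:
\begin{equation*}
i_\tau = \#\{k:\lambda_\tau(k)<0\}, \qquad n_\tau = \#\{k:\lambda_\tau(k)=0\}.
\end{equation*}
The modes coming from ambient Killing fields always lie in the kernel. Only the two $\T^2$ rotations act nontrivially on $\Sigma_\tau$, and their normal components give the two generic Jacobi fields, which accounts for $n_\tau=2$. At $\tau=1$ the full $SO(4)$ acts, and its $6$-dimensional algebra has $6$ non-tangential directions. Together with the conformal directions (the Clifford torus is a degenerate Willmore critical point in $\Sphere^3_1$), these should produce $n_1=8$; I will verify this directly from the formula for $\lambda_1$. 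At $\tau=\sqrt2$ I expect one extra zero mode coming from a mode constant along the fibres, namely the Hopf-torus direction deforming the equator toward $\widetilde\Sigma_\tau$ (cf.\ \cref{remark: trivial bifurcation at sqrt 2}). That mode has no $\pm$ partner, which gives $n=3$.

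For $\tau>1$ I will exhibit the modes with $m$ (the fibre frequency) fixed and $n=1,\dots,\lfloor\tau^2\rfloor$, or a similar family. For these $\lambda_\tau<0$, because a term like $n^2(n^2-\tau^2)$ dominates. Counting one mode per pair $\pm k$ gives $i_\tau\ge\lfloor\tau^2\rfloor$. For $\tau\le1$, stability amounts to showing that $\lambda_\tau(k)\ge0$ for all $k$. I will write $\lambda_\tau$ as a sum of squares plus a nonnegative remainder when $\tau\le1$.

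For the set $\mathcal D$, consider
\begin{equation*}
\mathcal D=\{\tau>1,\ \tau\neq\sqrt2 : \lambda_\tau(k)=0 \text{ for some } k \text{ other than the Killing modes}\}.
\end{equation*}
For each fixed $k$, the condition $\lambda_\tau(k)=0$ is a polynomial equation in $\tau^2$, so it has finitely many roots. Hence $\mathcal D$ is countable. It is infinite because for each $n$ the eigenvalue changes sign as $\tau$ crosses roughly $n$, by the index bound. Evenness of $n_\tau$ off $\sqrt2$ follows from the pairing $k\leftrightarrow -k$, since the unpaired constant-along-fibre modes vanish only at $\tau=\sqrt2$. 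Combined with the two Killing modes, this gives $n_\tau\ge4$ on $\mathcal D$.

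The main obstacle is the exact computation of the Jacobi operator and its symbol. In particular, the mixed Hopf-field terms make the symbol non-isotropic, and sign control for all lattice points is needed to prove stability for $\tau\le1$. I would first try to factor $\lambda_\tau(m,n)$ as a product of two quadratic forms in $(m,n)$, in analogy with the round case.
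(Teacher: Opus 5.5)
Your overall framework is the paper's: specialise the second variation to the flat homogeneous torus and diagonalise with Fourier modes. But the whole proof rests on the explicit eigenvalue formula, which you defer, and the concrete steps you do commit to would not work.

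\textbf{The eigenvalues.} With $H=C=0$, $T=\xi$ and $|A|^2=2\tau^2$ one gets $\mathcal{L}_{\Sigma_\tau}=-\Delta^2-2(4-\tau^2)\Delta+16(1-\tau^2)L_\xi^2-8(2-\tau^2)$, and $\Delta_\tau=\Delta_1+(1-\tau^2)L_\xi^2$. On $\cos(k\theta+l\varphi)$, write $p=k+l$ for the fibre frequency and $q=k-l$ for the horizontal frequency. The eigenvalue is then
\begin{equation*}
\lambda(p,q,\tau)=\frac{p^4}{\tau^4}+\frac{2p^2(q^2+4)}{\tau^2}+(q^2-4)^2-14p^2+2(q^2-4)\tau^2 .
\end{equation*}

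\textbf{Index bound.} Your family (fibre frequency fixed, horizontal frequency running up to $\lfloor\tau^2\rfloor$) goes in the wrong direction. The terms $q^4$ and $2q^2\tau^2$ enter positively, and $14p^2-p^4/\tau^4\le 49\tau^4$. Hence, uniformly in $p$, only $|q|=O(\tau)$ can give a negative eigenvalue, which is far fewer than $\tau^2$ for large $\tau$. Your own heuristic $n^2(n^2-\tau^2)$ would likewise produce only about $\tau$ negative modes. The paper instead uses modes that are constant horizontally and oscillate along the fibres, $\cos(m(\theta+\varphi))$, i.e.\ $q=0$, $p=2m$. For these, $\lambda^+_{m,m}(\tau)=16m^4/\tau^4+32m^2/\tau^2+16-56m^2-8\tau^2$ is strictly decreasing in $\tau$. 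It vanishes at $\tau=1$ when $m=1$, and equals $-40m^2+24m+16<0$ at $\tau=\sqrt m$ when $m\ge 2$. So the modes $m=1,\dots,\lfloor\tau^2\rfloor$ are all negative for $\tau>1$.

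\textbf{Stability for $\tau\le 1$.} This cannot come from writing $\lambda_\tau$ as a sum of squares plus a nonnegative remainder. At $\tau=1$ the formula reduces to $(s-2)(s-4)$ with $s=p^2+q^2$. That is negative for real $(p,q)$ with $2<s<4$, and by continuity the negativity persists for $\tau$ slightly below $1$. Positivity is arithmetic: $p,q$ are integers of equal parity, so $s=2(k^2+l^2)$ never lies in $(2,4)$. The low modes, for example $(\pm1,\pm1)$ and $(\pm2,0)$, which vanish at $\tau=1$, must then be checked by hand for $\tau<1$.

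\textbf{Nullity at $\sqrt2$.} Similarly, $n_{\sqrt2}=3$ needs a bound showing $\lambda(\cdot,\cdot,\sqrt2)>0$ for large frequencies, followed by a finite check of the remaining modes. You only anticipate this.

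\textbf{Kernel bookkeeping.} This is wrong, though not load-bearing. The $\T^2$ rotations preserve $\Sigma_\tau$, so their normal components vanish. The generic kernel, spanned by $\cos(\theta-\varphi)$ and $\sin(\theta-\varphi)$ (i.e.\ $p=0$, $q=\pm2$), comes from the two off-diagonal generators of $\mathfrak{su}(2)$. At $\tau=1$ only $4$ of the $6$ directions in $\mathfrak{so}(4)$ are non-tangential; together with the $4$ conformal fields these give $8$, whereas your count gives $10$.

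\textbf{What is fine.} The parity argument for evenness and for $n_\tau\ge 4$ on $\mathcal{D}$ is correct. So is countability via roots of $\tau^4\lambda$, since that polynomial is not identically zero off the Killing modes.
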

The set $\mathcal{D}$ is explicitly given by the roots of a countable set of polynomials of degree 6. These roots are precisely the zeros of the eigenvalues (viewed as functions of $\tau$) of the second variation operator $\mathcal{L}_{\Sigma_\tau}$ associated with $\Sigma_\tau$ as a Willmore surface.  
\begin{maintheorem}[Stability of the bifurcated brother]\label{theorem: instabilidade do irmao paralelo}
    The Willmore surface $\widetilde{\Sigma}_\tau$ is unstable for every $\tau>\sqrt{2}$. Moreover, its index tends to infinity as $\tau \nearrow +\infty$.
\end{maintheorem}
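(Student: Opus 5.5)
The strategy is to use test sections. $\widetilde{\Sigma}_\tau=\Sphere^1(a)\times\Sphere^1(b)$ is a flat torus, invariant under the torus action $(z,w)\mapsto(e^{i\theta}z,e^{i\phi}w)$, which acts by isometries of $\berger$ for every $\tau$. All of its geometric quantities are therefore constant: the second fundamental form, the mean curvature $H$, the ambient curvature terms in the normal direction, and the angle between the unit normal $N$ and the Hopf field $\xi$. I would apply the second variation formula (\cref{second variation of surfaces in the Berger sphere}) to normal variations $uN$. Since the coefficients are constant, the Jacobi-type operator $\mathcal{L}_{\widetilde{\Sigma}_\tau}$ is a fourth-order operator with constant coefficients on a flat torus. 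It is thus diagonalised by Fourier modes $u=e^{i(m\theta+n\phi)}$, and each eigenvalue is an explicit polynomial $P_\tau(m,n)$ of degree $4$ in $(m,n)$, with coefficients rational in $a,b,\tau$. The index is the number of lattice points $(m,n)\in\Z^2$ with $P_\tau(m,n)<0$, counted with multiplicity.

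The first step is to write $P_\tau$ explicitly. The ingredients are the Laplacian eigenvalues $m^2/a^2+n^2/b^2$, computed in the induced metric from $\berger$. Since this metric is not the product metric, the right lattice is the dual lattice of the induced flat metric, and mixed terms $mn$ appear. The remaining terms come from $|A|^2$, $H$, and $\ric(N,N)$, which in a Berger sphere depends on $\langle N,\xi\rangle^2$. Here $c(\tau)$ enters through the formulas for $a$ and $b$.

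For instability, the natural first test is the low modes, for instance $u$ constant or $(m,n)=(\pm1,\mp1)$-type modes along the Hopf direction. The Hopf-tori stability of Langer–Singer shows that variations \emph{horizontal to the fibres}, i.e. functions constant along Hopf fibres, do not destabilise $\widetilde{\Sigma}_\tau$. So the negative direction must vary along the fibres. I would look for modes whose tangential wave vector is transverse to the fibre direction, and check that $P_\tau<0$ there for all $\tau>\sqrt2$. As a sanity check, as $\tau\searrow\sqrt2$ this should match the index of $\Sigma_{\sqrt2}$, which is at least $\lfloor 2\rfloor=2$ by \cref{theorem: Stability of the Clifford torus}. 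This gives instability near $\sqrt2$, and continuity of the eigenvalues in $\tau$ plus explicit sign analysis would extend it to all $\tau$.

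For the index tending to infinity, I would examine the asymptotics as $\tau\to\infty$. Then $c\to 1/2$, so $b\to0$, and the fibre length $2\pi\tau$ grows. The fibre direction becomes long, so modes varying slowly along the fibres have small frequencies. Meanwhile a curvature term of order $\tau^2$ (from $\ric(N,N)$ or $|A|^2$, involving $\tau^2$ and $1/b^2$) should make the zeroth-order part of $P_\tau$ negative and large. Mimicking the bound $i_\tau(\Sigma_\tau)\ge\lfloor\tau^2\rfloor$, I would show that for all $k$ in a range of size growing with $\tau$, the mode with wave number $k$ along the fibre satisfies $P_\tau<0$. This means a quadratic-in-$k$ negative term dominates the quartic term for $|k|\lesssim \tau^{\alpha}$. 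The main obstacle is this explicit sign analysis. The coefficients depend on $\tau$ in a messy algebraic way through $c(\tau)$, and choosing the right family of lattice directions, mixing $m,n$ so that they align with the fibre in the induced metric, is delicate. My first attempt would use coordinates adapted to the Hopf fibration, namely a fibre coordinate together with the arclength of $\tilde\beta$, in which the lattice and $P_\tau$ simplify.
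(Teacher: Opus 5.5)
Your setup is the paper's: $\widetilde{\Sigma}_\tau$ is flat and $R_{\mu,\nu}$-invariant, and $C=0$, $\nabla H=0$. So $\mathcal{L}_{\widetilde{\Sigma}_\tau}$ has constant coefficients and no first-order part, and its eigenvalues are the explicit $\tilde\lambda^{\pm}_{m,n}(\tau)$ on $\cos(m\theta\pm n\varphi)$, $\sin(m\theta\pm n\varphi)$. However, the theorem \emph{is} the sign analysis you leave open, and your pointers for it lead to the wrong modes.

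For instability, both of your first candidates fail. The constant mode has $\tilde\lambda^{+}_{0,0}(\tau)=16(\tau^2-1)(\tau^2-2)>0$. The $(\pm1,\mp1)$ modes $\cos(\theta-\varphi)$, $\sin(\theta-\varphi)$ are constant on the Hopf fibres and satisfy $\tilde\lambda^{-}_{1,1}\equiv 0$: they are the normal parts of Killing fields of $U(2)$ that move the torus. Continuity from $\Sigma_{\sqrt2}$ only gives instability for $\tau$ near $\sqrt2$. Eigenvalues may cross zero later, so ``continuity plus explicit sign analysis'' is the whole proof, not a step of it. What is needed is one mode that works for every $\tau>\sqrt2$. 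The paper uses $\cos(\theta+\varphi)$, which is your $(1,1)$, i.e.\ $C^+_{1,1}$. Using $a^2+b^2=1$, $a^2-b^2=2c$, $a^2b^2=\frac{1}{4(\tau^2-1)}$ and $H=2\sqrt{\tau^2-2}$, its eigenvalue collapses to $\tilde\lambda^{+}_{1,1}(\tau)=-\frac{16}{\tau^4}(\tau^2-1)(2\tau^4+1)<0$. This closed form is the paper's entire instability argument.

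For the index, your proposed mechanism has the wrong sign. The zeroth-order term of $\mathcal{L}_{\widetilde{\Sigma}_\tau}$ contributes $+16(\tau^2-1)(\tau^2-2)\sim 16\tau^4$ to every eigenvalue. So a slowly varying mode that does not wind around the small circle $\Sphere^1(b)$, such as $\cos(m\theta)$ with $m$ bounded, has eigenvalue $\approx 16\tau^4>0$. A window $|k|\lesssim\tau^\alpha$ of fibre wave numbers does not by itself produce negative directions.

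The negativity comes from second-order terms, above all the anisotropic term $-2Hg(\hess(\cdot),A)$, whose coefficient along the small circle is $2Ha/b^3\approx 32\tau^4$. The paper takes the modes winding exactly once around $\Sphere^1(b)$, namely $\cos(m\theta+\varphi)$. For these, the order-$\tau^4$ contributions cancel: roughly $16\tau^4$ from the bi-Laplacian, $-32\tau^4$ from the $Ha/b^3$ term, and $+16\tau^4$ from the constant term. Expanding $a$, $b$, $H$ in powers of $\tau^{-1}$ then leaves $\tilde\lambda^{+}_{m,1}(\tau)=-16(m+1)\tau^2+O(1)$ for each fixed $m$. Hence any prescribed number of these is negative once $\tau$ is large.

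The choice of family is genuinely delicate. Winding $n\ge 2$ gives $\approx 16\tau^4(n^2-1)^2>0$ for bounded $m$. The opposite family $\cos(m\theta-\varphi)$, $m\ge 2$, gives $\approx 16(m-1)\tau^2>0$. Your fibre heuristic can be rescued, but in a different regime: $\cos(m\theta)$ becomes negative for $m\approx\kappa\tau^2$ with $\kappa$ roughly in $(1.16,2.76)$. There the term $16(1-\tau^2)L_\xi^2$ beats both the quartic term and the $+16\tau^4$ constant.

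Identifying such a family and carrying out the expansion is the missing content of your proposal.
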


In light of the above theorems, we may interpret the maps $\tau \mapsto \Sigma_\tau$ and $\tau \mapsto \widetilde{\Sigma}_\tau$ as paths of critical points of the Willmore functional along which the index changes. Bifurcation theory tells us to expect the existence of new critical points, that is, Willmore surfaces with the same topology emerging from $\Sigma_\tau$ and $\widetilde{\Sigma}_\tau$. Indeed, by applying the bifurcation from a simple eigenvalue theorem of Crandall--Rabinowitz \cite{CrandallRabinowitz1971}, together with the Maximal Torus Theorem, we prove bifurcation results, which we state below in a rather imprecise manner. See \cref{theorem: Bifurcations of the Clifford Torus,theorem: Bifurcations of the bifurcated brother} for the precise statements. (For more recent applications of Bifurcation Theory to Geometric Analysis see the non--exhaustive list: \cite{Bettiol&Piccione2016}, \cite{Bettiol&Piccione2025-2}, \cite{Bettiol&Piccione2025}, \cite{Lima&Lira2014}, \cite{Luis&Piccione2013}; for surveys, see \cite{Bettiol&Piccione2020}, \cite{Bettiol&Piccione2022}).

\medskip
Given $\mu, \nu \in \R$, define the maps 
\begin{equation*}
    R_{\mu,\nu}, \, \conj \colon \Sphere^3 \to \Sphere^3, \quad (z,w) \mapsto (e^{i\mu}z, e^{i\nu}w), \, (\overline{z},\overline{w}).
\end{equation*} 
These maps are isometries of $\berger$ for every $\tau>0$. We set 
\begin{equation*}
    K = \operatorname{span}\{R_{\theta,-\theta}, \conj : \theta \in \R\} \subset \iso(\berger).
\end{equation*}
\begin{maintheorem}[Bifurcations of the Clifford torus]
   There exists a sequence of Berger parameters $\{\tau_m\}_{m \in \N_0}$ and a tubular neighbourhood $V \subset \Sphere^3$ of the Clifford torus such that, for each $\tau_m$, there exists a smooth path of embedded Willmore surfaces $(-\delta_m,\delta_m) \ni t \mapsto X_{\tau_m(t)}^t \subset \Sphere^3_{\tau_m(t)}$, with $X_{\tau_m(t)}^t \subset V$, which are graphical over the Clifford torus, and such that 
   \begin{enumerate}
      \item[$(a)$] $\tau_0 = \sqrt{2}$, $\tau_1 = 1$, $\{\tau_m\}_{m \in \N}$ is strictly increasing for $m \geq1$, and $\tau_m \nearrow +\infty$.
      \vspace{0.2cm}
      \item[$(b)$] The surface $X_{\tau_m(0)}^0$ is the Clifford torus $\Sigma_{\tau_m}$, and $X_{\tau_m(t)}^t \neq \Sigma_{\tau_m}$ when $t \neq 0$.
      \vspace{0.2cm}
      \item[$(c)$] The group $K$ is a subgroup of the symmetries of $X_{\tau_m(t)}^t$.
      \vspace{0.2cm}
      \item[$(d)$] If $X \subset \Sphere^3_{\tau_m(t)}$ is a Willmore surface, with $X \subset V$, which is graphical over $\Sigma_{\tau_m}$, then $X=\Sigma_{\tau_m(t)}$ or $X=X_{\tau_m(s)}^s$ for some $s$ with $\tau_m(s) = \tau_m(t)$.
      \vspace{0.2cm}
      \item [$(e)$] The bifurcation at $\tau_0$ is a reparametrization of the bifurcated brother $\tau \mapsto \widetilde{\Sigma}_\tau$.
      \vspace{0.2cm}
      \item[$(f)$] The bifurcation at $\tau_1$ is given by a one-parameter family of tori in $\Sphere^3_1$ that are congruent\footnote{Two surfaces are said to be congruent when there exists an ambient isometry taking one to the other.} to the Clifford torus.
      \vspace{0.2cm}
      \item[$(g)$] For $m \geq 2$, the Willmore surface $X_{\tau_m(t)}^t$ is not congruent to any Hopf torus when $t \neq 0$.
   \end{enumerate}
\end{maintheorem}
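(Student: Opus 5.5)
We will apply the Crandall--Rabinowitz theorem on bifurcation from a simple eigenvalue to the Willmore operator restricted to $K$-invariant normal graphs over the Clifford torus. Identify surfaces near $\Sigma_\tau$ with functions $u$ on the torus via normal graphs. In $\berger$ the unit normal of $\Sigma_\tau$ depends on $\tau$, so we use a fixed transversal field instead, namely the one coming from the $\tau=1$ normal, rescaled as needed. Restricting to $K$-invariant functions is justified by the principle of symmetric criticality: $K$ acts by isometries of every $\berger$ and preserves $\Sigma_\tau$. We then consider the map
\begin{equation*}
F(\tau,u) = H_{\willmore}\big(\Sigma_\tau^u\big) \in C^{0,\alpha}_K,
\end{equation*}
defined on $(0,\infty)\times C^{4,\alpha}_K$. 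Here $H_{\willmore}$ is the Willmore Euler--Lagrange operator from \cref{euler-lagrange equation of willmore energy}, and the subscript $K$ denotes $K$-invariant functions. The map satisfies $F(\tau,0)=0$, and $\partial_u F(\tau,0)=\mathcal{L}_{\Sigma_\tau}$ is the fourth-order self-adjoint elliptic operator from \cref{second variation of surfaces in the Berger sphere}. Since this operator is Fredholm of index zero, the crossing condition is all that remains to check.

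The key step is spectral. On the flat Clifford torus, $\mathcal{L}_{\Sigma_\tau}$ has constant coefficients in suitable coordinates $(x,y)$. These coordinates are adapted to the two circle factors, or equivalently to the Hopf fibre and horizontal directions. The operator is therefore diagonalised by Fourier modes, with eigenvalues $\lambda_{p,q}(\tau)$ given by explicit rational expressions in $\tau^2$; their zeros are the roots of the degree-6 polynomials mentioned after \cref{theorem: Stability of the Clifford torus}. Invariance under $R_{\theta,-\theta}$ kills every dependence except along one direction, and $\conj$ selects cosines over sines. As a result, the $K$-invariant kernel at a degenerate $\tau$ is spanned by a single function $\cos(m\,\cdot)$, after checking that distinct modes $m$ do not vanish simultaneously. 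We define $\tau_m$ as the zero of $\lambda_m(\tau)$:
\begin{itemize}
\item $m=0$ should give $\tau_0=\sqrt2$, matching the Hopf/elastica direction;
\item $m=1$ should give $\tau_1=1$;
\item for $m\ge 2$ the zeros should be strictly increasing and tend to $\infty$, consistent with $i_\tau(\Sigma_\tau)\ge\lfloor\tau^2\rfloor$ from \cref{theorem: Stability of the Clifford torus}.
\end{itemize}
We then verify the transversality condition $\lambda_m'(\tau_m)\neq0$ directly from the explicit formula. Crandall--Rabinowitz then gives the smooth path $t\mapsto(\tau_m(t),u_t)$ with $u_t=t\cos(m\,\cdot)+o(t)$, which yields (b), (c), and the local uniqueness (d) within $K$-invariant graphs in a neighbourhood $V$. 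Embeddedness follows from graphicality for small $t$.

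The remaining items are identifications.
\begin{itemize}
\item For (e), the bifurcated brother $\widetilde{\Sigma}_\tau$ is $K$-invariant (note that $R_{\theta,-\theta}$ and $\conj$ preserve tori $|z|=a$) and is graphical over $\Sigma_\tau$ near $\sqrt2$. By the uniqueness in (d), it must coincide with the branch at $\tau_0$.
\item For (f), at $\tau=1$ the isometry group of $\Sphere^3_1$ is larger. Rotating the Clifford torus by a one-parameter family of $SO(4)$ elements commuting with $K$ produces $K$-invariant Willmore tori through $\Sigma_1$, so by uniqueness they make up the branch.
\item For (g), a Hopf torus is invariant under the fibre action $R_{\theta,\theta}$. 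Its graph function would then be constant along the fibre direction, whereas to first order $u_t$ has the nonconstant mode $\cos(m\,\cdot)$ in that direction. The same holds for congruent images, since congruences of $\berger$ with $\tau\ne1$ preserve the Hopf fibration.
\end{itemize}

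The main obstacle I expect is the simplicity and transversality of the $K$-invariant kernel. This requires showing from the explicit polynomial eigenvalue expressions that the relevant $\lambda_m$ has a simple zero and that no other $K$-invariant mode vanishes at the same $\tau_m$. It also requires handling the fact that the graph parametrisation and the normal depend on $\tau$, so that $F$ is smooth jointly in $\tau$.
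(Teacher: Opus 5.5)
Your proposal is correct. Its analytic core is the paper's: Crandall--Rabinowitz applied to $(\tau,f)\mapsto\mathcal{E}_\willmore(x_f)$ on $K$-invariant normal graphs. The $K$-invariant functions are the even functions of $\theta+\varphi$, so each mode $C^+_{m,m}=\cos(m(\theta+\varphi))$ is simple. Moreover $\lambda^+_{m,m}(\tau)=16m^4\tau^{-4}+32m^2\tau^{-2}+16-56m^2-8\tau^2$ is strictly decreasing, which gives both the unique zero $\tau_m$ and transversality.

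The joint-smoothness obstacle you anticipate does not arise. $N(z,w)=\tfrac{1}{\sqrt2}(-z,w)$ is round-orthogonal to $V$, so it is the $g_\tau$-unit normal for every $\tau$. The $g_\tau$-geodesics in direction $N$ are the round ones, so $x_f$ does not depend on $\tau$.

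The genuine divergence is item $(g)$.
\begin{itemize}
\item The paper's route: assume $\ima(x_{f_m(t)})=\psi(\hopftori)$; apply the Maximal Torus Theorem to $\operatorname{span}\{R_{\varphi,\varphi},\psi^{-1}R_{\theta,-\theta}\psi\}$ to conjugate $\hopftori$ to a torus over a parallel; use the classification of Willmore Hopf tori; exclude the bifurcated brother via $\tau_m(t)<\sqrt2$ for $m\in\{2,3\}$ and an energy comparison for $m\ge4$; finish with the rigidity of $\fix(\Sigma_\tau)$.
\item Your route: for $\tau\neq1$ every isometry lies in $U^+\cup U^-$ and permutes Hopf fibres, so $\ima(x_{f_m(t)})$ would itself be $R_{\varphi,\varphi}$-invariant. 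Since $R_{\varphi,\varphi}$ preserves $\Sigma_\tau$ and $N$, uniqueness of graph representations in the tube forces $f_m(t)$ to be constant along $\partial_\theta+\partial_\varphi$. This contradicts $f_m(t)=tC^+_{m,m}+o(t)$ in $C^{4,\alpha}$, because $C^+_{m,m}$ varies precisely along the fibres.
\end{itemize}
Your argument is shorter, uniform in $m\ge2$, and needs neither the Maximal Torus Theorem nor energy estimates. You should state explicitly that $\delta_m$ is shrunk so that $\tau_m(t)\neq1$ and the first-order term dominates. What the paper's route buys is that it never uses $f_m'(0)$, only $K$-invariance and global comparisons. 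Your observation that isometries normalize the fibre circle would also shortcut the paper's argument, since it gives invariance under the full torus $\{R_{\mu,\nu}\}$ without conjugating by $\zeta$.

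For $(e)$ and $(f)$ you supply the local-uniqueness identification that the paper leaves to its remarks. Note that half of the $\tau_0$ branch is $\swap(\widetilde{\Sigma}_\tau)$, the constant graph of the opposite sign.
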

We explicitly know the variational vector field of the variation $t \mapsto X_{\tau_m(t)}^t$ of $\Sigma_{\tau_m}$. It is given by an eigenfunction of the second variation operator $\mathcal{L}_{\Sigma_{\tau_m}}$ associated with $\Sigma_{\tau_m}$ as a Willmore surface (see \cref{Clifford torus second variation operator}). The bifurcation parameter $\tau_m$ is a root of the corresponding eigenvalue, as a function of $\tau$, for which we have a closed formula (see \cref{proposition: spectrum of the Clifford torus}). We also verify that there is no other bifurcation parameters. Similar remarks apply to the following bifurcation result for $\widetilde{\Sigma}_\tau$:
\begin{maintheorem}[Bifurcations of the bifurcated brother]
   For each $m \in \{2,3\}$, there exists a Berger parameter $\tilde{\tau}_m$, a tubular neighbourhood $\widetilde{V}_m \subset \Sphere^3$ of the bifurcated brother $\widetilde{\Sigma}_{\tilde{\tau}_m}$, and a smooth path of embedded Willmore surfaces $(-\tilde{\delta}_m, \tilde{\delta}_m) \ni t \mapsto X_{\tilde{\tau}_m(t)}^t \subset \Sphere^3_{\tilde{\tau}_m(t)}$, with $X_{\tilde{\tau}_m(t)}^t \subset \widetilde{V}_m$, which are graphical over $\widetilde{\Sigma}_{\tilde{\tau}_m}$, and such that
   \begin{enumerate}
      \item[$(a)$] The surface $X_{\tilde{\tau}_m(0)}^0$ is the bifurcated brother $\widetilde{\Sigma}_{\tilde{\tau}_m}$, and $X_{\tilde{\tau}_m(t)}^t \neq \widetilde{\Sigma}_{\tilde{\tau}_m}$ when $t \neq 0$.
      \vspace{0.2cm}
      \item[$(b)$] The group $K$ is a subgroup of the symmetries of $X_{\tilde{\tau}_m(t)}^t$.
      \vspace{0.2cm}
      \item[$(c)$] If $X \subset \Sphere^3_{\tilde{\tau}_m(t)}$ is a Willmore surface, with $X \subset \widetilde{V}_m$, which is graphical over $\widetilde{\Sigma}_{\tilde{\tau}_m}$, then $X=\widetilde{\Sigma}_{\tilde{\tau}_m(t)}$ or $X=X_{\tilde{\tau}_m(s)}^s$ for some $s$ with $\tilde{\tau}_m(s)=\tilde{\tau}_m(t)$.
      \vspace{0.2cm}
      \item[$(d)$] The Willmore surface $X_{\tilde{\tau}_m(t)}^t$ is not congruent to any Hopf torus when $t \neq 0$.
   \end{enumerate}
\end{maintheorem}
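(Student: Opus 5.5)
We follow the same scheme as for the Clifford torus: Crandall--Rabinowitz bifurcation from a simple eigenvalue, applied in a space of $K$-invariant graphs where the kernel of the Jacobi operator becomes one-dimensional. The setup is as follows. For $\tau$ near a candidate $\tilde\tau_m$, we parametrize surfaces near $\widetilde{\Sigma}_\tau$ as normal graphs $u\mapsto \Sigma_u$ over the fixed torus $\widetilde{\Sigma}_{\tilde\tau_m}$. We pull them back by a $\tau$-dependent diffeomorphism of $\Sphere^3$ that commutes with $K$ and carries $\widetilde{\Sigma}_{\tilde\tau_m}$ to $\widetilde{\Sigma}_\tau$; a radial rescaling of the $|z|,|w|$ moduli works, since $a(\tau)$ and $b(\tau)$ are smooth for $\tau>\sqrt2$. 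We then consider the Willmore operator
\[
F(\tau,u)=\text{(Euler--Lagrange expression of \cref{euler-lagrange equation of willmore energy})},
\]
viewed as a map $C^{4,\alpha}_K\to C^{0,\alpha}_K$ on $K$-invariant functions. This map is well defined because $K$ acts by isometries of every $\berger$ and preserves $\widetilde{\Sigma}_\tau$; the latter holds since $\widetilde\Sigma_\tau=\{|z|=a,|w|=b\}$ is invariant under $R_{\theta,-\theta}$ and $\conj$. By construction $F(\tau,0)=0$, and $\partial_uF(\tau,0)$ is the second variation operator $\mathcal L_{\widetilde\Sigma_\tau}$ of \cref{second variation of surfaces in the Berger sphere}, restricted to $K$-invariant functions. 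It is an elliptic fourth order self-adjoint operator, hence Fredholm of index zero.

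The spectral step comes first. Since $\widetilde\Sigma_\tau$ is a flat torus with constant principal curvatures in a homogeneous space, and it is invariant under the torus $T^2=\{R_{\mu,\nu}\}$, the operator $\mathcal L_{\widetilde\Sigma_\tau}$ has constant coefficients in flat coordinates $(s,t)$ adapted to the $T^2$-action. Its eigenfunctions are therefore Fourier modes, with eigenvalues $\lambda_{k,l}(\tau)$ given by explicit quartic expressions in $(k,l)$ whose coefficients are rational in $\tau$, $a(\tau)$ and $b(\tau)$. This is analogous to \cref{proposition: spectrum of the Clifford torus}, and we would compute it in the same way. Invariance under $R_{\theta,-\theta}$ kills every mode except those depending on a single combination, say $\cos(m\phi)$ and $\sin(m\phi)$. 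Invariance under $\conj$ then kills $\sin(m\phi)$. Hence the $K$-invariant spectrum is $\{\lambda_m(\tau)\}_{m\ge0}$, each eigenvalue with multiplicity one and eigenfunction $\cos(m\phi)$. We would then check directly from the formula that for $m=2,3$ the function $\lambda_m(\tau)$ has a simple zero $\tilde\tau_m>\sqrt2$, i.e.\ $\lambda_m(\tilde\tau_m)=0$ and $\lambda_m'(\tilde\tau_m)\ne0$, and that $\lambda_j(\tilde\tau_m)\neq0$ for $j\ne m$. The modes $m=0,1$ must be excluded: at $m=1$ there are Killing Jacobi fields that are not $K$-invariant, and the $m=0$ mode relates to the Hopf-torus direction, which we show never vanishes for $\tau>\sqrt2$ by the Langer--Singer stability within Hopf tori. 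The index growth in \cref{theorem: instabilidade do irmao paralelo} suggests that only finitely many, namely $m=2,3$, produce a clean crossing. I expect this step to be the main obstacle: showing with explicit algebra that these two eigenvalue curves cross zero transversally and that no other mode vanishes at the same parameter.

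Given these facts, Crandall--Rabinowitz applies. The kernel is spanned by $\cos(m\phi)$, the transversality condition $\partial_\tau\partial_uF(\tilde\tau_m,0)[\cos m\phi]\notin\operatorname{Im}\partial_uF$ reduces to $\lambda_m'(\tilde\tau_m)\ne0$ by self-adjointness, and the theorem yields a smooth curve $t\mapsto(\tilde\tau_m(t),u_t)$ with $u_t=t\cos(m\phi)+o(t)$. It also gives local uniqueness of the zero set, which is item $(c)$ once we observe that every Willmore graph in $\widetilde V_m$ invariant under $K$ lies in this chart; to cover non-invariant graphs as well, we use the Maximal Torus Theorem argument used for the Clifford torus. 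Item $(a)$ follows because $u_t\ne0$ for $t\ne0$. Item $(b)$ holds by construction, and embeddedness follows from smallness of the graph.

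For $(d)$, note that a Hopf torus is invariant under the Hopf $\Sphere^1$-action $R_{\theta,\theta}$, and on $\widetilde\Sigma_\tau$ its normal graphs are exactly the functions of the horizontal variable alone, i.e.\ modes depending only on the combination complementary to $\phi$. If $X^t$ were congruent to a Hopf torus, then for small $t$, after adjusting by an isometry near the identity, its graph function would lie in that class. This contradicts the leading term $t\cos(m\phi)$ with $m\ne0$, which is not invariant under Hopf rotations.
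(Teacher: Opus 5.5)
Your existence argument is the paper's: Crandall--Rabinowitz on $C^{4,\alpha}_K(\T^2)$, kernel spanned by $C^+_{m,m}=\cos\big(m(\theta+\varphi)\big)$, and transversality from $\frac{d}{d\tau}\tilde\lambda^+_{m,m}(\tilde\tau_m)\neq0$. Your radial diffeomorphism is equivalent to the paper's graphs $y^\tau_f$ over the moving torus $\widetilde\Sigma_\tau$.

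Your heuristics for discarding $m=0,1$ are off, though. The Killing Jacobi fields of $\widetilde\Sigma_\tau$ are $\cos(\theta-\varphi)$ and $\sin(\theta-\varphi)$ (indeed $\tilde\lambda^-_{1,1}\equiv0$), and these are not $K$-invariant at all. Stability within Hopf tori gives a sign, not nondegeneracy. What is actually needed is that $\tilde\lambda^+_{0,0}=16(\tau^2-1)(\tau^2-2)$ and $\tilde\lambda^+_{1,1}=-16\tau^{-4}(\tau^2-1)(2\tau^4+1)$ never vanish for $\tau>\sqrt2$, together with the sign analysis of the cubics $P_m$ for $m\ge2$.

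The genuine error is in (c). Uniqueness cannot be extended to non-$K$-invariant graphs, by the Maximal Torus Theorem or otherwise, because it is false there. Take $\rho\in U(2)$ close to the identity but not diagonal. Then $\rho(\widetilde\Sigma_\tau)$ is a Willmore torus in $\berger$, lies in $\widetilde V_m$, and is graphical over $\widetilde\Sigma_{\tilde\tau_m}$; its graph function has leading term in the Killing modes just mentioned. Yet it is neither $\widetilde\Sigma_\tau$ nor a branch surface, since it is not invariant under $\{R_{\theta,-\theta}\}$: for such $\rho$, $\rho^{-1}R_{\theta,-\theta}\rho\notin\fix(\widetilde\Sigma_\tau)$ for some $\theta$. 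The Maximal Torus argument takes $K$-invariance of the surface as its input, so it cannot help here.

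Item (c) has to be read as in the precise \cref{theorem: Bifurcations of the bifurcated brother}(a): uniqueness among $K$-invariant $f$ with $(\tau,f)$ in the $C^{4,\alpha}$-neighbourhood $\mathcal{U}_m$ given by Crandall--Rabinowitz. Your assertion that every $K$-invariant Willmore graph in the $C^0$-tube $\widetilde V_m$ falls into that chart would itself need an a priori estimate that you do not supply.

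In (d), the step ``after adjusting by an isometry near the identity, its graph function would lie in that class'' is unjustified. The isometry $\psi$ with $X^t=\psi(\Sigma^\beta)$ is arbitrary, and moving $X^t$ by an isometry changes its graph function, so the leading term is no longer $t\cos(m\phi)$. Your argument also never uses $\tau\neq1$, and it has to. The same reasoning, run at $\tau=1$ on the $m=1$ Clifford branch (leading term $\cos(\theta+\varphi)$), would show those tori are not congruent to a Hopf torus, whereas they are rotated Clifford tori (\cref{remark: trivial bifurcation at 1}).

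The missing fact is that for $\tau\neq1$ every element of $\iso(\berger)=U(2)\cup\conj\circ U(2)$ normalizes the Hopf action $\{R_{\theta,\theta}\}$. That action is the centre of $U(2)$, and $\conj\circ R_{\theta,\theta}\circ\conj=R_{-\theta,-\theta}$. So an isometric image of a Hopf torus is itself a Hopf torus, and no adjustment is needed. Then $X^t$ is $R_{\theta,\theta}$-invariant. Since $R_{\theta,\theta}$ preserves $\widetilde\Sigma_\tau$ and its normal and commutes with your radial diffeomorphism, uniqueness of the graph representation makes $u_t$ a function of $\theta-\varphi$. Hence $u_t$ is $L^2$-orthogonal to $C^+_{m,m}$, contradicting $u_t=tC^+_{m,m}+o(t)$ for small $t\neq0$.

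Repaired this way, your (d) is a genuinely shorter route than the paper's. The paper uses the Maximal Torus Theorem to make the Hopf torus invariant under all $R_{\mu,\nu}$ and identifies it through the classification of Willmore Hopf tori over parallel circles. It then excludes $\Sigma_\tau$ by a Willmore-energy comparison and $\swap(\widetilde\Sigma_\tau)$ by the tubular-neighbourhood property (P2).
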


Although the index of $\widetilde{\Sigma}_\tau$ grows arbitrarily with $\tau$, exactly two bifurcations occur in the above theorem. This is because the variations of $\widetilde{\Sigma}_\tau$ showing that the index diverges do not occur within the family of $K$--invariant tori.

\medskip
This paper is organized as follows. In \cref{section: conventions and notations}, we fix some notations and conventions. In \cref{section: First Variation}, we compute the first variation of the Willmore functional for surfaces. In \cref{section: first and second variation on Bergers}, particularizing the study of the Willmore energy to Berger spheres, we present a new explicit example of a Willmore 2--sphere (\cref{ex: equador}), prove a non--existence result of CMC Willmore surfaces (\cref{Prop: Non-existence of CMC Willmore surfaces}), and compute the second variation for Willmore surfaces. In \cref{section: hopf tori}, we study the family of Hopf tori in a Berger sphere. In particular, we compute the spectrum of the second variation operators for the Clifford torus and the bifurcated brother as Willmore surfaces. In \cref{section: Bifurcations of the Clifford torus}, we establish bifurcation results. In \cref{section: Final Remarks}, we present our final remarks. In the Appendices \ref{app:aux} and \ref{app: lemma's proofs} we state the Crandall--Rabinowitz bifurcation from a simple eigenvalue theorem and prove auxiliary lemmas, respectively.

\medskip
\textbf{Acknowledgements}. I would like to express my deep gratitude to Professor Lucas Ambrozio for his guidance during the preparation of this work, which is part of my ongoing PhD thesis. I am grateful to Professor Renato Bettiol for introducing me to bifurcation theory and for sharing insightful ideas. I also thank my colleague Ivan Miranda for his interest in this work and for stimulating conversations. This research was supported by CNPq -- Conselho Nacional de Desenvolvimento Científico e Tecnológico.

 \section{Conventions and notations}\label{section: conventions and notations}

Let $(M^3,g)$ be an ambient Riemannian manifold, $\Sigma \hookrightarrow M$ be an isometrically immersed Riemannian surface, with Riemannian metric also denoted by $g$, and assume there exists $N \in \campos^\perp(\Sigma)$ a unit normal vector field along $\Sigma$. Let $\nablaAmbient, \overline{R}, \overline{\hess}$, ... correspond to $M$ and $\nabla, R, \hess$, ... correspond to $\Sigma$. From now on, we adopt the following conventions.

\medskip
The Riemann curvature tensor is defined by
\begin{align*}
\overline{R}(X,Y)Z &= \nablaAmbient_X \nablaAmbient_Y Z - \nablaAmbient_Y \nablaAmbient_X Z - \nablaAmbient_{[X,Y]} Z, \\
\overline{R}(X,Y,W,Z) &= g(\overline{R}(X,Y)W,Z).
\end{align*}
The Ricci tensor is defined by
\begin{equation*}
    \overline{\ric}(X,Y) = \tr \overline{R}(X, \cdot,\cdot,Y).
\end{equation*}
The scalar curvature is 
\begin{equation*}
    \overline{\scal} = \tr \overline{\ric}.
\end{equation*}
The second fundamental form of $\Sigma$ is given by
\begin{align*}
S(X) &= \nablaAmbient_X N, \\
A(X,Y) &= \langle \nablaAmbient_X N, Y \rangle.
\end{align*}
The mean curvature is defined as
\begin{equation*}
H = \tr A. 
\end{equation*}
Let $f$ be a function on $M$. The Laplace–Beltrami operator is
\begin{align*}
  \overline{\Delta} f = \overline{\diverg} \ \overline{\nabla} f = \tr ( X \mapsto \overline{\nabla}_X \overline{\nabla} f).
\end{align*}
The Jacobi operator associated to $\Sigma$ is given by
\begin{equation*}
    J_\Sigma = \Delta_\Sigma + \left( |A|^2 + \ric_M(N,N) \right).
\end{equation*}
When necessary, we use subscripts to indicate the manifold to which the operator, tensor, etc belongs.
 \section{First variation of the Willmore energy}\label{section: First Variation}
Let $F_0\colon \Sigma^2 \hookrightarrow M^3$ be a compact two-sided isometrically immersed Riemannian surface and denote by $g$ the Riemannian metric of $M$. We define the Willmore energy of $\Sigma$ \footnote{We actually mean the Willmore energy of $F_0$ and should write $\willmore(F_0)$ to be precise.} as
\begin{equation*}
    \willmore(\Sigma) = \int_\Sigma |\traceless{A}|^2 \, d\Sigma,
\end{equation*}
where $\traceless{A} = A - \frac{1}{2} H g$ is the traceless second fundamental form, and $d\Sigma$ denotes the Riemannian volume element of $\Sigma$. Let $F \colon \Sigma \times I \to M$ be a smooth variation of $F_0$, that is, $F_0 = F(\cdot,0)$ and each $F_t = F(\cdot,t)$ is an immersion. Let $\Sigma_t$ be $\Sigma$ equipped with the metric induced by $F_t$, which we also denote by g. Let $x^1, \ x^2$ be local coordinates on $\Sigma$ and denote
\begin{equation*}
    \partial_i = \frac{\partial F}{\partial x^i}, \quad \partial_t = \frac{\partial F}{\partial t}.
\end{equation*}
Let $N_t$ be a unit normal vector field along $F_t$. Suppose that the variational vector field is normal at for all $t$, that is,
\begin{equation*}
    \partial_t = f_t N_t,
\end{equation*}
where $f_t \colon \Sigma \to \R$ is an arbitrary smooth function. We use the simplified notations $\Sigma = \Sigma_t$, $N=N_t$, $f=f_t$, $g=g_t$, $A=A_t$, $\dots$ Recall that
\begin{align}
\partial_t g_{ij} &= 2fA_{ij}, \notag \\
\partial_t g^{ij} &= -2f g^{ip} g^{jq} A_{pq}, \label{variation of inverse of the metric} \\
\partial_t\sqrt{\det g} &= \diverg(\partial_t)\sqrt{\det g} =fH\sqrt{\det g}, \label{variation of sqrt det g} \\
\partial_t \ d\Sigma &= \diverg(\partial_t) \ d\Sigma = fH \ d\Sigma, \label{variation of volume element}
\end{align} 
Also,
\begin{equation}
    |\traceless{A}|^2 = |A|^2-\frac{1}{2}H^2. \label{Squared Norm of Traceless secondff}
\end{equation}
Define the function  
\begin{equation*}
    \mathcal{E}_\willmore(\Sigma) = -\Delta H - |\traceless{A}|^2 H + 3\overline{\ric}(N,N) H -2g(\overline{R}_N + \overline{\ric}, A) - g(\nablaAmbient \ \overline{\scal}, N)+ 2 (\nablaAmbient_N \overline{\ric})(N,N),
\end{equation*}
where $\overline{R}_N = \overline{R}(N,\cdot,\cdot,N)$. 
\begin{theorem} \label{euler-lagrange equation of willmore energy}
The first derivative of the Willmore energy under the variation F is given by 
  \begin{equation*}
      \left.\frac{d}{dt}\right|_{t=0} \willmore(\Sigma_t) = \int_\Sigma \mathcal{E}_{\willmore}(\Sigma)f \, d\Sigma.
  \end{equation*}
  In particular, the surface $\Sigma$ is Willmore if and only if the Euler--Lagrange equation
  \begin{align*}
     \mathcal{E}_\willmore(\Sigma) = 0
 \end{align*}
 is satisfied.
\end{theorem}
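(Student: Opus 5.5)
Since $|\traceless{A}|^2 = |A|^2 - \tfrac12 H^2$ by \eqref{Squared Norm of Traceless secondff}, and $\partial_t\, d\Sigma = fH\, d\Sigma$ by \eqref{variation of volume element}, the plan is to write
\begin{equation*}
\frac{d}{dt}\Big|_{t=0}\willmore(\Sigma_t) = \int_\Sigma \Big( \partial_t |A|^2 - H\,\partial_t H + |\traceless{A}|^2 fH \Big)\, d\Sigma ,
\end{equation*}
and reduce everything to the evolution of $A_{ij}$ in local coordinates.

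\textbf{Evolution of the second fundamental form.} With our convention $A_{ij} = \langle \nablaAmbient_i N, \partial_j\rangle$ and $\partial_t = fN$, I will use $\nablaAmbient_t N = -\nabla f$ and the symmetry $\nablaAmbient_t \partial_j = \nablaAmbient_j \partial_t$. I expect
\begin{equation*}
\partial_t A_{ij} = -\hess f_{ij} + f\, A_{ik}A^{k}{}_{j} - f\,\overline{R}(\partial_i, N, N, \partial_j),
\end{equation*}
up to the sign conventions fixed in \cref{section: conventions and notations}; these signs must be checked carefully. Combining this with \eqref{variation of inverse of the metric} gives
\begin{align*}
\partial_t H &= -\Delta f - f|A|^2 - f\,\overline{\ric}(N,N),\\
\partial_t |A|^2 &= -2\langle A, \hess f\rangle - 2f\,\tr A^3 - 2f\,\langle \overline{R}_N, A\rangle .
\end{align*}
In dimension two, Cayley--Hamilton gives $\tr A^3 = \tfrac32 H|A|^2 - \tfrac12 H^3$, which lets the cubic terms collapse to a multiple of $|\traceless{A}|^2 H$.

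\textbf{Integration by parts.} The term $H\Delta f$ integrates directly to $f\Delta H$. For $\int \langle A,\hess f\rangle$, I integrate by parts twice to get $\int f\, \diverg\diverg A$. The Codazzi equation gives $\diverg A = dH + \overline{\ric}(N,\cdot)^{\top}$, with sign again per convention. A second divergence then gives $\Delta H + \diverg\big(\overline{\ric}(N,\cdot)^\top\big)$.

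\textbf{Main obstacle.} The hard part is rewriting $\diverg_\Sigma\big(\overline{\ric}(N,\cdot)^\top\big)$ in terms of ambient quantities. Expanding gives
\begin{equation*}
\sum_i (\nablaAmbient_{e_i}\overline{\ric})(N,e_i) + \langle \overline{\ric}, A\rangle - H\,\overline{\ric}(N,N),
\end{equation*}
obtained by differentiating both $N$ and the tangential projection. The tangential trace of $\nablaAmbient\,\overline{\ric}$ is then handled by the contracted second Bianchi identity $\overline{\diverg}\,\overline{\ric} = \tfrac12 d\,\overline{\scal}$:
\begin{equation*}
\sum_i (\nablaAmbient_{e_i}\overline{\ric})(N,e_i) = \tfrac12 \langle\nablaAmbient\,\overline{\scal}, N\rangle - (\nablaAmbient_N\overline{\ric})(N,N).
\end{equation*}
This is the source of the $\overline{\scal}$ and $(\nablaAmbient_N \overline{\ric})(N,N)$ terms, with the factor $2$ coming from the coefficient $-2\langle A,\hess f\rangle$.

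\textbf{Collecting terms.} Finally I will collect the terms, checking that
\begin{itemize}
\item the $\overline{\ric}(N,N)H$ contributions (from $-H\partial_t H$ and from the divergence) sum to $3\overline{\ric}(N,N)H$;
\item the $\langle\overline{R}_N,A\rangle$ and $\langle\overline{\ric},A\rangle$ pieces give $-2\langle\overline{R}_N+\overline{\ric},A\rangle$;
\item the cubic terms plus $|\traceless{A}|^2 fH$ give $-|\traceless{A}|^2H$.
\end{itemize}
This proves the formula. Since $f$ is arbitrary, $\Sigma$ is Willmore if and only if $\mathcal{E}_\willmore(\Sigma)=0$, by the fundamental lemma of the calculus of variations. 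Tangential variations only reparametrize and do not change $\willmore$, so restricting to normal variations is justified.
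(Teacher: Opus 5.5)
Your proposal is correct and follows the paper's proof essentially step by step. The shared steps are the evolution of $A_{ij}$, $\partial_t H=-J_\Sigma(f)$, the double integration by parts of $\langle A,\hess f\rangle$, Cayley--Hamilton for $\tr(A^3)$, Codazzi giving $\diverg A=dH+\overline{\ric}(N,\cdot)$, and the contracted Bianchi identity for its divergence. The signs you flagged for checking, in $\partial_t A_{ij}$ and in the Codazzi step, come out exactly as you wrote them under the paper's conventions.
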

Sometimes it is convenient to regard $\mathcal{E}_\willmore(\Sigma)$ as the first variation operator of the Willmore energy of $\Sigma$, acting on functions $\Sigma \to \R$ via multiplication.
\begin{proof}
From \cref{variation of volume element,Squared Norm of Traceless secondff} we obtain that
\begin{align}
\frac{d}{dt} \willmore(\Sigma_t)
&= \int_\Sigma \partial_t \left( |\traceless{A}|^2 \, d\Sigma \right) \notag \\
&= \int_\Sigma \Big(\partial_t \big( |A|^2 - \frac{1}{2}H^2 \big) \, d\Sigma 
    + |\traceless{A}|^2 \, \partial_t(d\Sigma) \Big) \notag \\
&= \int_\Sigma (\partial_t |A|^2 - H \partial_t H ) d\Sigma 
    + |\traceless{A}|^2 f H \, d\Sigma, \label{variation of willmore energy incomplete 1}
\end{align}
In coordinates, $|A|^2=A_{ij}A_{kl}g^{ik}g^{jl}$. To compute $\partial_t|A|^2$ and $\partial_tH$, notice that 
\begin{align}
    \partial_t A_{ij} &= g(\nablaAmbient_{\partial_t} \nablaAmbient_{\partial_i} N,\partial_j) + g(\nablaAmbient_{\partial_i} N,\nablaAmbient_{\partial_t} \partial_j) \notag \\
    &= g(\overline{R}(\partial_t,\partial_i)N + \nablaAmbient_{\partial_i} \nablaAmbient_{\partial_t} N, \partial_j) + g(\nablaAmbient_{\partial_i} N,\nablaAmbient_{\partial_j} \partial_t) \notag \\
    &= g(\nablaAmbient_{\partial_i} \nablaAmbient_{\partial_t} N, \partial_j) - f \overline{R}(N,\partial_i,\partial_j,N) + f g(S(\partial_i),  S(\partial_j)). \label{variation of A_ij incomplete}
\end{align}
We claim that 
\begin{align}
    \nablaAmbient_{\partial_t}N= - \nabla f. \label{grad f}
\end{align} 
Indeed, since the variation is normal for all $t$,
\begin{align*}
    g(\nablaAmbient_{\partial_t} N, \partial_i)=-g(N,\nablaAmbient_{\partial_i} \partial_t)=-\partial_i g(N,\partial_t)=g(-\nabla f,\partial_i). 
\end{align*}
Define $\overline{R}_N=\overline{R}(N,\cdot,\cdot,N)$. By \cref{variation of A_ij incomplete,grad f},
\begin{align}
    \partial_t A_{ij} &= -(\hess f)_{ij} - f(\overline{R}_N)_{ij} + f(A^2)_{ij}. \label{variation of A_ij complete}
\end{align}
By \cref{variation of inverse of the metric,variation of A_ij complete},
\begin{align}
    \partial_t|A|^2 &= 2\partial_t A_{ij}A_{kl}g^{ik}g^{jl}+2A_{ij}A_{kl} \partial_t g^{ik}g^{jl} \notag \\ 
    &= 2(-g(\hess f,A)-f g(\overline{R}_N,A) + f \, \tr(A^3)) - 4f \, \tr(A^3) \notag \\
    &= -2(g(\hess f,A)+ f g(\overline{R}_N,A) + f \, \tr(A^3)), \label{variation of squared 2ff}
\end{align}
and 
\begin{equation}
    \partial_t H = - J_{\Sigma}(f), \label{variation of H} 
\end{equation}
where $J_{\Sigma} = \Delta_{\Sigma} + \left( |A|^2 + \overline{\ric}(N,N) \right)$ is the Jacobi operator of $\Sigma$.

\medskip
Observe that if $(L,\ip{\cdot}{\cdot})$ is a compact Riemannian manifold, $h \in C^\infty(L)$ is a smooth function, and $B$ is a $2$--tensor, then the following integration by parts holds:
\begin{equation*}
    \int_L \ip{\hess h}{B} dL = \int_L h \diverg^2(B) dL,
\end{equation*}
where $\diverg^2 = \diverg \circ \diverg$.
By eqs. (\ref{variation of willmore energy incomplete 1}), (\ref{variation of squared 2ff}), (\ref{variation of H}), and the above observation,
\begin{align*}
    \frac{d}{dt} \willmore(\Sigma_t)
&= \int_\Sigma f\Big( -2\big( g(\overline{R}_N,A)+\tr(A^3) \big) + |\traceless A|^2 H \Big) - 2 g(\hess f,A) + H J_{\Sigma}(f) \, d\Sigma \\
&= \int_{\Sigma} f\Big(J_{\Sigma}(H) - 2 \diverg^2(A) - 2\big(g(\overline{R}_N,A) + \tr(A^3) \big) + |\traceless A|^2 H \Big) \, d\Sigma.
\end{align*}
Provisionally, denote the first variation operator by $\mathcal{I}_\willmore(\Sigma)$. From the above equation, we obtain
\begin{equation*}
   \mathcal{I}_\willmore(\Sigma)= J_{\Sigma}(H) - 2 \diverg^2(A) - 2\big(g(\overline{R}_N,A) + \tr(A^3) \big) + |\traceless A|^2 H.
\end{equation*}
By the algebraic identity $2\tr(A^3)=H(|A|^2+2|\traceless A|^2)$ (which comes from the Cayley-Hamilton Theorem), we have that
\begin{align}
    \mathcal{I}_\willmore(\Sigma)=\Delta H + \overline{\ric}(N,N)H - 2\big( \diverg^2(A) + g(\overline{R}_N,A)\big) - |\traceless A|^2 H \label{euler-lagrange os surface incomplete 1}.
\end{align}
From the Gauss-Codazzi equation, for any $X \in \campos(\Sigma),$
\begin{align*}
    (\diverg A) (X) &= g^{ij}(\nabla_{\partial_i}A)(X,\partial_j)\\
    &=g^{ij}\big(  (\nabla_X A)(\partial_i,\partial_j) - \overline{R}(\partial_i,X,\partial_j,N)  \big) \\
    &= \tr(\nabla_X A) + \tr \overline{R}(X, \cdot, \cdot, N) \\
    &=dH(X) + T(X),
\end{align*}
where $T = \overline{\ric}(N,\cdot)$. Hence,
\begin{align}
    \diverg^2(A) = \Delta H + \diverg T. \label{square divergence of A 1}
\end{align}
Using the contracted second Bianchi identity,
\begin{align}
    \diverg(T) &= g^{ij}(\nabla_{\partial_i} T)(\partial_j) \notag \\
               &= g^{ij}\big(\partial_i(\overline{\ric}(N,\partial_j))-\overline{\ric}(N,\nabla_{\partial_i} \partial_j)  \big) \notag \\
               &= g^{ij} \big((\nablaAmbient_{\partial_i} \overline{\ric} )(N,\partial_j) + \overline{\ric}(\nablaAmbient_{\partial_i} N, \partial_j) + \overline{\ric}(N, \nablaAmbient_{\partial_i} \partial_j - \nabla_{\partial_i} \partial_j)  \big) \notag \\
               &= \diverg(\overline{\ric})(N) - (\nablaAmbient_N \overline{\ric})(N,N) + g(\overline{\ric}, A)- H \overline{\ric}(N,N) \notag \\
               &= \frac{1}{2}g(\nablaAmbient scal, N) - (\nablaAmbient_N \overline{\ric})(N,N) + g(\overline{\ric}, A)- H \overline{\ric}(N,N).
               \label{divergence of T}
\end{align}
Also, observe that 
\begin{align}
    g(\overline{\ric} + \overline{R}_N,A) = g(\overline{\ric} + \overline{R}_N,\traceless A) + \frac{H}{2} \, \overline{scal}. \label{algebraic identity}
\end{align}
The result follows from \cref{euler-lagrange os surface incomplete 1,square divergence of A 1,divergence of T,algebraic identity}.

\end{proof}
 \section{First and second variation in Berger spheres}\label{section: first and second variation on Bergers}

Let $\Sphere^n(r)$ denote the $n$-dimensional round sphere of radius $r$, centred at the origin, and denote $\Sphere^n = \Sphere^n(1)$. A \emph{Berger sphere} $\berger$ is a 3-sphere $\Sphere^3 \subset \C^2$ equipped with the Riemannian metric $g_\tau$ defined by
\begin{equation}\label{eq: metrica de berger}
   g_\tau(X,Y) = \ip{X}{Y} + (\tau^2 - 1)\ip{X}{V} \ip{Y}{V},    
\end{equation}

where $\ip{\cdot}{\cdot}$ is the canonical round metric on $\Sphere^3$, $V(z,w) = (iz, iw)$, and $\tau>0$. The Levi-Civita connection of $\berger$ is given by
\begin{equation}\label{eq: conexão da Esfera de Berger}
\overline{\nabla}_X Y = \nabla^{can}_X Y + (\tau^2 - 1)\left(\ip{X}{V}(iY)^\top + \ip{Y}{V}(iX)^\top\right),
\end{equation}
where $\nabla^{can}$ is the Levi-Civita connection of $(\Sphere^3, \ip{\cdot}{\cdot})$ and $(\cdot)^\top$ denotes the orthogonal projection onto the tangent bundle $T\Sphere^3$. The \emph{Hopf fibration} $\pi \colon \Sphere^3 \to \Sphere^2\left(\frac{1}{2}\right)$ is defined by
\begin{equation}\label{eq: Hopf submersion}
    \pi(z,w) = \left(z\overline{w}, \frac{1}{2}(|z|^2 - |w|^2)\right).
\end{equation}
\noindent
Henceforth, we will denote $g=g_\tau$. The Riemann tensor and the Ricci tensor of $\berger$ are given by
\begin{align}\label{Riemann tensor of the Berger sphere}
    \overline{R}(X,Y)Z 
    &= (4-3\tau^2)(g(Y,Z)X - g(X,Z)Y) \notag \\
    & +4(1-\tau^2)(g(Z,\xi)g(X,\xi)Y - g(Y,\xi)X) \\
    & +4(1-\tau^2)(g(X,Z)g(Y,\xi) - g(Y,Z)g(X,\xi))\xi, \notag 
\end{align}
\begin{align}\label{Ricci tensor of the berger Sphere}
    \overline{\ric}(X,Y)= 2(2-\tau^2)g(X,Y) - 4(1-\tau^2)g(X,\xi) g(Y,\xi), 
\end{align}
where $\xi = \frac{1}{\tau} V$ is a unit Killing vector field of $\berger$, tangent to Hopf fibres. It has the property that, for any $X \in \campos(\berger)$,
\begin{align}
    \nablaAmbient_X \xi = \tau X \wedge \xi, \label{xi property}
\end{align}
where $\wedge$ is a vector product defined in the following way: let $dV_\tau$ be the volume element of $\berger$. For any $X,Y \in \campos(\berger)$, the vector field $X \wedge Y$ is defined by the property
\begin{equation*}
    \forall Z \in \campos(\berger), \quad g_\tau(X \wedge Y, Z)=dV_\tau(X,Y,Z).
\end{equation*}
When $\tau\ne 1$, the isometry group of $\berger$ is given by 
\begin{equation*}
    \iso(\berger) = U^+ \cup U^-,
\end{equation*} 
where $U^+$ is the unitary group $U(2)$, and $U^-=\{ \conj \circ \ \psi : \psi \in U^+\}$ with $\conj(z,w) = (\overline{z},\overline{w})$ denoting the complex conjugation map. Note that $\conj$ reverses the orientation of both the Hopf fibres and the horizontal distribution defined by $\pi$. Consequently, all isometries of $\berger$ preserves orientation whenever $\tau\ne 1$.

\medskip
Let $F\colon \Sigma \hookrightarrow \berger$ be an isometrically immersed oriented surface and $N$ be a positive unit normal vector field along $\Sigma$ and $A$ the second fundamental form corresponding to $N$. The function
\begin{equation*}
    C = g(\xi,N),
\end{equation*}
naturally attached to $\Sigma$, that measures the cosine of the angle between $\xi$ and $N$, is called the \emph{angle function}. Let 
\begin{equation*}
    T = \xi-CN,
\end{equation*}
be the tangential component of $\xi$ in $T\Sigma$. Denote by $J$ the positively oriented rotation by $\frac{\pi}{2}$ on $T\Sigma$, that is, for any $X \in \campos(\Sigma)$, 
\begin{equation*}
    J(X) = N \wedge X.
\end{equation*}
We refer the reader to \cite{Daniel2007} for these and further geometric properties of Berger spheres and 3-homogeneous geometries.

\medskip
From \cref{euler-lagrange equation of willmore energy}, we recover the Euler--Lagrange equation for Willmore surfaces in $\berger$, as in \cite{Fabio&Alma2022} (see Proposition 2).
\begin{corollary}\label{euler-lagrange equation of willmore surfaces in the Berger sphere}
    Let $\Sigma \hookrightarrow \berger$ be an oriented compact isometric immersed surface. Then, the first variation operator of the Willmore energy is
    \begin{equation*}
        \mathcal{E}_\willmore(\Sigma) = -\Delta H - \big(|\traceless A|^2+4(1-\tau^2)(1+C^2)\big)H+16(1-\tau^2)A(T,T).
    \end{equation*}
\end{corollary}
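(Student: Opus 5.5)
The plan is to specialize the general first variation operator $\mathcal{E}_\willmore(\Sigma)$ of \cref{euler-lagrange equation of willmore energy} to $\berger$. I will compute each curvature term from \eqref{Riemann tensor of the Berger sphere}, \eqref{Ricci tensor of the berger Sphere} and \eqref{xi property}, then collect the coefficients of $H$ and of $A(T,T)$. Throughout I read the middle line of \eqref{Riemann tensor of the Berger sphere} as $4(1-\tau^2)g(Z,\xi)\big(g(X,\xi)Y-g(Y,\xi)X\big)$. I write $\xi = T + CN$ on $\Sigma$, so that $|T|^2 = 1-C^2$.

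\textbf{Step 1: terms involving derivatives of the ambient curvature vanish.} Write $\overline{\ric} = a\,g - b\,\xi^\flat\otimes\xi^\flat$ with the constants $a = 2(2-\tau^2)$ and $b = 4(1-\tau^2)$.
\begin{itemize}
\item Tracing gives $\overline{\scal} = 3a - b = 8-2\tau^2$. This is constant, so $g(\nablaAmbient\,\overline{\scal},N)=0$.
\item Since $g$ is parallel, $(\nablaAmbient_N\overline{\ric})(N,N) = -2b\, C\, g(\nablaAmbient_N\xi,N)$.
\item By \eqref{xi property}, $g(\nablaAmbient_N\xi,N)=\tau\, g(N\wedge\xi,N)=0$, so this term also vanishes.
\end{itemize}

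\textbf{Step 2: the zeroth-order curvature terms.}
\begin{itemize}
\item First, $\overline{\ric}(N,N) = 2(2-\tau^2) - 4(1-\tau^2)C^2$.
\item Next, for $X,Y$ tangent to $\Sigma$, the Riemann tensor formula gives
\[
\overline{R}_N(X,Y) = (4-3\tau^2)g(X,Y) - 4(1-\tau^2)\big(g(X,T)g(Y,T) + C^2 g(X,Y)\big).
\]
\item The tangential part of the Ricci tensor is $2(2-\tau^2)g - 4(1-\tau^2)T^\flat\otimes T^\flat$.
\item For $g(\overline{R}_N+\overline{\ric},A)$ I use the splitting \eqref{algebraic identity}. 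Only the $T^\flat\otimes T^\flat$ parts pair nontrivially with the traceless $\traceless A$. This gives
\[
-2g(\overline{R}_N+\overline{\ric},A) = 16(1-\tau^2)\traceless A(T,T) - (8-2\tau^2)H.
\]
\item Finally, $\traceless A(T,T) = A(T,T) - \tfrac12 H(1-C^2)$.
\end{itemize}

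\textbf{Step 3: collect the coefficient of $H$.} Adding the contributions gives
\[
\big[6(2-\tau^2)-12(1-\tau^2)C^2\big] - (8-2\tau^2) - 8(1-\tau^2)(1-C^2),
\]
which simplifies to $-4(1-\tau^2)(1+C^2)$. Together with $-\Delta H - |\traceless A|^2H$ and the term $16(1-\tau^2)A(T,T)$, this yields the claimed formula.

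The only delicate point is the sign and index bookkeeping in $\overline{R}_N = \overline{R}(N,\cdot,\cdot,N)$ under the paper's curvature convention. I would double-check it against the round case $\tau=1$: there $\overline{R}_N = g$, and the operator must reduce to $-\Delta H - |\traceless A|^2 H$. The $C^2$ terms give a second check, since the final coefficient should depend on $C$ only through $1+C^2$.
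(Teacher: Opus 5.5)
Your proposal is correct and follows the paper's route: the paper gives no argument beyond saying the formula follows from specializing \cref{euler-lagrange equation of willmore energy} to $\berger$ (as in \cite{Fabio&Alma2022}). Your computation supplies exactly the omitted details, namely constant $\overline{\scal}$, $(\nablaAmbient_N\overline{\ric})(N,N)=0$ because $\xi$ is Killing, the splitting \eqref{algebraic identity}, and $\traceless A(T,T)=A(T,T)-\tfrac12 H(1-C^2)$, with the $H$-coefficient correctly collapsing to $-4(1-\tau^2)(1+C^2)$. Your reading of the garbled middle line of the curvature tensor as $4(1-\tau^2)g(Z,\xi)\big(g(X,\xi)Y-g(Y,\xi)X\big)$ is also the right one, since it is the only reading consistent with the stated Ricci tensor and with vertical sectional curvature $\tau^2$.
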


\begin{example}\label{ex: equador} 
In \cite[Th. 1.1.]{Carlotto&Mondino2014}, the existence of Willmore 2--spheres was established, by perturbative arguments, for any analytic metric sufficiently close to the round metric in $\Sphere^3$. Here we present an explicit example in $\berger$. Namely, the equator 
$$
\Sphere^2 = \{(a,w) \in \Sphere^3 \colon a \in \R, w \in \C, \ a^2+|w|^2 = 1 \}
$$
endowed with the metric induced by the ambient space $\berger$, which we denote by $\Sphere^2_\tau$. Consider the parametrization 
\begin{equation*}
    (-1,1) \times (0,\pi) \to \Sphere^2, \quad (a,\theta) \mapsto (a,\sqrt{1-a^2}e^{i\theta}).
\end{equation*} 
In these coordinates, the tangent component of $\xi$ is proportional to 
\begin{equation*}
    \partial_\theta (a,\theta)=(0,i\sqrt{1-a^2}e^{i\theta}).
\end{equation*}
By \cref{eq: conexão da Esfera de Berger}, we obtain $\nablaAmbient_{\partial_\theta}\partial_\theta = 0$, and hence $A(T,T) = 0$. Since the equator $\Sphere^2_\tau$ is a minimal surface of $\berger$ (see \cite[Th. 1.]{Torralbo&Urbano2010}), it follows from \cref{euler-lagrange equation of willmore surfaces in the Berger sphere} that it is also a Willmore one. Furthermore, note that as the action $\iso(\berger) \curvearrowright \Sphere^3$ is transitive, not only the above equator, but all great spheres of the round $\Sphere^3$, when endowed with the metric induced by $\berger$, are Willmore surfaces.
\end{example}

In light of \cref{euler-lagrange equation of willmore surfaces in the Berger sphere}, it is natural to look among CMC (constant-mean-curvature) surfaces to see which of them are also Willmore surfaces. Besides the example above, the aforementioned bifurcated brother $\widetilde{\Sigma}_\tau$, for $\tau \in (\sqrt{2},\infty)$, is also an example of such surfaces in $\berger$, which is non--minimal. On the other hand, we have the following non--existence result. 

\begin{proposition}\label{Prop: Non-existence of CMC Willmore surfaces}
    Let $0<\tau<1$ and $\Sigma \hookrightarrow \berger$ be an isometrically immersed oriented compact surface. If $\Sigma$ is a constant mean curvature Willmore surface, then $\Sigma$ is minimal, $T$ is an asymptotic direction, and the angle function $C$ is constant along the flow lines of $T$.
\end{proposition}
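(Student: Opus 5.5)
With $H$ constant, $\Delta H=0$, so by \cref{euler-lagrange equation of willmore surfaces in the Berger sphere} the Willmore equation becomes the pointwise identity
\begin{equation*}
    \big(|\traceless A|^2+4(1-\tau^2)(1+C^2)\big)H = 16(1-\tau^2)A(T,T).
\end{equation*}
The plan is to integrate this identity against a suitable weight, using an integral formula for $A(T,T)$ that comes from $\xi$ being Killing. The three conclusions should then follow once we know $H=0$ and the left-hand side vanishes identically.

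\textbf{Step 1: the integral formula.} Since $\xi$ is Killing, the normal component of its variation field, $C=g(\xi,N)$, is a Jacobi field: $J_\Sigma C=0$. Here $\ric(N,N)=2(2-\tau^2)-4(1-\tau^2)C^2$. We also have the standard identity $\nabla C = -S(T) + \tau\,(\text{rotation of }T)$, which follows from \cref{xi property} as in \cite{Daniel2007}. Differentiating $C$ along $T$ gives
\begin{equation*}
    T(C) = g(\nabla C,T) = -A(T,T),
\end{equation*}
because the rotation term is orthogonal to $T$. Next, $\diverg_\Sigma T = \diverg_\Sigma(\xi - CN) = -CH$, since $\xi$ is Killing and therefore has zero tangential divergence of its ambient part. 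Hence
\begin{equation*}
    \int_\Sigma A(T,T)\,d\Sigma = -\int_\Sigma T(C)\,d\Sigma = \int_\Sigma C\diverg T\,d\Sigma = -H\int_\Sigma C^2\,d\Sigma.
\end{equation*}

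\textbf{Step 2: integrate the Willmore equation.} Integrating the reduced equation and inserting Step 1 gives
\begin{equation*}
    H\int_\Sigma \Big(|\traceless A|^2 + 4(1-\tau^2)(1+C^2) + 16(1-\tau^2)C^2\Big)\,d\Sigma = 0.
\end{equation*}
For $0<\tau<1$ every term in the integrand is nonnegative, and the term $4(1-\tau^2)>0$ makes the integral strictly positive. Therefore $H=0$. This is exactly where the hypothesis $\tau<1$ is used.

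\textbf{Step 3: the remaining conclusions.} With $H=0$, the pointwise equation gives $16(1-\tau^2)A(T,T)=0$, so $A(T,T)=0$ and $T$ is an asymptotic direction wherever $T\neq 0$. Then $T(C)=-A(T,T)=0$, so $C$ is constant along the flow lines of $T$.

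\textbf{Main obstacle.} The delicate points are the sign conventions. Specifically, I must verify that:
\begin{itemize}
  \item the formula $\nabla C=-S(T)+\tau JT$ (up to sign) holds with $S(X)=\nablaAmbient_X N$ and \cref{xi property};
  \item $\diverg T=-CH$ holds with $H=\tr A$ under the paper's conventions.
\end{itemize}
If a sign flips, the coefficient $16(1-\tau^2)C^2$ in Step 2 becomes negative, and positivity is no longer automatic. In that case I would instead use the pointwise identity $A(T,T)\le |A|\,|T|^2$ together with $|T|^2=1-C^2$, or integrate against a different weight. I expect the sign check above to confirm the favorable sign.
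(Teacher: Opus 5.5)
There is a genuine gap: the sign check you flagged comes out unfavourably, and Step 2 fails. With the paper's conventions $S(X)=\nablaAmbient_X N$ and $A(X,Y)=g(\nablaAmbient_X N,Y)$, you get
\begin{equation*}
T(C)=g(\nablaAmbient_T\xi,N)+g(\xi,\nablaAmbient_T N)=\tau\, g(T\wedge\xi,N)+A(T,T)=A(T,T),
\end{equation*}
because $\xi=T+CN$ forces $g(T\wedge\xi,N)=0$. So $T(C)=+A(T,T)$. The formula $\nabla C=-S(T)+\dots$ you imported from Daniel is written for the opposite convention $SX=-\nablaAmbient_X N$. Your identity $\diverg T=-CH$ is correct. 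Hence $\int_\Sigma A(T,T)\,d\Sigma=+H\int_\Sigma C^2\,d\Sigma$, and the integrated equation becomes
\begin{equation*}
H\int_\Sigma\Big(|\traceless A|^2+4(1-\tau^2)(1-3C^2)\Big)\,d\Sigma=0.
\end{equation*}
This integrand is negative wherever $\traceless A$ is small and $C^2>1/3$, so nothing forces $H=0$. Your fallback $A(T,T)\le|A|\,|T|^2$ does not close the gap either. The resulting upper bound for $16(1-\tau^2)A(T,T)$ contains, already at $C=0$, the term $8(1-\tau^2)H$, which exceeds the $4(1-\tau^2)H$ on the left. It also contains a term linear in $|\traceless A|$ that is not dominated by $|\traceless A|^2H$ when $H$ is small.

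The missing idea is to use the reduced equation pointwise rather than integrated with weight $1$. For $0<\tau<1$ the coefficient $|\traceless A|^2+4(1-\tau^2)(1+C^2)$ is strictly positive. So if $H\neq 0$, then $T(C)=A(T,T)$ has the strict sign of $H$ at every point of $\Sigma$. At a point where $C$ attains its maximum on the compact surface, however, $dC=0$ and hence $T(C)=0$, a contradiction. The paper argues equivalently: $C$ would be strictly increasing and bounded along integral curves of $T$, and passing to a limit point by compactness gives the contradiction.

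If you want to keep an integral argument, the weight $e^{4C}$ works. Since $\int_\Sigma T(C)e^{4C}\,d\Sigma=-\tfrac14\int_\Sigma e^{4C}\diverg T\,d\Sigma=\tfrac{H}{4}\int_\Sigma Ce^{4C}\,d\Sigma$, multiplying the reduced equation by $e^{4C}$ and integrating gives
\begin{equation*}
H\int_\Sigma\Big(|\traceless A|^2+4(1-\tau^2)\big((C-\tfrac12)^2+\tfrac34\big)\Big)e^{4C}\,d\Sigma=0.
\end{equation*}
This does force $H=0$. Your Step 3 is fine once $H=0$ is known, since it does not depend on the sign.
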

\begin{proof}
    From \cref{xi property},
    \begin{align*}
        A(T,T)&=-g(N, \nablaAmbient_T T)= -g(N,\nablaAmbient_T \xi) +g(N,\nablaAmbient_T(CN)) \\
              &=-\tau g(N,T \wedge \xi) + dC(T) = dC(T).
    \end{align*}
Since $\Sigma$ is both CMC and Willmore, by \cref{euler-lagrange equation of willmore surfaces in the Berger sphere}, we get
\begin{align*}
  \big(|\traceless{A}|^2 + 4(1-\tau^2)(1+C^2)\big)H=16(1-\tau^2)dC(T).    
\end{align*}
Suppose, by contradiction, that $\Sigma$ is not minimal. Changing the orientation of $\Sigma$, we may assume that $H>0$. From the assumption $0<\tau<1$ and the above equation, we obtain
\begin{align*}
    dC(T)>0.
\end{align*}
For each $p\in \Sigma$, let $\gamma_p: \R \to \Sigma$ be the integral curve of $T$ with $\gamma_p(0)=p$. Define $f_p = C \circ \gamma_p$. From the above inequality, it follows that
\begin{align*}
     f_p' > 0, \quad \text{for every $p \in \Sigma$}.
\end{align*}
Since $C$ is bounded, $f_p$ is a bounded increasing function and by the mean value theorem there exists a sequence $t_n \nearrow +\infty$ such that $f_p'(t_n)\to 0$. Since $\Sigma$ is compact, we may suppose that $z_n :=\gamma_p(t_n) \to z$, for some $z \in \Sigma$. In this case,
\begin{align*}
    0 < f_z'(0) = dC(\xi(z)) = \lim_{n \to \infty} dC(\xi(z_n)) = \lim_{n \to \infty} f_p'(t_n)= 0. 
\end{align*}
A contradiction. Thus, $\Sigma$ is a minimal surface.

\end{proof}

We proceed to calculate the second variation of Willmore surfaces in $\berger$. Let $F \colon \Sigma \times I \to \berger$ be a normal variation of $\Sigma$ with $\partial_t = f_tN_t$ as in \cref{section: First Variation}, and suppose that $\Sigma=\Sigma_0$ is Willmore. Then, 
 \begin{align}\label{second variation incomplete}
     \left. \frac{d^2}{dt^2} \right|_{t=0} \willmore(\Sigma_t)
     &=\left. \frac{d}{dt} \right|_{t=0} \int_\Sigma f_t \mathcal{E}_\willmore(\Sigma_t) d\Sigma_t \notag \\
     &=\int_\Sigma f \left.\big(\partial_t \mathcal{E}_\willmore(\Sigma_t)\big)\right|_{t=0} d\Sigma. 
 \end{align}
 for $f = f_0$.

\begin{lemma} \label{second variation lemma}
    Let $\varphi \colon \Sigma \times I \to \R$ be a smooth map. The following variation formula holds:
    \begin{equation*}
        \partial_t (\Delta \varphi) = \Delta(\partial_t \varphi) + H g(\nabla \varphi,\nabla f) + f g(\nabla \varphi,\nabla H) -2 \diverg \big( f S(\nabla \varphi) \big).
    \end{equation*}
    In particular,
    \begin{align}
        \partial_t (\Delta H) &= - \Delta \big(J_{\Sigma_t} (f)\big) + H g(\nabla H,\nabla f) + f |\nabla H|^2 \notag \\ 
         & \quad   - 2g\big( \nabla f, S(\nabla H) \big) -2 f \diverg \big( S(\nabla H) \big). \label{variation of laplacian of H}
    \end{align}
    It also holds
    \begin{align}
        \partial_t \big(|\traceless A|^2H\big) &= (H^2 - |\traceless A|^2)J_{\Sigma_t}(f) - 2\big( g(\hess f, A) + f g(\overline{R}_N,A)  + f \tr(A^3) \big)H,  \label{variation of |traceless A|^2 H} \\
        \partial_t C &= -g(T,\nabla f), \label{variation of C} \\
        \partial_t A(T,T) &= f\big( (C^2-1) \overline{K}(T,N) + 2\tau  g\big(S(T),J(T)\big) - |S(T)|^2 \big) \notag \\
        & \quad + 2Cg(S(T),\nabla f) - \hess f (T,T). \label{variation of A(T,T)}
    \end{align} 
    
\end{lemma}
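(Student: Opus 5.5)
The plan is to work in local coordinates and differentiate each quantity directly, using the variation formulas already established in the proof of \cref{euler-lagrange equation of willmore energy}: \cref{variation of inverse of the metric}, \cref{variation of sqrt det g}, \cref{grad f}, \cref{variation of A_ij complete}, \cref{variation of squared 2ff} and \cref{variation of H}.

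\textbf{The Laplacian formula.} Write
\begin{equation*}
    \Delta\varphi = \frac{1}{\sqrt{\det g}}\,\partial_i\big(\sqrt{\det g}\,g^{ij}\partial_j\varphi\big).
\end{equation*}
Differentiating in $t$ produces three kinds of terms.
\begin{enumerate}
    \item[(i)] $\Delta(\partial_t\varphi)$.
    \item[(ii)] Terms coming from $\partial_t\sqrt{\det g}=fH\sqrt{\det g}$. The outer factor and the inner factor combine to
    \begin{equation*}
        -fH\Delta\varphi + \diverg(fH\nabla\varphi) = g\big(\nabla(fH),\nabla\varphi\big) = Hg(\nabla\varphi,\nabla f)+fg(\nabla\varphi,\nabla H).
    \end{equation*}
    \item[(iii)] The term from $\partial_t g^{ij}=-2fA^{ij}$, which gives $-2\diverg\big(fS(\nabla\varphi)\big)$.
\end{enumerate}
Taking $\varphi=H$, inserting $\partial_tH=-J_{\Sigma_t}(f)$ from \cref{variation of H}, and expanding the divergence $\diverg\big(fS(\nabla H)\big)$ by the product rule gives \cref{variation of laplacian of H}.

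\textbf{The formula for $|\traceless A|^2H$.} Use $|\traceless A|^2=|A|^2-\frac12H^2$ and the product rule. The factor $\partial_t|A|^2$ is given by \cref{variation of squared 2ff}. The remaining terms are
\begin{equation*}
    -H^2\,\partial_tH + |\traceless A|^2\,\partial_tH = (H^2-|\traceless A|^2)\,J_{\Sigma_t}(f),
\end{equation*}
which is exactly \cref{variation of |traceless A|^2 H}.

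\textbf{The formula for $C$.} Since $\xi$ is an ambient field,
\begin{equation*}
    \partial_tC = g(\nablaAmbient_{f N}\xi,N)+g(\xi,\nablaAmbient_{\partial_t}N).
\end{equation*}
By \cref{xi property}, $g(\nablaAmbient_{fN}\xi,N)=f\tau\, g(N\wedge\xi,N)=0$. By \cref{grad f}, $\nablaAmbient_{\partial_t}N=-\nabla f$, and the second term is $-g(\xi,\nabla f)=-g(T,\nabla f)$. This proves \cref{variation of C}.

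\textbf{The formula for $A(T,T)$, the main obstacle.} The difficulty is that $T$ depends on $t$. The cleanest route is the identity from the proof of \cref{Prop: Non-existence of CMC Willmore surfaces},
\begin{equation*}
    A(T,T)=dC(T)=g\big(\nabla C,\xi\big).
\end{equation*}
This identity holds for every $t$, so we may differentiate it rather than $A_{ij}T^iT^j$. Alternatively, one differentiates $A_{ij}T^iT^j$ in coordinates using \cref{variation of A_ij complete} and tracks the $t$-derivatives of the components $T^i$.

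In the second approach, the components are $T^i=g^{ij}g(\xi,\partial_j)$. To compute $\partial_tT^i$ we need:
\begin{enumerate}
    \item[(a)] $\partial_tg^{ij}=-2fA^{ij}$;
    \item[(b)] $\partial_t\, g(\xi,\partial_j) = g(\nablaAmbient_{fN}\xi,\partial_j)+g(\xi,\nablaAmbient_{\partial_j}(fN))$. The first piece is $f\tau\, g(N\wedge\xi,\partial_j)$, which involves $J(T)$. The second piece is $C\,\partial_jf+f\,g\big(\xi,S(\partial_j)\big)$.
\end{enumerate}
Contracting the result with $2A_{ik}T^k$, the term $+2fA(T,T)$ coming from $\partial_tT$ cancels against the $-2f|S(T)|^2$ coming from $\partial_tg^{ij}$, up to the single $-|S(T)|^2$ that survives. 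What remains is $2Cg\big(S(T),\nabla f\big)+2\tau f\,g\big(S(T),J(T)\big)$, with the signs of the $\tau$-term fixed by the orientation conventions for $\wedge$ and $J$.

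Finally, use \cref{Riemann tensor of the Berger sphere} to express $\overline{R}_N(T,T)$. For orthogonal $T$ and $N$ with $|T|^2=1-C^2$, this equals
\begin{equation*}
    \overline{R}_N(T,T) = (1-C^2)\,\overline{K}(T,N),
\end{equation*}
which accounts for the term $(C^2-1)\overline{K}(T,N)$. The Hessian term is $-\hess f(T,T)$. Careful bookkeeping of these signs is the step I expect to take the most effort.
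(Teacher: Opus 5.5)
Your proposal is correct. For the Laplacian formula, the formula for $\partial_t(|\traceless A|^2H)$ and the formula for $\partial_tC$, it is the paper's argument.

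For $\partial_tA(T,T)$ your route is a mild reorganization of the paper's. You differentiate $A_{ij}T^iT^j$, so the curvature term, the Hessian term and a $+f|S(T)|^2$ come directly from \cref{variation of A_ij complete}. The only new input is then the vector field $\partial_tT^i\,\partial_i=\tau fJ(T)+C\nabla f-fS(T)$. The paper instead differentiates $g(\nablaAmbient_TN,T)$ covariantly and splits it into (I) and (II). It recomputes the curvature and Hessian terms through $\overline R(\partial_t,T)N$ and $\nablaAmbient_T\nablaAmbient_{\partial_t}N$, and it needs both $\nablaAmbient_{\partial_t}T$ and the bracket $[\partial_t,T]$; that bracket is exactly your $\partial_tT^i\,\partial_i$. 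So both arguments rest on the same vector field. Yours is shorter because it reuses an identity already proved; the paper's is coordinate-free.

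Your sentence describing the cancellation is wrong, although the final formula you state is right. Pair $\partial_tT$ with $2A(T,\cdot)$:
the piece $f\,g(\xi,S(\partial_j))$ of (b) contributes $+2f|S(T)|^2$, not $+2fA(T,T)$;
the $\partial_tg^{ij}$ piece contributes $-4f|S(T)|^2$, not $-2f|S(T)|^2$;
only after adding the $+f|S(T)|^2$ from $(\partial_tA_{ij})T^iT^j$ do you get the net $-f|S(T)|^2$.

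The sign of the $\tau$-term is not a matter of convention. Since $N\wedge\xi=N\wedge(T+CN)=J(T)$, you get $+2\tau f\,g(S(T),J(T))$ directly.

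The route via $A(T,T)=dC(T)$ works but is not shorter. It needs the same $\partial_tT$, together with $\nabla C=S(T)-\tau J(T)$ and $\nabla_TT=-C\big(S(T)+\tau J(T)\big)$. It produces $-\tau^2f|T|^2$ in place of $f(C^2-1)\overline K(T,N)$. These agree because, where $T\neq0$, the plane spanned by $T$ and $N$ contains $\xi$ and so has curvature $\tau^2$.
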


\begin{proof}
From \cref{variation of inverse of the metric,variation of sqrt det g}, it follows that,
    \begin{align*}
        \partial_t (\Delta \varphi) 
        &= \partial_t\big(\frac{1}{\sqrt{\det g}}\partial_i(g^{ij}\sqrt{\det g} \, \partial_j \varphi)\big)  \\
        &=\partial_t(\frac{1}{\sqrt{\det g}}) \partial_i(g^{ij}\sqrt{\det g} \, \partial_j  \varphi) + \frac{1}{\sqrt{\det g}} \partial_i \partial_t (g^{ij}\sqrt{\det g} \, \partial_j \varphi)  \\
        &= -\frac{\partial_t(\sqrt{\det g})}{\det g} \partial_i(g^{ij}\sqrt{\det g} \, \partial_j \varphi) + \frac{1}{\sqrt{\det g}} \partial_i \big( \partial_t (g^{ij} \sqrt{\det g})  \partial_j \varphi \big) + \Delta\partial_t \varphi  \\
        &= -fH \Delta \varphi + \Delta(\partial_t \varphi) + \frac{1}{\sqrt{\det g}} \partial_i \big( -2fg^{ip}g^{jq}A_{pq}\sqrt{\det g} \, \partial_j \varphi + g^{ij} fH \sqrt{\det g} \, \partial_j \varphi   \big)  \\
        &= -fH \Delta \varphi + \Delta(\partial_t \varphi) + \frac{1}{\sqrt{\det g}} \partial_i ( -2fg^{ip}g^{jq}A_{pq}\sqrt{\det g} \, \partial_j \varphi) \notag \\ & \quad + \frac{fH}{\sqrt{\det g}} \partial_i (g^{ij} \sqrt{\det g} \, \partial_j \varphi) + g^{ij} \partial_j \varphi \, \partial_i (fH)  \\
        &= \Delta(\partial_t \varphi) + g(\nabla \varphi,\nabla(fH)) + \frac{1}{\sqrt{\det g}} \partial_i ( -2fg^{ip}g^{jq}A_{pq}\sqrt{\det g} \, \partial_j \varphi)  \\
        &= \Delta(\partial_t \varphi) + H g(\nabla \varphi,\nabla f) + f g(\nabla \varphi,\nabla H) + \frac{-2}{\sqrt{\det g}} \partial_i \Big( f\big(S(\nabla \varphi)\big)^i \sqrt{\det g} \Big)  \\
        &= \Delta(\partial_t \varphi) + H g(\nabla \varphi,\nabla f) + f g(\nabla \varphi,\nabla H) -2 \diverg \big( f S(\nabla \varphi) \big).  
    \end{align*} 
In particular, by \cref{variation of H}
    \begin{align*}
        \partial_t (\Delta H) = - \Delta \big( J_{\Sigma_t}(f) \big) + H g(\nabla H,\nabla f) + f |\nabla H|^2 - 2g\big( \nabla f, S(\nabla H) \big) -2 f \diverg \big( S(\nabla H) \big).
    \end{align*}
The \cref{variation of |traceless A|^2 H} is a consequence of \cref{Squared Norm of Traceless secondff,variation of H,variation of squared 2ff}. Since the variation is normal, from \cref{xi property,grad f} it follows \cref{variation of C}.
    \begin{align}
        \partial_t A(T,T) &= \underbrace{g(\nablaAmbient_{\partial_t} \nablaAmbient_T N,T)}_{\text{(I)} } + \underbrace{g(\nablaAmbient_T N, \nablaAmbient_{\partial_t} T)}_{\text{(II)}} \label{variation of ATT incomplete}.
    \end{align}
From \cref{xi property,variation of C,grad f} we have that,
    \begin{align}
        \nablaAmbient_{\partial_t} T &= \tau f N \wedge \xi + g(T,\nabla f)N  + C \nabla f \notag \\
        &= \tau f J(T) + g(T,\nabla f)N  + C \nabla f. \label{derivative of T in direction of partial_t}
    \end{align}
    Then, 
    \begin{align*}
        (\text{II}) = \tau f g\big( S(T),J(T) \big) + C g\big( S(T),\nabla f \big).
    \end{align*}
    To compute $\text{(I)}$, notice that from \cref{grad f},
    \begin{align*}
        \nablaAmbient_{\partial_t} \nablaAmbient_T N &= \overline{R}(\partial_t,T)N + \nablaAmbient_T \nablaAmbient_{\partial_t} N + \nablaAmbient_{[\partial_t,T]} N \\
        &= -f\overline{R}(T,N)N - \nablaAmbient_T \nabla f + \nablaAmbient_{[\partial_t,T]} N.
    \end{align*}
    Then,
    \begin{align*}
        \text{(I)} = f(C^2-1) \overline{K}(T,N) - \hess f (T,T) + g(\nablaAmbient_{[\partial_t,T]}N,T).
    \end{align*}
    From \cref{derivative of T in direction of partial_t}
    \begin{align*}
        [\partial_t,T]&= \nablaAmbient_{\partial_t} T - \nablaAmbient_T \partial_t \\
        &= \tau f J(T) + g(T,\nabla f)N + C \nabla f -T(f)N - f S(T).
    \end{align*}  
    To further compute \text{(I)},
    \begin{align*}
        g(\nablaAmbient_{[\partial_t,T]}N ,T) &= \tau f g\big(S\big(J(T)\big),T\big) + Cg(S(\nabla f), T) - f g(S^2(T),T)  \\
        &= \tau f g\big(S(T),J(T)\big) + C g(S(T),\nabla f) - f |S(T)|^2.
    \end{align*}
    Hence,
    \begin{align*}
       \text{(I)} = f\big((C^2-1) \overline{K}(T,N) + \tau g\big(S(T),J(T)\big) - |S(T)|^2 \big) + C g(S(T),\nabla f) - \hess f (T,T).
   \end{align*}
   From \cref{variation of ATT incomplete}
   \begin{align*}
       \partial_t A(T,T) &= f\big((C^2 -1) \overline{K}(T,N) + 2\tau g \big(S(T),J(T) \big) - |S(T)|^2 \big) \\
       &\quad + 2C g(S(T),\nabla f) -\hess f (T,T).
   \end{align*}
    
\end{proof}

\begin{theorem}\label{second variation of surfaces in the Berger sphere}
    Let $\Sigma \subset \berger$ be a compact Willmore surface. Then, the second variation of the Willmore energy is
    \begin{align*}
      \left.\frac{d^2}{dt^2}\right|_{t=0} \willmore(\Sigma_t) = -\int_\Sigma f \mathcal{L}_\Sigma (f) d\Sigma, 
    \end{align*}
    where the second derivative operator  $\mathcal{L}_\Sigma$ is given by 
    \begin{align*}
      \mathcal{L}_{\Sigma}(f) 
      &=- \big(J_{\Sigma} \circ (J_\Sigma - 2(2- \tau^2))\big)(f) + 4(\tau^2-1)(1+2C^2)J_{\Sigma}(f) \\ 
      & \quad + 16(\tau^2-1)\Big(f\big((C^2-1)\overline{K}(T,N) + 2\tau g\big(S(T),J(T)\big) - |S(T)|^2 \big)  \\
      & \quad +2C g(S(T),\nabla f) - \hess f (T,T) \Big) \\
      & \quad + f \big(|\nabla H|^2 - 2H g(\overline{R}_N,A)- 2\diverg\big(S(\nabla H) \big) - 2H \tr(A^3)  \big)  \\ 
      & \quad + H\big(g(\nabla H, \nabla f)- 2 g(\hess f, A) + \frac{3}{2}H J_\Sigma(f)  + 8(\tau^2-1)C g(T,\nabla f) \big) \\
      & \quad - 2 g(S(\nabla H), \nabla f)
    \end{align*}
\end{theorem}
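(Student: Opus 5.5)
By \cref{second variation incomplete}, the second derivative of the Willmore energy at a Willmore surface is $\int_\Sigma f\,\partial_t \mathcal{E}_\willmore(\Sigma_t)|_{t=0}\,d\Sigma$. The plan is therefore to differentiate, term by term, the expression of $\mathcal{E}_\willmore(\Sigma_t)$ from \cref{euler-lagrange equation of willmore surfaces in the Berger sphere}. This expression is
\begin{equation*}
-\Delta H - |\traceless A|^2H - 4(1-\tau^2)(1+C^2)H + 16(1-\tau^2)A(T,T).
\end{equation*}
Setting $\mathcal{L}_\Sigma(f) := -\partial_t \mathcal{E}_\willmore(\Sigma_t)|_{t=0}$ then gives the stated sign convention. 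One point needs checking first. Differentiating $\int f_t\mathcal{E}_\willmore(\Sigma_t)\,d\Sigma_t$ also produces terms involving $\partial_t f_t$ and $\partial_t(d\Sigma_t)$. Both are multiplied by $\mathcal{E}_\willmore(\Sigma_0)=0$, so they vanish, and only $\partial_t\mathcal{E}_\willmore$ survives.

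Each of the four terms is then handled with \cref{second variation lemma}. For $-\partial_t(\Delta H)$, apply \cref{variation of laplacian of H}. This gives $\Delta(J_\Sigma f)$, the terms $-Hg(\nabla H,\nabla f)$, $-f|\nabla H|^2$ and $2g(\nabla f,S(\nabla H))$, and $2f\diverg(S(\nabla H))$. For $-\partial_t(|\traceless A|^2H)$, use \cref{variation of |traceless A|^2 H}. For the third term, the product rule together with \cref{variation of C} and \cref{variation of H} gives
\begin{equation*}
\partial_t\big((1+C^2)H\big) = -2CHg(T,\nabla f) - (1+C^2)J_\Sigma f.
\end{equation*}
The last term comes directly from \cref{variation of A(T,T)}, multiplied by $16(1-\tau^2)$. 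After a global sign change, this contribution reproduces the block $16(\tau^2-1)\big(\dots\big)$ of the statement verbatim.

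The remaining task, and the main bookkeeping obstacle, is to reorganize the terms containing $J_\Sigma f$ into the stated composite form. Collecting them gives
\begin{equation*}
-\Delta(J_\Sigma f) + (H^2-|\traceless A|^2)J_\Sigma f + 4(1-\tau^2)(1+C^2)J_\Sigma f
\end{equation*}
(up to the overall sign). The aim is to write this as $-J_\Sigma\circ(J_\Sigma - 2(2-\tau^2))f$ plus lower-order corrections. Writing $\Delta = J_\Sigma - (|A|^2+\overline{\ric}(N,N))$, we have
\begin{equation*}
-\Delta(J_\Sigma f) = -J_\Sigma(J_\Sigma f) + (|A|^2+\overline{\ric}(N,N))J_\Sigma f.
\end{equation*}
Next, substitute $|A|^2 = |\traceless A|^2 + \tfrac12H^2$ and, by \cref{Ricci tensor of the berger Sphere}, $\overline{\ric}(N,N) = 2(2-\tau^2) - 4(1-\tau^2)C^2$. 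After these substitutions, the coefficient of $J_\Sigma f$ should split as three pieces. The first is $2(2-\tau^2)$, which produces $+2(2-\tau^2)J_\Sigma f$ and hence the composite. The second is $4(\tau^2-1)(1+2C^2)$, obtained by combining $-4(1-\tau^2)C^2$ with $-4(1-\tau^2)(1+C^2)$. The third is $\tfrac32H^2$.

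The sign conventions here (the sign of $\mathcal{L}$, and $\partial_tH=-J_\Sigma f$) must be tracked carefully so that these coefficients come out exactly as stated. The remaining $H$-terms are collected last: $g(\hess f,A)$ with $f\,g(\overline{R}_N,A)$ and $f\tr(A^3)$ from \cref{variation of |traceless A|^2 H}, the $C\,g(T,\nabla f)$ term with coefficient $8(\tau^2-1)H$, and the gradient terms from $\partial_t\Delta H$. Comparing these with the statement line by line completes the argument. No integration by parts is needed, since the statement is a pointwise formula for $\mathcal{L}_\Sigma$.
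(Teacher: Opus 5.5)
Your proposal is correct and follows the paper's argument exactly: the paper's proof just cites \cref{second variation incomplete}, \cref{euler-lagrange equation of willmore surfaces in the Berger sphere} and \cref{second variation lemma}, and your term-by-term differentiation (rewriting $\Delta = J_\Sigma - (|A|^2+\overline{\ric}(N,N))$) supplies the bookkeeping the paper leaves implicit and reproduces every coefficient of the statement. One harmless slip: in your displayed collection of the $J_\Sigma f$ terms, the last term should carry the opposite relative sign, $-4(1-\tau^2)(1+C^2)J_\Sigma f$, which is the sign you in fact use when you combine it with $-4(1-\tau^2)C^2$.
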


\begin{proof}
    From \cref{second variation incomplete,euler-lagrange equation of willmore surfaces in the Berger sphere,second variation lemma} we conclude the calculation of the second variation.
\end{proof}
 \section{Hopf tori in Berger spheres}\label{section: hopf tori}
We study a classical family of surfaces in Berger spheres, namely the Hopf tori. In particular, to the best of our knowledge, we recover all known explicit examples of Willmore tori in $\berger$, as described in \cite{Manuel1997,Manuel2014}.

\medskip
Let $\beta : I \hookrightarrow \Sphere^2\left(\frac{1}{2}\right)$ be a smooth regular closed curve, defined on a compact interval $I$, and denote $\hopftori = \pi^{-1}(\beta(I))$, where $\pi$ is the Hopf fibration. The family of immersed surfaces $\{\hopftori\}_\beta$ is known as the \emph{Hopf tori} family. We endow $\hopftori$ with the metric induced from $\berger$. Each $\hopftori \subset \berger$ is a flat torus, which is embedded whenever $\beta$ is simple. Note that $\xi$ is tangent to $\hopftori$. 

\medskip
When $\beta$ parametrizes the equator $\Sphere^2\left(\frac{1}{2}\right) \cap (\C \times \{0\})$, the surface $\hopftori$ is the Clifford torus, which we denote by $\Sigma_\tau$. The following Hopf torus will also be relevant. Let $\tilde{\beta}$ parametrize the horizontal circle $\Sphere^2\left(\frac{1}{2}\right) \cap (\C \times \{\frac{1}{2}\sqrt{\frac{\tau^2-2}{\tau^2-1}}\})$ of constant geodesic curvature $2\sqrt{\tau^2-2}$ in $\Sphere^2\left(\frac{1}{2}\right)$. We denote $\widetilde{\Sigma}_\tau = \Sigma^{\tilde{\beta}}$. We observe that $\widetilde{\Sigma}_\tau$ = $\Sphere^1(a) \times \Sphere^1(b)$, where 
\begin{align}\label{eq: constantes do irmao bifurcado}
    a = a(\tau)  = \sqrt{\frac{1}{2}+c(\tau)}, \quad b = b(\tau) = \sqrt{\frac{1}{2}-c(\tau)}, \quad \text{and} \quad c = c(\tau) = \frac{1}{2}\sqrt{\frac{\tau^2-2}{\tau^2-1}},
\end{align} 
are the radii of the circles, and the height of the horizontal circle parametrized by $\tilde{\beta}$, respectively. As we will see below, $\widetilde{\Sigma}_\tau$ is a Willmore surface bifurcating from the Clifford torus. For this reason, we refer to it as the \emph{bifurcated brother}.

\medskip
We proceed to compute the shape operator of $\hopftori$ in $\berger$. Suppose that $\beta$ is parametrized by the arc length. As the computation is local, we may assume that $\beta$ is embedded in order to simplify the notation. Let the spheres $\Sphere^2\left(\frac{1}{2}\right)$ and $\Sphere^3$ be oriented by the inner unit normal vector field. Let $W$ be a local vector field of $\Sphere^2\left(\frac{1}{2}\right)$ extending $\beta'$ and $\vec{n}$ be the inner unit normal orienting $\Sphere^2\left(\frac{1}{2}\right)$. Let $N=\vec{n} \times W$, where $\times$ is the vector product of $\R^3$, be the unit vector field tangent to $\Sphere^2\left(\frac{1}{2}\right)$, orthogonal to $\beta$, and such that $k_g^\beta=<\beta'',N \circ \beta>$ is the geodesic curvature of $\beta$. Let $\overline{W}$ and $\overline{N}$ be the horizontal lifts of $W$ and $N$ with respect to $\pi$. Then, $(\overline{W},\xi,\overline{N})$ and $(\overline{W}, \xi) $ are positively oriented orthonormal local frames of $T\berger$ and $T\hopftori$. In particular, this is the orientation defining the vector product where \cref{xi property} holds.

\begin{lemma}\label{hopf tori shape-operator}
   Let $S$ be the shape operator of $\hopftori$ with respect to $\overline{N}$. Then, the matrix representation of $S$ in the orthonormal frame $(\overline{W},\xi)$ is given by
   \begin{equation*}
       \begin{pmatrix}
       -k_g^\beta & -\tau \\
       -\tau & 0 
   \end{pmatrix}.
   \end{equation*}
\end{lemma}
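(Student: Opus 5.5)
Since $S(X)=\nablaAmbient_X \overline{N}$ and $(\overline{W},\xi)$ is orthonormal, the matrix entries are $g(\nablaAmbient_{\overline{W}}\overline{N},\overline{W})$, $g(\nablaAmbient_{\xi}\overline{N},\overline{W})$, $g(\nablaAmbient_{\overline{W}}\overline{N},\xi)$ and $g(\nablaAmbient_{\xi}\overline{N},\xi)$. The plan is to compute each of these using only \cref{xi property} and the fact that $\pi\colon\berger\to\Sphere^2\left(\frac{1}{2}\right)$ is a Riemannian submersion. The Berger metric only rescales the fibres, so horizontal lifts of orthonormal vectors remain orthonormal for $g_\tau$.

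\textbf{The $\xi$-entries.} First, $g(\nablaAmbient_\xi \overline{N},\xi) = -g(\overline{N},\nablaAmbient_\xi\xi)=0$, because $\nablaAmbient_\xi\xi=\tau\,\xi\wedge\xi=0$; so the $(\xi,\xi)$ entry vanishes. For the off-diagonal terms I use $g(\overline{N},\xi)=0$ to write
\[
g(\nablaAmbient_{\overline{W}}\overline{N},\xi)=-g(\overline{N},\nablaAmbient_{\overline{W}}\xi)=-\tau\, g(\overline{N},\overline{W}\wedge\xi)=-\tau\, dV_\tau(\overline{W},\xi,\overline{N})=-\tau,
\]
using that $(\overline{W},\xi,\overline{N})$ is positively oriented. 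Since the shape operator is self-adjoint, the $(\overline{W},\xi)$ entry is also $-\tau$. As a consistency check, I will compute it directly as well. Horizontal lifts of basic fields are $\pi$-related to fields on the base, and $\xi$ is $\pi$-related to $0$, so $[\xi,\overline{N}]$ is vertical. Hence $g(\nablaAmbient_\xi\overline{N},\overline{W})=g(\nablaAmbient_{\overline{N}}\xi,\overline{W})=\tau\, g(\overline{N}\wedge\xi,\overline{W})=\tau\, dV_\tau(\overline{N},\xi,\overline{W})=-\tau$.

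\textbf{The horizontal entry.} By O'Neill's formula for Riemannian submersions, the horizontal part of $\nablaAmbient_{\overline{W}}\overline{N}$ is the horizontal lift of $\nabla^{\Sphere^2}_W N$. Therefore $g(\nablaAmbient_{\overline{W}}\overline{N},\overline{W})=\ip{\nabla^{\Sphere^2}_W N}{W}=-\ip{N}{\nabla^{\Sphere^2}_W W}$. Along $\beta$ this equals $-\ip{\beta''}{N}=-k_g^\beta$, by the definition of the geodesic curvature given before the lemma.

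\textbf{Main obstacle.} The delicate point is the bookkeeping of signs and orientations. This means checking that the orientation of $(\overline{W},\xi,\overline{N})$ fixed in the text is exactly the one for which \cref{xi property} holds with a plus sign, and that the convention $N=\vec n\times W$ with inner normals gives $k_g^\beta=\ip{\beta''}{N}$ with the sign used. I would verify this on the Clifford torus at a point, comparing with \cref{eq: conexão da Esfera de Berger}: the off-diagonal term should be proportional to $\tau$ and the diagonal term should vanish there, since $k_g=0$ on the equator.
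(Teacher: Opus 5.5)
Your proposal is correct and follows the paper's proof essentially step by step. You get the $(\xi,\xi)$ and mixed entries from \cref{xi property} together with the positive orientation of $(\overline{W},\xi,\overline{N})$, and the $(\overline{W},\overline{W})$ entry from the Riemannian submersion (O'Neill) identification $g(\nablaAmbient_{\overline{W}}\overline{N},\overline{W})=\ip{\nabla_W N}{W}=-k_g^\beta$. Your direct computation of the other off-diagonal entry via verticality of $[\xi,\overline{N}]$ is a correct but redundant cross-check, given that the shape operator is self-adjoint.
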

\begin{proof}
From \cref{xi property}, and the fact that the Hopf fibration is a Riemannian submersion, we obtain:
    \begin{align*}
        g(S(\xi),\xi) &=-g(\overline{N},\nablaAmbient_\xi \xi) = 0, \\
        g(S(\overline{W}),\xi) &= -g(\overline{N}, \nablaAmbient_{\overline{W}} \xi)=-g(\overline{N}, \tau \overline{N}) = -\tau, \\
        g(S(\overline{W}),\overline{W}) &= g(\nablaAmbient_{\overline{W}} \overline{N}, \overline{W}) = \ip{\nabla_W N}{W}=-k_g^\beta.
    \end{align*}
    
\end{proof}

Recall that an elastica curve is a critical point of the functional 
\begin{equation*}
    \mathcal{F}_\lambda(\beta) = \int_I (k_g^\beta(t))^2 + \lambda \, dt,
\end{equation*}
where $\lambda \in \R$ is a constant playing the role of a Lagrange multiplier.
Langer and Singer classified elastica curves in the round $2$-sphere \cite{LangerSinger1984}.

\begin{corollary}\label{corollary: Classification of Hopf Willmore surfaces}
    The Hopf torus $\hopftori$ is a Willmore surface of $\berger$ precisely when the curve $\beta$ is an elastica of $\mathcal{F}_\lambda$ with Lagrange multiplier $\lambda = 4\tau^2$.
\end{corollary}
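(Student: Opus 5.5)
The plan is to compute the Euler--Lagrange operator of \cref{euler-lagrange equation of willmore surfaces in the Berger sphere} directly on $\hopftori$, using \cref{hopf tori shape-operator}, and to compare the result with the Euler--Lagrange equation of $\mathcal{F}_\lambda$ on $\Sphere^2\left(\frac{1}{2}\right)$.

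First I read off the invariants of $\hopftori$ from the matrix of $S$ in the frame $(\overline{W},\xi)$. The mean curvature is $H=-k_g^\beta\circ\pi$ and $|A|^2=(k_g^\beta)^2+2\tau^2$. By \cref{Squared Norm of Traceless secondff} this gives $|\traceless A|^2=\tfrac12 (k_g^\beta)^2+2\tau^2$. Since $\xi$ is tangent to $\hopftori$, the angle function vanishes, $C=0$, so $T=\xi$ and $A(T,T)=g(S(\xi),\xi)=0$. Substituting into \cref{euler-lagrange equation of willmore surfaces in the Berger sphere}, and writing $k=k_g^\beta$, I get
\begin{equation*}
\mathcal{E}_\willmore(\hopftori)=\Delta k + k\Big(\tfrac12 k^2+2\tau^2+4(1-\tau^2)\Big)=\Delta k+\tfrac12 k^3+(4-2\tau^2)k .
\end{equation*}

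Next I compute $\Delta k$ on $\hopftori$. The function $k$ is constant along the fibres, since $\xi(k)=0$. The map $\pi$ is a Riemannian submersion from $\berger$ onto $\Sphere^2\left(\frac{1}{2}\right)$ for every $\tau$, because $g_\tau$ agrees with the round metric on horizontal vectors. Hence $\overline{W}(k)=k'(s)$, where $s$ is the arc length of $\beta$. The torus $\hopftori$ is flat, and in the frame $(\overline{W},\xi)$ both frame fields are geodesic: $\nabla_\xi\xi=0$ by \cref{xi property}, and the horizontal lift of a curve is horizontal. So $\Delta k=\hess k(\overline W,\overline W)=k''(s)$. I expect this step to be the main obstacle: one must check that there are no extra first-order terms, that is, that $\nabla_{\overline W}\overline W$ has no $\xi$-component. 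This holds because $g(\nabla_{\overline W}\overline W,\xi)=-g(\overline W,\nabla_{\overline W}\xi)=-\tau g(\overline W,\overline W\wedge\xi)=0$. The Willmore condition therefore becomes the ODE $k''+\tfrac12k^3+(4-2\tau^2)k=0$ along $\beta$.

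Finally, I recall the Euler--Lagrange equation of $\mathcal{F}_\lambda$ on a surface of constant Gaussian curvature $G$, namely $2k''+k^3+(2G-\lambda)k=0$ (Langer--Singer). On $\Sphere^2\left(\frac{1}{2}\right)$ we have $G=4$, so this reads $k''+\tfrac12 k^3+(4-\tfrac{\lambda}{2})k=0$. It coincides with the ODE above exactly when $\lambda=4\tau^2$. Both equations are odd in $k$, so the choice of orientation, which only changes the sign of $k$, does not matter. This proves the corollary. Alternatively, the same conclusion follows from the principle of symmetric criticality for the $\Sphere^1$-action by Hopf fibres, since $\willmore(\hopftori)=\pi\tau\int_\beta \big(\tfrac12 k^2+2\tau^2\big)\,ds$ is a multiple of $\mathcal{F}_{4\tau^2}(\beta)$. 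I would mention this as a consistency check.
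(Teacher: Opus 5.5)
Your proposal is correct and follows the paper's route: you substitute \cref{hopf tori shape-operator} (with $C=0$ and $A(\xi,\xi)=0$) into \cref{euler-lagrange equation of willmore surfaces in the Berger sphere}, use that the fibres are geodesic to get $\Delta H=-(k_g^\beta)''$, and compare with Langer--Singer. Your normalization $2k''+k^3+(8-4\tau^2)k=0$ is in fact the right one, since the paper's displayed ODE drops the factor $2$ on $k''$ without affecting the conclusion; the step you flag as the obstacle is automatic because $\xi(k)=0$, and the fact that really matters is $\nabla_\xi\xi=0$, which you do have; and the constant in your consistency check should read $\willmore(\hopftori)=2\pi\tau\int_\beta(\tfrac12k^2+2\tau^2)\,ds=\pi\tau\,\mathcal{F}_{4\tau^2}(\beta)$, which is harmless because only proportionality is used.
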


\begin{proof}
    By \cref{euler-lagrange equation of willmore surfaces in the Berger sphere,hopf tori shape-operator}, since the fibres of $\pi$ are totally-geodesic in $\berger$, using O’Neill’s covariant derivative formulas we obtain that $\hopftori$ is Willmore exactly when
    \begin{align}
        (k_g^\beta)'' + (k_g^\beta)^3 + (8-4\tau^2)k_g^\beta = 0, \label{euler-lagrange equation of elastica}
    \end{align}
    which is the Euler-Lagrange equation of $\mathcal{F}_{4\tau^2}$ by \cite{LangerSinger1984}.
    
\end{proof}

\begin{remark}\label{remark: trivial bifurcation at sqrt 2}
    The Euler--Lagrange equation \cref{euler-lagrange equation of elastica} admits two trivial solutions, namely 
    \begin{equation*}
        k_g^\beta = 0 \quad \text{and} \quad k_g^{\tilde{\beta}} = 2\sqrt{\tau^2-2} \text{  (when  } \tau>\sqrt{2}).
    \end{equation*}
    The former corresponds to the Clifford torus $\Sigma_\tau$. The latter corresponds to $\widetilde{\Sigma}_\tau = \Sigma^{\tilde{\beta}}$ when $\tilde{\beta}$ parametrizes the circle $\Sphere^2\left(\frac{1}{2}\right) \cap (\C \times \{\frac{1}{2}\sqrt{\frac{\tau^2-2}{\tau^2-1}} \})$. Notice that $\widetilde{\Sigma}_\tau$ approaches $\Sigma_\tau$ as $\tau \searrow \sqrt{2}$ and goes to the north pole fibre $\pi^{-1}\big((0,\frac{1}{2})\big) \subset \berger$ as $\tau \nearrow +\infty$. This is one of the two bifurcations occurring in \cref{theorem: Bifurcations of the Clifford Torus} that we can describe explicitly (see \cref{remark: trivial bifurcation at 1} for the description of the other one, which is trivial). We also note that $\widetilde{\Sigma}_\tau$ is an example of a non-minimal CMC Willmore surface in the range $\tau \in (\sqrt{2}, \infty)$, contrasting with \cref{Prop: Non-existence of CMC Willmore surfaces}.
\end{remark}

\begin{remark}\label{remark: Willmore energy of Hopf tori family}
From the fact that the Hopf map $\berger \to \Sphere^2\left(\frac{1}{2}\right)$ is a Riemannian submersion whose fibres have length equal to $2\pi\tau$, applying \cref{hopf tori shape-operator} we compute the Willmore energies of a Hopf torus $\hopftori$:
\begin{align}
    \willmore(\hopftori) &= 2\pi \tau \int_I \frac{(k_g^{\beta})^2}{2} + 2\tau^2 \, ds \notag  \\
    &=\pi\tau \mathcal{F}_{4\tau^2}(\beta). \label{formula: Willmore energy of hopf torus}
\end{align}
In particular, $\Sigma_\tau$ and $\widetilde{\Sigma}_\tau$ have Willmore energies
\begin{align*}
\willmore(\Sigma_\tau) = 4 \pi^2 \tau^3 \ \ \text{and} \ \ \willmore(\widetilde{\Sigma}_\tau)=8\pi^2\tau\sqrt{\tau^2-1}.
\end{align*}
As soon as $\widetilde{\Sigma}_\tau$ exists, that is, $\tau>\sqrt{2}$, we have $\willmore(\widetilde{\Sigma}_\tau)<\willmore(\Sigma_\tau)$. In view of \cref{theorem: Stability of the Clifford torus}, $\Sigma_\tau$ is not a local minimum of the Willmore energy when $\tau>1$, since it is unstable. By \cref{theorem: instabilidade do irmao paralelo}, $\widetilde{\Sigma}_\tau$ never locally minimizes the Willmore energy for the same reason. In \cite[Th. 4.3.]{Langer&Singer1987}, it is shown that all the non--geodesic elasticae of $\mathcal{F}_\lambda$ are unstable when $0<\lambda \le \frac{32}{7}$. Hence, by \cref{corollary: Classification of Hopf Willmore surfaces,formula: Willmore energy of hopf torus}, if $\hopftori \ne \Sigma_\tau$ is a Willmore surface and $0<\tau\leq \sqrt{\frac{8}{7}}$, then a variation that decreases the elastic energy of $\beta$ pulls back via $\pi$ to a variation of $\hopftori$ that decreases its Willmore energy. Moreover, by the Langer and Singer's description of elasticae and \cref{formula: Willmore energy of hopf torus}, we have $\willmore(\Sigma_\tau)<\willmore(\hopftori)$ when the geodesic curvature $k_g^\beta$ is non--constant and $0<\tau\leq \sqrt{\frac{8}{7}}$. For the comparison of the Willmore energies of the Clifford torus and the equator, see \cref{section: Final Remarks}. 
\end{remark}

\begin{proposition}\label{Clifford torus second variation operator}
    For each $\tau>0$, the second variation operator $\mathcal{L}_{\Sigma_\tau}$ of the Clifford torus $\Sigma_\tau$ is given by
    \begin{align*}
        \mathcal{L}_{\Sigma_\tau} = -\Delta^2 - 2(4-\tau^2)\Delta + 16(1-\tau^2) L_\xi^2 -8(2-\tau^2),
    \end{align*}
where $L_\xi$ is the Lie derivative in the direction of $\xi$ and $L_\xi^2 = L_\xi \circ L_\xi$.
\end{proposition}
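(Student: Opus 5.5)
We specialize the general formula of \cref{second variation of surfaces in the Berger sphere} to $\Sigma=\Sigma_\tau$, using the geometric data of the Clifford torus as a Hopf torus. We have $k_g^\beta=0$, so by \cref{hopf tori shape-operator} the shape operator in the orthonormal frame $(\overline{W},\xi)$ is
\begin{equation*}
S=\begin{pmatrix} 0 & -\tau\\ -\tau & 0\end{pmatrix}.
\end{equation*}
Hence $H=0$, $|A|^2=2\tau^2$, $S^2=\tau^2\,\mathrm{Id}$ and $\tr(A^3)=0$. Since $\xi$ is tangent to $\Sigma_\tau$, we have $C=0$ and $T=\xi$. By \cref{Ricci tensor of the berger Sphere}, $\overline{\ric}(N,N)=2(2-\tau^2)$ because $N\perp\xi$. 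Therefore $J_\Sigma=\Delta+2\tau^2+4-2\tau^2=\Delta+4$. Because $H\equiv 0$, the last three lines of the formula in \cref{second variation of surfaces in the Berger sphere} vanish identically, and so do the terms with a factor $C$.

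Next we evaluate the remaining pieces. The first piece is
\begin{equation*}
-(\Delta+4)(\Delta+4-4+2\tau^2)=-(\Delta+4)(\Delta+2\tau^2).
\end{equation*}
The term $4(\tau^2-1)(1+2C^2)J_\Sigma(f)$ becomes $4(\tau^2-1)(\Delta+4)f$.

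The $16(\tau^2-1)$ bracket needs three quantities. The first is $\overline{K}(\xi,N)$. By \cref{Riemann tensor of the Berger sphere}, the sectional curvature of a vertical plane is $\tau^2$, so the term $(C^2-1)\overline K(T,N)$ contributes $-\tau^2 f$. The second is $g(S(\xi),J(\xi))$. We have $S(\xi)=-\tau\overline W$, and $J(\xi)=N\wedge\xi=\pm\overline W$. With the orientation $(\overline W,\xi,\overline N)$ positive, $\overline N\wedge\xi=-\overline W$. This gives $g(S\xi,J\xi)=\tau$, so the term is $2\tau^2 f$. The sign here is exactly what must be checked carefully. The third is $-|S\xi|^2=-\tau^2$. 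These three cancel, leaving only $-\hess f(\xi,\xi)$.

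Since $\xi$ is a geodesic field on $\Sigma_\tau$, we get $\hess f(\xi,\xi)=\xi(\xi f)=L_\xi^2 f$. Indeed, the Hopf fibres are geodesics of $\berger$ lying on the flat torus, so $\nabla_\xi\xi=0$. The bracket therefore contributes $-16(\tau^2-1)L_\xi^2f=16(1-\tau^2)L_\xi^2 f$.

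Expanding, we obtain
\begin{equation*}
-\Delta^2-(4+2\tau^2)\Delta-8\tau^2+4(\tau^2-1)\Delta+16(\tau^2-1).
\end{equation*}
This equals $-\Delta^2-2(4-\tau^2)\Delta+(8\tau^2-16)$, that is, $-8(2-\tau^2)$ as the zeroth-order coefficient, which is the claimed operator.

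The main obstacle is the orientation and sign bookkeeping. This concerns the sign of $J(\xi)$ relative to the convention behind \cref{xi property}, and the sign convention in $\overline K(T,N)$ derived from \cref{Riemann tensor of the Berger sphere}. A wrong sign would leave a spurious zeroth-order term like $\pm 4\tau^2 f$ in the bracket. I would cross-check by requiring that the Killing fields inducing normal variations give kernel elements. The rotations $R_{\mu,\nu}$ give constant-type normal components, and the operator must annihilate the corresponding functions. For instance, at $\tau=1$ the result must reduce to the known operator $-\Delta^2-6\Delta-8=-(\Delta+2)(\Delta+4)$ in the round sphere, which it does.
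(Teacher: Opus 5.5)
Your proposal is correct and is the paper's argument: you substitute $H=0$, $C=0$, $T=\xi$, $|A|^2=2\tau^2$, $\overline{\ric}(N,N)=2(2-\tau^2)$, $\overline{K}(\xi,N)=\tau^2$, $g(S(\xi),J(\xi))=\tau$, $|S(\xi)|^2=\tau^2$ and $\hess f(\xi,\xi)=L_\xi^2 f$ into the general second-variation formula, and your sign bookkeeping and final expansion check out. One correction, which affects only your optional sanity check: the rotations $R_{\mu,\nu}$ preserve $\Sigma_\tau$, so their Killing fields are tangent and have zero normal component (constants are not in the kernel, since $\mathcal{L}_{\Sigma_\tau}(1)=-8(2-\tau^2)$); the Killing kernel elements are $\cos(\theta-\varphi)$ and $\sin(\theta-\varphi)$, which come from the elements of $U(2)$ mixing $z$ and $w$, and your operator annihilates them for every $\tau$, consistent with $\lambda^-_{1,1}\equiv 0$.
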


\begin{proof}
Since $\xi$ is tangent to the minimal surface $\Sigma_\tau = \hopftori \hookrightarrow \berger$ with $k_g^\beta=0$. From \cref{Riemann tensor of the Berger sphere,Ricci tensor of the berger Sphere,xi property,hopf tori shape-operator} we obtain:
\begin{align*}
        C &= 0, \\
        \overline{K}(\xi,\overline{N}) &= \tau^2, \\
        \overline{\ric}(\overline{N},\overline{N}) &= 2(2-\tau^2), \\
        |A|^2 &=  2 \tau^2, \\
        g(S(\xi), J(\xi)) &= \tau, \\
        |S(\xi)|^2 &= \tau^2, \\
        \hess f (\xi,\xi) &= L_\xi^2 f.
    \end{align*}
The result follows from \cref{second variation of surfaces in the Berger sphere}.

\end{proof}

\begin{proposition}\label{proposition: spectrum of the Clifford torus}
The eigenvalues of the second variation operator $\mathcal{L}_{\Sigma_\tau}$ have the form 
\begin{align*}
\lambda_{m,n}^{\pm}(\tau) & = 
(m \,\pm\, n)^4\frac{1}{\tau^4}  + 2(m\,\pm\,n)^2\big((m\,\mp\,n)^2 + 4\big)\frac{1}{\tau^2}  \\
& + \Big((m\,\mp\,n)^4 - 8(m\,\mp\,n)^2 - 14(m\,\pm\,n)^2 + 16\Big) \\
& + \Big(2(m\,\mp\,n)^2 - 8\Big)\tau^2 \,,
\end{align*}
where $m,n \in \N_0 = \N \cup \{0\}$. The functions
\begin{align*}
      \tfrac{1}{\sqrt{2}}(e^{i\theta},e^{i\varphi}) \mapsto \cos(m\theta \pm n\varphi), \, \sin(m\theta \pm n\varphi) \,,
\end{align*}
are eigenfunctions generating the eigenspace corresponding to $\lambda^{\pm}_{m,n}(\tau)$ and span the Sobolev space $H^4(\Sigma_\tau)$.
\end{proposition}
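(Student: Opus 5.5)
The plan is to diagonalize $\mathcal{L}_{\Sigma_\tau}$ directly, starting from the expression in \cref{Clifford torus second variation operator}. Since $\Sigma_\tau$ is a flat torus and $\xi$ is a parallel (Killing, tangent) field on it, $\Delta$ and $L_\xi^2$ are constant-coefficient operators in suitable coordinates. They therefore share the Fourier basis as eigenfunctions, and $\mathcal{L}_{\Sigma_\tau}$, being a polynomial in them, is diagonal in that basis. As the formula indicates, I will read the $\lambda^\pm_{m,n}(\tau)$ as the eigenvalues of $-\mathcal{L}_{\Sigma_\tau}$, which is the sign convention that makes the second variation $-\int f\mathcal{L}_{\Sigma_\tau}f$ positive on stable directions.

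First I compute the induced metric. Parametrize $\Sigma_\tau$ by $(\theta,\varphi)\mapsto \tfrac{1}{\sqrt2}(e^{i\theta},e^{i\varphi})$. Then $V=\partial_\theta+\partial_\varphi$, and in the round metric $|\partial_\theta|^2=|\partial_\varphi|^2=\tfrac12$, $\ip{\partial_\theta}{\partial_\varphi}=0$, $\ip{\partial_\theta}{V}=\ip{\partial_\varphi}{V}=\tfrac12$. Setting $u=\theta+\varphi$ and $v=\theta-\varphi$ gives $\partial_u=\tfrac12 V$ and $\partial_v=\tfrac12(\partial_\theta-\partial_\varphi)$. By \cref{eq: metrica de berger},
\begin{equation*}
g(\partial_u,\partial_u)=\tfrac{\tau^2}{4},\qquad g(\partial_v,\partial_v)=\tfrac14,\qquad g(\partial_u,\partial_v)=0 ,
\end{equation*}
since $\partial_v\perp V$ in the round metric. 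Hence
\begin{equation*}
\Delta=\tfrac{4}{\tau^2}\partial_u^2+4\partial_v^2 ,\qquad \xi=\tfrac{1}{\tau}V=\tfrac{2}{\tau}\partial_u ,\qquad L_\xi^2=\tfrac{4}{\tau^2}\partial_u^2 .
\end{equation*}

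Next I apply these to $f=\cos(m\theta\pm n\varphi)$ or $\sin(m\theta\pm n\varphi)$. Writing $m\theta\pm n\varphi=\tfrac{m\pm n}{2}u+\tfrac{m\mp n}{2}v$, and setting $a=(m\pm n)^2$, $b=(m\mp n)^2$, we get
\begin{equation*}
\Delta f=-\Big(\tfrac{a}{\tau^2}+b\Big)f ,\qquad L_\xi^2 f=-\tfrac{a}{\tau^2}f .
\end{equation*}
Substituting into \cref{Clifford torus second variation operator} gives
\begin{equation*}
-\mathcal{L}_{\Sigma_\tau}f=\Big[\Big(\tfrac{a}{\tau^2}+b\Big)^2-2(4-\tau^2)\Big(\tfrac{a}{\tau^2}+b\Big)+16(1-\tau^2)\tfrac{a}{\tau^2}+8(2-\tau^2)\Big]f .
\end{equation*}
Expanding and collecting powers of $\tau$ yields
\begin{equation*}
\tfrac{a^2}{\tau^4}+\tfrac{2a(b+4)}{\tau^2}+(b^2-8b-14a+16)+(2b-8)\tau^2 ,
\end{equation*}
which is exactly $\lambda^\pm_{m,n}(\tau)$. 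This is routine bookkeeping; the only point needing care is the cross term, where $-8a/\tau^2+16a/\tau^2=8a/\tau^2$, and the constant term, where $2a-16a=-14a$.

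Finally, completeness. The functions $e^{i(m\theta+n\varphi)}$ with $(m,n)\in\Z^2$ form an orthogonal basis of $L^2(\Sigma_\tau)$: the Riemannian measure is a constant multiple of $d\theta\,d\varphi$ because the metric is flat with constant coefficients. Their real and imaginary parts, re-indexed with $m,n\in\N_0$ and a choice of sign $\pm$, are exactly the functions in the statement. Since $\mathcal{L}_{\Sigma_\tau}$ is a self-adjoint elliptic operator of order four that is diagonal in this basis, these functions span its eigenspaces, and their finite linear combinations are dense in $H^4(\Sigma_\tau)$ by the standard Fourier characterization of Sobolev norms on a flat torus.

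I expect the main obstacle to be purely one of conventions: confirming that $V$ restricted to the torus is $\partial_\theta+\partial_\varphi$ with exactly the normalizations above, and fixing the sign convention so that the $\lambda^\pm_{m,n}(\tau)$ are eigenvalues of $-\mathcal{L}_{\Sigma_\tau}$ rather than of $\mathcal{L}_{\Sigma_\tau}$. Once that is settled, the argument reduces to the computation above.
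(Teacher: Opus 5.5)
Your proposal is correct and follows essentially the paper's route. You diagonalize the constant-coefficient operators $\Delta$ and $L_\xi^2$ on the flat torus in the Fourier basis and substitute into \cref{Clifford torus second variation operator}, using the same eigenvalue convention $\mathcal{L}_{\Sigma_\tau}f+\lambda f=0$ that the paper uses. The only difference is cosmetic: your coordinates $u=\theta+\varphi$, $v=\theta-\varphi$ make the induced metric diagonal, so you read off $\Delta f=-\big(\tfrac{(m\pm n)^2}{\tau^2}+(m\mp n)^2\big)f$ directly, whereas the paper writes $\Delta_\tau=\Delta_1+(1-\tau^2)L_\xi^2$ and decomposes the eigenspaces of $\Delta_1$ into those of $L_\xi^2$.
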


\begin{remark}
    There exist $(m,n) \neq (k,l)$ in $\N_0^2$ and $\tau>0$ such that $\lambda^+_{m,n}(\tau) = \lambda^+_{k,l}(\tau) \,.$
\end{remark}
\begin{proof}
It is well known that the functions in the statement span $H^4(\Sigma_\tau)$. To prove the remainder of the statement, denote by $\Delta_\tau$ the Laplacian operator of $\Sigma_\tau$. Notice that $\Delta_1$ is the Laplacian operator of $\Sigma_1$ when equipped with the canonical product metric. We proceed to compute the eigenvalues of $\mathcal{L}_{\Sigma_\tau}$. Let $\R^2\to \Sigma_\tau$ be the universal cover of the Clifford torus given by 
   \begin{equation*}
       (\theta,\varphi) \mapsto \tfrac{1}{\sqrt{2}}(e^{i\theta},e^{i\varphi}).
   \end{equation*}
   In these coordinates,
   \begin{align*}
    [g]_{(\partial_\theta,\partial_\varphi)} &= \frac{1}{4}
       \begin{pmatrix}
           \tau^2+1 & \tau^2-1 \\
           \tau^2-1 & \tau^2+1
       \end{pmatrix}, \\
       [g]^{-1}_{(\partial_\theta,\partial_\varphi)} &= \frac{1}{\tau^2}
       \begin{pmatrix}
           \tau^2+1 & 1-\tau^2 \\
           1-\tau^2 & \tau^2+1
       \end{pmatrix}, \\
       L_\xi &= \frac{1}{\tau}(\partial_\theta + \partial_\varphi), \\
       \Delta_1 &= 2(\partial_\theta^2+ \partial_\varphi^2), \\
       \Delta_\tau &= \frac{\tau^2+1}{\tau^2}(\partial_\theta^2 + \partial_\varphi^2) + 2 \frac{1-\tau^2}{\tau^2}\partial_\theta \partial_\varphi = \Delta_1 +(1-\tau^2)L_\xi^2.
   \end{align*}
   Since $\Delta_1$ and $L_\xi$ have constant coefficients in the base of coordinate vector fields, they commute and $L_\xi^2$ preserves each eigenspace of $\Delta_1$. Computing in local coordinates, it readily follows that $\diverg_{\Sigma_\tau} \xi =0$. Thus, $L_\xi^2$ is symmetric and we can decompose the eigenspaces of $\Delta_1$ in those of $L_\xi^2$. As it is well known, by the separation of variables method, we find the eigenvalues of $\Delta_1$,
   \begin{equation*}
       \Delta_1f +\mu_{m,n} f = 0, \quad  \text{for $f \in E(\Delta_1,\mu_{m,n})$},
   \end{equation*}
   where $\mu_{m,n} = 2(m^2+n^2)$ and $E(\Delta_1,\mu_{m,n})$ is the eigenspace of $\Delta_1$ corresponding to the eigenvalue $\mu_{m,n}$. This eigenspace is generated by the functions $\Sigma_\tau \to \R$, which in coordinates are given by
   \begin{align*}
       \cos(k\theta)\cos(l\varphi), \, \cos(k\theta)\sin(l\varphi), \,
       \sin(k\theta)\cos(l\varphi), \,
       \sin(k\theta)\sin(l\varphi), 
   \end{align*}
   for all $k,l \in \N_0$ such that $k^2+l^2 = m^2 + n^2$. We diagonalize $L_\xi^2$ restricted to $E(\Delta_1,\mu_{m,n})$ and find that
   \begin{equation*}
       L_\xi^2 f + \nu_{m,n}^{\pm} f  = 0, \quad \text{$f \in E(L_\xi^2,\nu_{m,n}^\pm)$},
   \end{equation*}
   where $\nu_{m,n}^\pm = \frac{1}{\tau^2}(m \, \pm \,n)^2.$ The eigenspace $E(L_\xi^2,\nu_{m,n}^+)$ is generated by the functions $\Sigma_\tau \to \R$, which in coordinates are given by
   \begin{align*}
       \cos(k\theta+l\varphi), \, 
        \sin(k\theta +l\varphi),
   \end{align*}
   where $k,l \in \N_0$ satisfy $k+l = m + n$. The eigenspace $E(L_\xi^2,\nu_{m,n}^-)$ is generated by
   \begin{align*}
       \cos(k\theta-l\varphi), \,
        \sin(k\theta -l\varphi),
   \end{align*}
   where $k,l \in \N_0$ satisfy $k-l = m-n$. By \cref{Clifford torus second variation operator}, we are done.

\end{proof}
We now have all the necessary tools to prove \cref{theorem: Stability of the Clifford torus}.
\begin{proof}[Proof of \cref{theorem: Stability of the Clifford torus}] The proof follows from \cref{proposition: spectrum of the Clifford torus} and elementary computations.

\medskip
The eigenvalue $\lambda_{1,1}^{-}(\tau)$ is zero for all $\tau>0$, with multiplicity $2$ when $0<\tau<1$. For $0<\tau<1$, all the eigenvalues $\lambda_{m,n}^{\pm}$, besides $\lambda_{1,1}^{-}$, are positive. When $\tau=1$, only $\lambda_{1,0}^{\pm}=\lambda_{0,1}^{\pm}=\lambda_{1,1}^\pm=0$. Hence, $n_1(\Sigma_1)=8$. Also, $\lambda_{1,0}^+(\tau)<0$ for $\tau>1$. Thus, $\Sigma_\tau$ is stable exactly when $\tau \in (0,1]$.

\medskip
Let
\begin{equation*}
    \mathcal{D} = \{ \tau \in (1,\infty) \mid \lambda_{m,n}^\pm(\tau) = 0 \ \text{for some} \ \lambda_{m,n}^\pm \notin \{ \lambda_{0,0}^+, \ \lambda_{0,0}^-, \ \lambda_{1,1}^- \} \}.
\end{equation*}
The roots of $\lambda^{\pm}_{m,n}$ coincide with those of the polynomial $\tau^4 \lambda^{\pm}_{m,n}$. The claims about $\mathcal{D}$ and the nullity $n_{\sqrt{2}}(\Sigma_{\sqrt{2}})=3$ follow from the next observation. Note that $\lambda_{m,n}^- \ge \lambda_{m,n}^+$ and using polar coordinates $m = r \cos \theta$ and $n = r \sin \theta$, one may verify that $\lambda_{m,n}^+(\sqrt{2})>0$ whenever $m^2+n^2>5$. This observation reduces the analysis to a finite number of eigenvalues. Among all $\lambda_{m,n}^\pm$, only 
$$
\lambda_{0,0}^+=\lambda_{0,0}^-=\lambda_{1,1}^-=0,
$$
at $\tau = \sqrt{2}$.

\medskip
It remains to show  $i_\tau(\Sigma_\tau) \ge \left\lfloor \tau^2 \right\rfloor$ whenever $\tau > 1$. Indeed, for each $m \in \N$, we see that
\begin{align*}
    \lambda_{m,m}^+(\tau)
    &= \frac{16m^4}{\tau^4} + \frac{32m^2}{\tau^2} +(16-56m^2) -8\tau^2 , \\
    \lim_{\tau \to 0^{+}} \lambda_{m,m}^+(\tau) 
    &=  \infty, \\
    \lim_{\tau \to \infty} \lambda_{m,m}^+(\tau) 
    &=-\infty, \\
    \frac{d\lambda_{m,m}^+}{d\tau}(\tau) 
    &=-\frac{64m^4}{\tau^5}-\frac{64m^2}{\tau^3}-16\tau <0, \\
    \lambda_{m,m}^+(\sqrt{m}) 
    &=-40m^2+24m+16<0, \, \text{when $m>1$.}
\end{align*}
The equation $\lambda_{m,m}^+(\tau_m)=0$ has a unique solution $\tau_m>0$. Since $\lambda_{m,m}^+(\sqrt{m})<0$, we obtain $\tau_m<\sqrt{m}$. Therefore, $i_\tau(\Sigma_\tau) \ge \left\lfloor \tau^2 \right\rfloor$.

\end{proof}
\begin{remark}\label{remark: behavior of the sequence tau_m}
    From the proof of \cref{theorem: Stability of the Clifford torus}, we conclude that, for each $m \in \N_0$, there exists a unique $\tau_m>0$ such that $\lambda_{m,m}^+(\tau_m)=0$, where $\lambda_{m,m}^+$ is given in \cref{proposition: spectrum of the Clifford torus}. It turns out that the sequence $\{\tau_m\}_{m \in \N_0}$ is precisely the set of all $K$-invariant bifurcation parameters of the Clifford torus as a Willmore surface, in the sense of \cref{section: Bifurcations of the Clifford torus}. We remark that 
   \begin{gather}\label{Clifford bifurcations parameters}
      \begin{cases}
         \tau_0 = \sqrt{2}, \ \tau_1 = 1 ,\\ 
         \tau_3 < \sqrt{2} < \tau_4, \\
         \tau_n < \tau_m \quad \text{for} \quad 1 \leq n < m, \\ 
         \tau_m \nearrow +\infty.
      \end{cases}
   \end{gather}
\end{remark}
Even though we do not have a closed formula for $i_\tau(\Sigma_\tau)$, \cref{proposition: spectrum of the Clifford torus} allows us to compute, for example, the index of $\Sigma_\tau$ when $\tau = \sqrt{2}$. Indeed, by the same observation as in the above proof, since $\lambda_{m,n}^+(\sqrt{2}) < 0$ only for $0 \leq m,n \leq 3$ and $(m,n) \neq (0,0),(0,3),(3,0)$, we conclude that $i_\tau(\Sigma_\tau) = 34$.
\begin{proposition}\label{proposition: second variation operator irmão paralelo}
    For each $\tau>\sqrt{2}$, the second variation operator of the Willmore surface $\widetilde{\Sigma}_\tau$ is given by
    \begin{equation*}
       \mathcal{L}_{\widetilde{\Sigma}_\tau} = -\Delta^2 - 4\Delta  + 16(1-\tau^2)L_{\xi}^2 -2H g(\hess({\cdot}),A) - 16(2-\tau^2)(1-\tau^2) ,
    \end{equation*}
\end{proposition}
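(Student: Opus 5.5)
The plan is to specialize \cref{second variation of surfaces in the Berger sphere} to $\widetilde{\Sigma}_\tau = \Sigma^{\tilde\beta}$, exactly as \cref{Clifford torus second variation operator} did for the Clifford torus. The only new feature is that now $k := k_g^{\tilde\beta} = 2\sqrt{\tau^2-2}$ is a nonzero constant. This is the geodesic curvature of the circle of height $c(\tau)$, by \cref{remark: trivial bifurcation at sqrt 2}.

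\textbf{Pointwise invariants.} By \cref{hopf tori shape-operator}, in the frame $(\overline{W},\xi)$ we have
$S=\begin{pmatrix} -k & -\tau\\ -\tau & 0\end{pmatrix}$. From this I would read off:
\begin{itemize}
\item $H=-k$ is constant, so $\nabla H=0$; this kills the terms $|\nabla H|^2$, $\diverg(S(\nabla H))$, $g(\nabla H,\nabla f)$ and $g(S(\nabla H),\nabla f)$;
\item $|A|^2=k^2+2\tau^2$;
\item $\tr(A^3)=-k^3-3k\tau^2$, computed from $S^2$;
\item $C=0$ and $T=\xi$, since $\xi$ is tangent, so every term carrying a factor $C$ drops.
\end{itemize}

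\textbf{Ambient curvature.} From \cref{Riemann tensor of the Berger sphere,Ricci tensor of the berger Sphere} I would record $\overline{\ric}(\overline{N},\overline{N})=2(2-\tau^2)$, $\overline{K}(\xi,\overline{N})=\tau^2$ and $\overline{K}(\overline{W},\overline{N})=4-3\tau^2$. Since $\overline{R}_N$ is diagonal in $(\overline{W},\xi)$, this gives $g(\overline{R}_N,A)=-k(4-3\tau^2)$. As in the Clifford case, $g(S(\xi),J(\xi))=\tau$ and $|S(\xi)|^2=\tau^2$. The fibres are geodesics of $\widetilde{\Sigma}_\tau$ (they are totally geodesic in $\berger$), so $\hess f(\xi,\xi)=L_\xi^2 f$.

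\textbf{Jacobi operator.} With $k^2=4\tau^2-8$,
\[
J_{\widetilde\Sigma_\tau}=\Delta+k^2+2\tau^2+2(2-\tau^2)=\Delta+4(\tau^2-1).
\]

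\textbf{Assembling the operator.} I would substitute into \cref{second variation of surfaces in the Berger sphere} and expand. The term $16(\tau^2-1)\big(f((C^2-1)\overline{K}+2\tau g(S(T),J(T))-|S(T)|^2)-\hess f(T,T)\big)$ becomes $16(\tau^2-1)\big(f(-\tau^2+2\tau^2-\tau^2)-L_\xi^2f\big)=16(1-\tau^2)L_\xi^2 f$. The term $-2H g(\hess f,A)$ survives verbatim. Then I would expand
\[
-J(J-2(2-\tau^2))+4(\tau^2-1)J+\tfrac32 H^2 J
\]
as a polynomial in $\Delta$, using $J=\Delta+4(\tau^2-1)$ and $H^2=k^2=4\tau^2-8$. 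Finally I would add the zeroth-order terms
\[
-2H g(\overline{R}_N,A)-2H\tr(A^3)=-2k^2(4-3\tau^2)-2k^2(k^2+3\tau^2).
\]

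\textbf{Main obstacle.} The difficulty is purely bookkeeping: the constants must collapse to $-4\Delta$ and to $-16(2-\tau^2)(1-\tau^2)$. I expect the $\Delta$-coefficient to come out as $-2\cdot4(\tau^2-1)+2(2-\tau^2)+4(\tau^2-1)+\tfrac32(4\tau^2-8)=-4$, which is a good consistency check. The constant term needs care with the signs of $H$ and of $\tr(A^3)$. If it does not match, I would first recheck the sign conventions: $H=\tr A$ with $A(X,Y)=\langle\overline{\nabla}_XN,Y\rangle$, and $J(\xi)=N\wedge\xi$ in the frame orientation fixed before \cref{hopf tori shape-operator}. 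As a sanity check I would set $k=0$, $\tau=\sqrt2$, where $\widetilde{\Sigma}_{\sqrt2}=\Sigma_{\sqrt2}$; the result should then agree with \cref{Clifford torus second variation operator}. Indeed both give $-\Delta^2-4\Delta-16L_\xi^2$, and the constant $-16(2-\tau^2)(1-\tau^2)$ vanishes at $\tau=\sqrt2$.
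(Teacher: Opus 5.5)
Your proposal is correct and is essentially the paper's proof. The paper also substitutes $C=0$, $\overline{K}(\xi,N)=\tau^2$, $\overline{\ric}(N,N)=2(2-\tau^2)$, $|A|^2=6\tau^2-8$, $g(S(\xi),J(\xi))=\tau$, $|S(\xi)|^2=\tau^2$, $\hess f(\xi,\xi)=L_\xi^2 f$ and $g(\overline{R}_N,A)=(4-3\tau^2)H$ into the general second-variation formula. The only difference is that it uses the identity $2\tr(A^3)=H(|A|^2+2|\traceless{A}|^2)$, where you compute $\tr(A^3)$ directly from $S$. The constant term you left to check does collapse as required, since $8(\tau^2-1)(2-\tau^2)+24(\tau^2-1)(\tau^2-2)-8(\tau^2-2)(4-3\tau^2)-8(\tau^2-2)(7\tau^2-8)=-16(\tau^2-1)(\tau^2-2)$.
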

\begin{proof}
Since $\xi$ is tangent to the CMC surface $\Sigma_\tau \hookrightarrow \berger$, from \cref{Riemann tensor of the Berger sphere,Ricci tensor of the berger Sphere,xi property,hopf tori shape-operator}
we obtain:
\begin{align*}
        C &= 0, \\
        \overline{K}(\xi,N) &= \tau^2, \\
        \overline{\ric}(N,N) &= 2(2-\tau^2), \\
        |A|^2 &=  6\tau^2 - 8, \\
        g(S(\xi), J(\xi)) &= \tau, \\
        |S(\xi)|^2 &= \tau^2, \\
        \hess f (\xi,\xi) &= L_\xi^2 f, \\
        g(\overline{R}_{N},A) &= (4-3\tau^2)H.
\end{align*}
The result follows from the above equations, the algebraic identity
\begin{equation*}
    2\tr(A^3)=H(|A|^2+2|\traceless A|^2),
\end{equation*}
and \cref{second variation of surfaces in the Berger sphere}.

\end{proof}

\begin{proposition}\label{proposition: spectrum of the irmao paralelo}
The eigenvalues of the second variation operator $\mathcal{L}_{\widetilde{\Sigma}_\tau}$ have the form 
\begin{align*}
\tilde{\lambda}_{m,n}^{\pm}(\tau) =&
\big(\frac{m^2}{a^2}+\frac{n^2}{b^2} + \frac{1-\tau^2}{\tau^2}(m \pm n)^2 \big)^2 - 4\big(\frac{m^2}{a^2}+\frac{n^2}{b^2} + \frac{1-\tau^2}{\tau^2}(m \pm n)^2\big) \\
&+ 16\frac{1-\tau^2}{\tau^2}(m \pm n)^2 + 2H (\frac{b}{a^3}m^2 - \frac{a}{b^3}n^2) - 16(\tau^2-2)(1-\tau^2),
\end{align*}
where $m,n \in \N_0$, $H = 2\sqrt{\tau^2-2}$, $a = \sqrt{\frac{1}{2} + c}$, $b = \sqrt{\frac{1}{2} - c}$ and $c = \frac{1}{2} \sqrt{\frac{\tau^2-2}{\tau^2-1}}$. The functions
\begin{align*}
      (ae^{i\theta},be^{i\varphi}) \mapsto \cos(m\theta \pm n\varphi), \, \sin(m\theta \pm n\varphi),
\end{align*}
are eigenfunctions generating the eigenspace corresponding to $\tilde{\lambda}^{\pm}_{m,n}(\tau)$ and span the Sobolev space $H^4(\widetilde{\Sigma}_\tau)$.
\end{proposition}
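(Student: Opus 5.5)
We follow the proof of \cref{proposition: spectrum of the Clifford torus}: write every term of $\mathcal{L}_{\widetilde{\Sigma}_\tau}$ from \cref{proposition: second variation operator irmão paralelo} as a constant-coefficient differential operator in flat coordinates on the universal cover, and then diagonalize on Fourier modes.

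\textbf{Coordinates.} Parametrize $\widetilde{\Sigma}_\tau$ by $(\theta,\varphi)\mapsto (ae^{i\theta},be^{i\varphi})$. The induced Berger metric has constant coefficients. By \eqref{eq: metrica de berger}, since $V=(iae^{i\theta},ibe^{i\varphi})$,
\[
g(\partial_\theta,\partial_\theta)=a^2+(\tau^2-1)a^4,\quad g(\partial_\varphi,\partial_\varphi)=b^2+(\tau^2-1)b^4,\quad g(\partial_\theta,\partial_\varphi)=(\tau^2-1)a^2b^2 .
\]
Moreover $\xi=\frac1\tau(\partial_\theta+\partial_\varphi)$, so $L_\xi=\frac1\tau(\partial_\theta+\partial_\varphi)$. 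Inverting this $2\times2$ matrix, and using $a^2+b^2=1$, I expect the same structure as in the Clifford case:
\[
\Delta=\frac{1}{a^2}\partial_\theta^2+\frac1{b^2}\partial_\varphi^2+(1-\tau^2)L_\xi^2 .
\]
The key is that the round part of the metric is $a^2d\theta^2+b^2d\varphi^2$, and the Berger deformation adds $(\tau^2-1)$ times the square of the $1$-form dual to $V$. Hence the inverse metric is the round inverse plus $\frac{1-\tau^2}{\tau^2}\,\tau^2\xi\otimes\xi$, which I will verify via Sherman--Morrison. In particular $\Delta$ and $L_\xi$ have constant coefficients and commute, and $\operatorname{div}\xi=0$.

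\textbf{The term $g(\hess f,A)$.} Since the coefficients are constant, the Christoffel symbols of $\widetilde{\Sigma}_\tau$ vanish in these coordinates, so $\hess f=\partial_i\partial_j f\,d x^i dx^j$. It remains to compute $A$ in the coordinate basis. By \cref{hopf tori shape-operator}, with $k_g=H=2\sqrt{\tau^2-2}$, the matrix of $S$ in the frame $(\overline W,\xi)$ is known. Alternatively, I can compute $A_{ij}=-g(\overline N,\nablaAmbient_{\partial_i}\partial_j)$ directly using \eqref{eq: conexão da Esfera de Berger}. Using $\nabla^{can}_{\partial_\theta}\partial_\theta=-a^2(ae^{i\theta},0)^\top$ and similar expressions, I will obtain $g(\hess f,A)=\alpha\partial_\theta^2 f+\beta\partial_\varphi^2 f+\gamma\partial_\theta\partial_\varphi f$ after raising indices. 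The claimed formula suggests that the mixed terms combine so that $g(\hess f,A)$ acts on $\cos(m\theta\pm n\varphi)$ as multiplication by $-(\frac{b}{a^3}m^2-\frac{a}{b^3}n^2)$. Note that the factor $\pm 2mn$ cannot appear in that expression, so I must check that the $\xi\xi$ and $\xi\overline W$ components produce contributions that cancel, or that the $\gamma$ contribution vanishes. This bookkeeping is the main obstacle. I will carry it out using the relations $a^2-b^2=2c$, $ab=\frac{1}{2\sqrt{\tau^2-1}}$, and $H=2\sqrt{\tau^2-2}$, and then simplify.

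\textbf{Conclusion.} Every term of $\mathcal{L}_{\widetilde{\Sigma}_\tau}$ is a constant-coefficient real operator that is even in derivatives. Hence each $\cos(m\theta\pm n\varphi)$ and $\sin(m\theta\pm n\varphi)$ is an eigenfunction, with
\[
-\Delta\mapsto \frac{m^2}{a^2}+\frac{n^2}{b^2}+\frac{1-\tau^2}{\tau^2}(m\pm n)^2,\qquad -L_\xi^2\mapsto \frac{(m\pm n)^2}{\tau^2}.
\]
Substituting these into $-\Delta^2-4\Delta+16(1-\tau^2)L_\xi^2-2Hg(\hess\cdot,A)-16(2-\tau^2)(1-\tau^2)$ should give $\tilde\lambda^\pm_{m,n}$, up to the sign convention relating eigenvalues of $\mathcal{L}$, which I will fix consistently with \cref{proposition: spectrum of the Clifford torus}. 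Completeness is the classical Fourier basis statement: the exponentials $e^{i(m\theta+n\varphi)}$, $(m,n)\in\Z^2$, span $L^2$ and are dense in $H^4$ of the flat torus.
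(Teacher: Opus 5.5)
Your route is the paper's: constant-coefficient metric in $(\theta,\varphi)$, $\Delta=\Delta_1+(1-\tau^2)L_\xi^2$, vanishing Christoffel symbols, $A$ from \cref{hopf tori shape-operator}, and Fourier diagonalization. Your Sherman--Morrison argument is correct, since $|V|=1$ in the round metric, and the diagonalization and completeness parts are fine.

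The gap is the only step that is new compared with the Clifford case: the coordinate form of $g(\hess f,A)$. You announce it but do not carry it out, and the conventions you set up for it are wrong. In \cref{hopf tori shape-operator} one has $H=\tr S=-k_g$, not $H=k_g$. Moreover, the sign of $k_g$ is tied to the direction of $\overline W$. This is not cosmetic: it decides both whether the $\partial_\theta\partial_\varphi$ term cancels and the sign of the $2H(\cdots)$ term.

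Concretely, put $r=ab$, so $r^2=\tfrac14-c^2=\frac{1}{4(\tau^2-1)}$ and $c/r=\sqrt{\tau^2-2}$. Then $\overline W=\frac1r(b^2\partial_\theta-a^2\partial_\varphi)$ is horizontal and unit, with $\partial_\theta=r\overline W+a^2\tau\xi$ and $\partial_\varphi=-r\overline W+b^2\tau\xi$. For this $\overline W$ the lemma holds with $k_g=-2c/r=-2\sqrt{\tau^2-2}$, i.e. $H=2\sqrt{\tau^2-2}$ for the normal $(-be^{i\theta},ae^{i\varphi})$. You can confirm this directly from $A_{ij}=-\langle N,\nablaAmbient_{\partial_i}\partial_j\rangle$, which gives e.g. $A_{\theta\theta}=-ab\,(1+2(\tau^2-1)a^2)$. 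Note that $\nabla^{can}_{\partial_\theta}\partial_\theta=-(ae^{i\theta},0)^\top=ab\,(-be^{i\theta},ae^{i\varphi})$, without your factor $a^2$. Since $A(\xi,\xi)=0$,
\[
g(\hess f,A)=-k_g\hess f(\overline W,\overline W)-2\tau\hess f(\overline W,\xi)=\frac{2c}{r^3}\big(b^4f_{\theta\theta}-2r^2f_{\theta\varphi}+a^4f_{\varphi\varphi}\big)-\frac{2}{r}\big(b^2f_{\theta\theta}+(b^2-a^2)f_{\theta\varphi}-a^2f_{\varphi\varphi}\big).
\]
The mixed coefficients of the two pieces are $-4c/r$ and $+4c/r$. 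So the cancellation is between the $\overline W\overline W$ and $\overline W\xi$ parts, not the $\xi\xi$ part. With $k_g=+2c/r$ for the same $\overline W$, the mixed coefficient would be $8c/r\neq0$. The remaining coefficients are $\frac{2b^4(c-a^2)}{r^3}=-\frac{b}{a^3}$ and $\frac{2a^4(c+b^2)}{r^3}=\frac{a}{b^3}$.

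Thus $g(\hess f,A)=-\frac{b}{a^3}f_{\theta\theta}+\frac{a}{b^3}f_{\varphi\varphi}$, exactly as in the paper. It acts on $\cos(m\theta\pm n\varphi)$ and $\sin(m\theta\pm n\varphi)$ by $+\big(\frac{b}{a^3}m^2-\frac{a}{b^3}n^2\big)$, the opposite of the sign you anticipated.

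The eigenvalue convention is forced to be $\mathcal{L}f+\lambda f=0$ by the $+(\cdots)^2$ term. Then $-2Hg(\hess\cdot,A)$ contributes $+2H\big(\frac{b}{a^3}m^2-\frac{a}{b^3}n^2\big)$, as claimed. Your multiplier together with $H=+2\sqrt{\tau^2-2}$ would give this term the wrong sign. No choice of eigenvalue convention repairs that without also flipping all the other terms.
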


\begin{proof}
     We have that $\widetilde{\Sigma}_\tau = \Sphere^1(a) \times \Sphere^1(b)$. To verify the computations below it will be useful to have in mind the relations given by \cref{eq: constantes do irmao bifurcado},
    \begin{align*}
      \begin{cases}
        a^2+b^2 = 1, \\
        a^2-b^2 = 2c.
      \end{cases}
    \end{align*}
    Denote by $\Delta_\tau$ the Laplacian operator of $\widetilde{\Sigma}_\tau$. Notice that $\Delta_1$ is the Laplacian operator of $\widetilde{\Sigma}_1$ when equipped with the canonical product metric. Consider the universal cover
    \begin{equation*}
         \R^2 \to \widetilde{\Sigma}_\tau, \quad (\theta,\varphi) \mapsto (ae^{i\theta},be^{i\varphi}).
    \end{equation*}
    In these coordinates, 
    \begin{align*}
        \xi(\theta,\varphi) &= \frac{1}{\tau}(\partial_\theta + \partial_\varphi), \\
        [g]_{(\partial_\theta,\partial_\varphi)} &=
        \begin{pmatrix}
           a^2(b^2 + a^2\tau^2) & a^2b^2(\tau^2-1) \\
           a^2b^2(\tau^2-1) & b^2(a^2+b^2\tau^2)
        \end{pmatrix}, \\
        [g]_{(\partial_\theta,\partial_\varphi)}^{-1} &= \frac{1}{r^2\tau^2} 
    \begin{pmatrix}
       r^2 + b^4\tau^2 &  r^2(1-\tau^2) \\
       r^2(1-\tau^2)   &  r^2+a^4\tau^2)
    \end{pmatrix}, \\
    \Delta_1 &= \frac{1}{a^2}\partial_\theta^2 + \frac{1}{b^2}\partial_\varphi^2, \\
    \Delta_\tau &= \Delta_1 + (1-\tau^2)L_\xi^2,
    \end{align*}
where $r = \sqrt{\frac{1}{4} - c^2}$. Since $\nabla_{\partial_\theta}\partial_\varphi = 0$ and $ \nabla_{\partial_\varphi}\partial_\theta = 0$, we obtain
\begin{equation*}
    [\hess (\cdot)]_{(\partial_\theta,\partial_\varphi)} = 
    \begin{pmatrix}
        \partial_\theta^2  & \partial_\theta\partial_\varphi  \\
        \partial_\theta\partial_\varphi  & \partial_\varphi^2 
    \end{pmatrix}.
\end{equation*}
In order to compute the second fundamental form $A$ in coordinates, let $\tilde{\beta}$ parametrize the circle $\Sphere^2\left(\frac{1}{2}\right) \cap (\C \times \{\frac{1}{2}\sqrt{\frac{\tau^2-2}{\tau^2-1}}\})$ with orientation such that $k_g^{\tilde{\beta}}= - 2\sqrt{\tau^2-2}$. Let $\overline{W} \in \campos(\widetilde{\Sigma}_\tau)$ be as in the beginning of \cref{section: hopf tori}. We have
\begin{align*}
    \overline{W} &= \frac{b^2}{r} \partial_\theta - \frac{a^2}{r}\partial_\varphi, \\
    \partial_\theta &= r\overline{W} + a^2\tau \xi, \\
    \partial_\varphi &= -r \overline{W} + b^2\tau \xi.
\end{align*}
By \cref{hopf tori shape-operator}, we get
\begin{equation*}
    [A]_{(\partial_\theta,\partial_\varphi)} = 2r
    \begin{pmatrix}
        c-a^2\tau^2 & c(\tau^2-1) \\
        c(\tau^2-1) & c+b^2\tau^2
    \end{pmatrix}.
\end{equation*}
Hence,
\begin{equation*}
    g_\tau(\hess (\cdot), A) = -\frac{b}{a^3} \partial_\theta^2 + \frac{a}{b^3} \partial_\varphi^2.
\end{equation*}
By the same argument as in the proof of \cref{proposition: spectrum of the Clifford torus}, we can decompose the eigenspaces of $\Delta_1$ in those of $L_\xi^2$. By the separation of variables method, we find the eigenvalues of $\Delta_1$.
\begin{equation*}
    \Delta_1f +\mu_{m,n} f = 0, \quad  \text{for $f \in E(\Delta_1,\mu_{m,n})$,}
\end{equation*}
where $\mu_{m,n} = \frac{m^2}{a^2}+\frac{n^2}{b^2}$ and $E(\Delta_1,\mu_{m,n})$ is the eigenspace of $\Delta_1$ corresponding to the eigenvalue $\mu_{m,n}$. This eigenspace is generated by the functions $\widetilde{\Sigma}_\tau \to \R$ which in coordinates are given by
\begin{align*}
    \cos(k\theta)\cos(l\varphi), \, \cos(k\theta)\sin(l\varphi), \, \sin(k\theta)\cos(l\varphi), \, \sin(k\theta)\sin(l\varphi), 
\end{align*}
for all $k,l \in \N_0$ such that $\frac{k^2}{a^2}+\frac{l^2}{b^2} = \mu_{m,n}$. We diagonalize $L_\xi^2$ restricted to $E(\Delta_1,\mu_{m,n})$ and find that
\begin{equation*}
    L_\xi^2 f + \nu_{m,n}^{\pm} f  = 0, \quad \text{$f \in E(L_\xi^2,\nu_{m,n}^\pm)$},
\end{equation*}
where $\nu_{m,n}^\pm = \frac{1}{\tau^2} (m \, \pm \,n)^2.$ The eigenspace $E(L_\xi^2,\nu_{m,n}^+)$ is generated by the functions $\widetilde{\Sigma}_\tau \to \R$ which in coordinates are given by
\begin{align*}
   \cos(k\theta+l\varphi), \, \sin(k\theta +l\varphi),
\end{align*}
where $k,l \in \N_0$ satisfy $\frac{1}{\tau^2}(k^2+l^2)= \nu_{m,n}^+$. The eigenspace $E(L_\xi^2,\nu_{m,n}^-)$ is generated by
\begin{align*}
   \cos(k\theta-l\varphi), \,
   \sin(k\theta -l\varphi),
\end{align*}
where $k,l \in \N_0$ satisfy $\frac{1}{\tau^2}(k^2-l^2)= \nu_{m,n}^-$. It readily follows from the coordinate expression of $g_\tau(\hess (\cdot), A)$ that 
\begin{equation*}
    g_\tau(\hess (f), A) + \rho_{m,n}f = 0, \quad f \in E(g_\tau(\hess (\cdot), A), \rho_{m,n}),
\end{equation*}
where $\rho_{m,n} = -\frac{b}{a^3}m^2 +\frac{a}{b^3}n^2$. The eigenspace $E(g_\tau(\hess (\cdot), A), \rho_{m,n})$ is generated by the functions $\widetilde{\Sigma}_\tau \to \R$ which in coordinates are given by
\begin{align*}
   \cos(k\theta+l\varphi), \, \sin(k\theta +l\varphi), \cos(k\theta-l\varphi), \, \sin(k\theta -l\varphi), 
\end{align*}
where $k,l \in \N_0$ satisfy $-\frac{b}{a^3}k^2 +\frac{a}{b^3}l^2 = \rho_{m,n}$. By the above equations and \cref{proposition: second variation operator irmão paralelo}, we are done.

\end{proof}

\begin{proof}[Proof of \cref{theorem: instabilidade do irmao paralelo}]
By \cref{proposition: spectrum of the irmao paralelo} and a elementary simplification, we verify that 
\begin{equation*}
    \tilde{\lambda}_{1,1}^+(\tau) = -\frac{16}{\tau^4}(\tau^2-1)(1+2\tau^4)<0.
\end{equation*}
Hence, $\widetilde{\Sigma}_\tau$ is unstable.
By \cref{eq: constantes do irmao bifurcado}, we obtain
\begin{align*}
    c(\tau) &= \frac{1}{2} -\frac{1}{4\tau^2} + O(\tau^{-4}), \\
    a(\tau) &= 1 - \frac{1}{8\tau^2} + O(\tau^{-4}), \\
    b(\tau) &= \frac{1}{2\tau} + O(\tau^{-3}), \\
    H(\tau) &= 2\tau - \frac{2}{\tau} + O(\tau^{-3}),
\end{align*}
as $\tau \nearrow +\infty$. Substituting in the expression of $\tilde{\lambda}_{m,1}^+$ and using 
\begin{align*}
 \begin{cases}
    a^2+b^2=1, \\
    a^2-b^2=2c, \\
    a^2b^2= \frac{1}{4(\tau^2-1)},
 \end{cases}
\end{align*}
we see that for each $m \in \N$
\begin{equation*}
    \lambda_{m,1}^+(\tau) = -16(m+1)\tau^2 + O(1).
\end{equation*}
Thus, $i_\tau(\widetilde{\Sigma}_\tau) \nearrow +\infty$ as $\tau \nearrow +\infty.$

\end{proof}
 \section{Bifurcations of the Clifford torus and the bifurcated brother} \label{section: Bifurcations of the Clifford torus}

Before stating the main theorem, we set up the abstract framework in which the concrete cases of the Clifford torus and the bifurcated brother will be treated. Denote $\T^2 = \Sphere^1 \times \Sphere^1 \subset \C^2$. Let $I \subset \R$ be an interval, $v^{\tau}: \T^2 \to \berger$, $\tau \in I$, be a smooth family of embedded Willmore tori, $X_\tau = \ima(v^\tau)$ endowed with the metric induced by $\berger$, and $n_\tau$ the unit normal vector field along $v^\tau$. Denote the group of symmetries of $X_\tau$ by
$$
\fix(X_\tau)=\{ \psi \in \iso (\berger): \psi(X_\tau)=X_\tau \}.
$$
Suppose $G$ is a subgroup of $\fix(X_\tau)$, for every $\tau \in I$, that preserves the orientation of $\berger$. Each symmetry $k \in G$ and each parameter $\tau \in I$ defines a diffeomorphism $\varphi^{v,\tau}_k \colon \T^2 \to \T^2$ by the equation
\begin{align*}
   k \circ v^\tau = v^\tau \circ \ \varphi^{v,\tau}_k.
\end{align*}
Assume that $\varphi^{v,\tau}_k = \varphi^v_k$ does not depend on $\tau$. A function $f \colon \T^2 \to \R$ is said to be $G$--\emph{invariant} with respect to $v$ if
$$
\forall k \in G, \quad f \circ \varphi^v_k = f.
$$
Given a function $f$ on $\T^2$, we define the parametrization of the graph of $f$ over $X_\tau$ by 
\begin{align*}
   v_f^\tau \colon \T^2 \to \berger, \quad p \mapsto \exp_{v^\tau(p)}^\tau\big(f(p)n_\tau(p)\big),
\end{align*}
where $\exp^\tau$ denotes the exponential map of $\berger$. As isometries commute with the exponential map, for each $k\in G$, one may verify that 
$$
k \circ v^\tau_f = v^\tau_f \circ \varphi_k^v,
$$
whenever $f$ is $G$--invariant with respect to $v$, since $k$ preserves orientation by assumption. In particular, $G$ is also a subgroup of the symmetry group $\fix(\ima(v^\tau_f))$. In that case, we say that $v^\tau_f$ is G--\emph{invariant}. 

\medskip
Fix $\alpha \in (0,1)$. For each $l \in \N_0 = \N \cup\{0\}$, we denote by $C^{l,\alpha}_{v,G}(\T^2)$ the subspace of $G$--invariant functions in the Hölder space $C^{l,\alpha}(\T^2)$ with respect to $v$. This subspace is closed, thus it is also a Banach space.
\begin{definition}\label{definition: invariant bifurcation}
   We say that $\tau$ is a $G$--\emph{invariant bifurcation parameter} for $X_\tau$ as a Willmore surface if there exist $\delta>0$, an open neighbourhood $\mathcal{U} \subset (0,+\infty) \times C^{4,\alpha}_{v,G}(\T^2)$ of $(\tau,0)$, and a continuous path 
    \begin{equation*}
        \gamma \colon (-\delta, \delta) \to \mathcal{U}, \quad \gamma(t) = \big(\tau(t),f(t)\big),
    \end{equation*} 
    such that
    \begin{enumerate}
        \item[(I)] $\tau(0) = \tau$, $f(0)=0 \in C^{4,\alpha}_{v,G}(\T^2)$, and $f(t) \ne 0$ whenever $t \ne 0$.
        \item[(II)] For every $t \in (-\delta,\delta)$, the graphical embedding $v^{\tau(t)}_{f(t)} \colon \T^2 \to \Sphere^3_{\tau(t)}$, is a $G$--invariant Willmore surface.
    \end{enumerate}
    We say that $\gamma$ is \emph{parametrizing the bifurcation} and the trace $\ima(\gamma)$ is a \emph{bifurcation} at $\tau$.
\end{definition}

\medskip
Now we move to the concrete cases. Let 
$$
x,y^\tau \colon \T^2 \to \berger
$$
be parametrizations of the Clifford torus and of the bifurcated brother, respectively, given by 
$$
x(z,w)=\tfrac{1}{\sqrt{2}}(z,w), \quad \text{and} \quad y^\tau(z,w)=(a(\tau)z,b(\tau)w),
$$
where the expressions of $a(\tau)$ and $b(\tau)$ were given in \cref{eq: constantes do irmao bifurcado}. For $\mu,\nu \in \R$, define the maps $$
R_{\mu,\nu}  ,\ \swap ,\ \conj \colon \berger \to \berger,
$$ 
by
$$
R_{\mu,\nu}(z,w) = (e^{i\mu}z, e^{i\nu}w),\quad \swap(z,w)=(w,z), \ \quad \text{and} \quad \conj(z,w) = (\overline{z},\overline{w}). 
$$
We set 
$$
K = \operatorname{span}\{R_{\theta,-\theta}, \conj : \theta \in \R\} \subset \iso(\berger).
$$
For every $\tau>0$, it is straightforward to verify that $K$ is a subgroup of both $\fix(\Sigma_\tau)$ and $\fix(\widetilde\Sigma_\tau)$. Also, for every $k \in K$, the maps $\varphi^{y,\tau}_k$ and $\varphi^x_k$ coincide for $y^\tau$ and $x$, being independent of $\tau$. Henceforth, we denote both maps by $\varphi_k$. More explicitly, 
\begin{align}\label{eq: K-invariant functions on T2}
    \varphi_{R_{\theta,-\theta}}(z,w) = (e^{\theta}z,e^{-\theta}w), \quad \text{and} \quad \varphi_{\conj}(z,w)=(\overline{z},\overline{w}).
\end{align}
for every $(z,w) \in \T^2$. Moreover, we may set
\begin{equation*}
    C^{l,\alpha}_K(\T^2) = C^{l,\alpha}_{x,K}(\T^2) = C^{l,\alpha}_{y,K}(\T^2).
\end{equation*}
Let 
\begin{align*}
  N(z,w)=\tfrac{1}{\sqrt{2}}(-z,w), \quad \widetilde{N}_\tau(z,w)= (-b(\tau)z,a(\tau)w), \quad (z,w) \in \T^2,
\end{align*} 
define the unit normal vector fields along $x$ and $y^\tau$, respectively, in the round sphere $\Sphere^3_1$. By \cref{eq: metrica de berger}, since $N$ and $\widetilde{N}_\tau$ are orthogonal to $\xi$ with respect to the round metric, they are also the unit normal vector fields along $x$ and $y^\tau$ in $\berger$. Moreover, by \cref{eq: conexão da Esfera de Berger}, the geodesics of $\berger$ with velocities parallel to $N$ or to $\widetilde{N}$ coincide with those of $\Sphere^3_1$. More explicitly,
\begin{align}
    x_f(p) &= \exp_{x(p)}^\tau(f(p)N(p)) = \exp_{x(p)}^1(f(p)N(p)), \label{eq: parametrização grafica do Clifford} \\
    y_f^\tau(p) &= \exp_{y^\tau(p)}^\tau(f(p)\widetilde{N}_\tau(p)) = \exp_{y^\tau(p)}^1(f(p)\widetilde{N}_\tau(p)). \label{eq: parametrização grafica do BB} 
\end{align}

Let $\tau_m$ be the unique positive solution to the equation $\lambda_{m,m}^+(\tau)=0$ (see \cref{remark: behavior of the sequence tau_m}).

\begin{theorem}[Bifurcations of the Clifford torus]\label{theorem: Bifurcations of the Clifford Torus}
   The sequence $\{\tau_m\}_{m \in \N_0}$ is precisely the set of all $K$--invariant bifurcation parameter of the Clifford Torus as a Willmore surface. Moreover, we may parametrize the bifurcation at $\tau_m$ by a smooth path
    \begin{equation*}
        \gamma_m \colon (- \delta_m, \delta_m) \to \mathcal{U}_m, \quad \gamma_m(t)=(\tau_m(t),f_m(t)),
    \end{equation*}
    in a neighbourhood $\mathcal{U}_m \subset (0,+\infty) \times C^{4,\alpha}_K(\T^2)$ of $(\tau_m,0)$ such that the following properties hold.
    \begin{enumerate} 
        \item[$(a)$] If $(\tau,f) \in \mathcal{U}_m$ yields a Willmore surface $x_f \colon \T^2 \to \berger$, then $f=0$ or $(\tau,f) \in \ima(\gamma_m)$.
        
        \item[$(b)$] The variation of $x$ given by 
        $$F_m \colon (-\delta_m,\delta_m) \times \T^2 \to \Sphere^3, \quad F_m(t,p)= x_{f_m(t)}(p),
        $$ 
        has variational vector field $$
        \left.\frac{\partial}{\partial t}\right|_{t=0}F_m \big(t,(e^{i\theta},e^{i\varphi})\big) = \cos\big(m(\theta+\varphi)\big)N(e^{i\theta},e^{i\varphi}).
        $$     
    \end{enumerate}
    Finally, for $m \geq 2$, the following extra property holds:
    \begin{enumerate}
      \item[$(c)$] For every $t\in(-\delta_m,\delta_m)$ with $t \neq 0$, there exists no isometry
      $\psi \in \iso(\Sphere^3_{\tau_m(t)})$
      such that $\psi(\operatorname{Im}(x_{f_m(t)}))$ is a Hopf torus.
      \end{enumerate}
\end{theorem}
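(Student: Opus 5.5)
We will set up the problem as a nonlinear Fredholm equation and apply the Crandall--Rabinowitz theorem from a simple eigenvalue (Appendix~\ref{app:aux}) in the space of $K$--invariant functions. Define
\[
\Phi\colon (0,\infty)\times \mathcal{V}\subset (0,\infty)\times C^{4,\alpha}_K(\T^2)\to C^{0,\alpha}_K(\T^2),\qquad \Phi(\tau,f)=\mathcal{E}_\willmore\big(x_f\big)\cdot \rho_{\tau,f},
\]
where $x_f$ is the graph \eqref{eq: parametrização grafica do Clifford} computed in $\berger$, and $\rho_{\tau,f}$ is the Jacobian factor comparing the normal speed along $x_f$ with that of the normal graph direction. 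This factor is needed so that $\Phi$ is the gradient of $f\mapsto\willmore(x_f)$ with respect to the $L^2(\Sigma_\tau)$ pairing.

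\textbf{Step 1: $\Phi$ is well defined and smooth, with $\Phi(\tau,0)=0$.} Because $K$ preserves orientation and $x_f$ is $K$--equivariant, the principle of symmetric criticality shows that $\Phi$ takes values in $K$--invariant functions. Moreover, zeros of $\Phi$ with $f$ small are exactly the $K$--invariant Willmore graphs. Smoothness in $(\tau,f)$ follows since the metric $g_\tau$ depends analytically on $\tau$. Finally, $\Phi(\tau,0)=0$ because $\Sigma_\tau$ is Willmore for every $\tau$.

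\textbf{Step 2: identify the linearization and its kernel.} By \cref{second variation of surfaces in the Berger sphere} we have $D_f\Phi(\tau,0)=\mathcal{L}_{\Sigma_\tau}$ (up to sign), restricted to $C^{4,\alpha}_K$. This operator is elliptic and self-adjoint, hence Fredholm of index $0$ from $C^{4,\alpha}_K$ to $C^{0,\alpha}_K$.

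Next we determine which eigenfunctions of \cref{proposition: spectrum of the Clifford torus} are $K$--invariant. By \eqref{eq: K-invariant functions on T2}, invariance under $\varphi_{R_{\theta,-\theta}}$ forces functions of $\theta+\varphi$ only. In terms of the coefficients $\cos(k\theta\pm l\varphi)$, this means $k=l$ with the $+$ sign. Invariance under $\varphi_{\conj}$ then kills the sines. Hence
\[
\ker \mathcal{L}_{\Sigma_\tau}\cap C^{4,\alpha}_K=\operatorname{span}\{\cos(m(\theta+\varphi)) : \lambda^+_{m,m}(\tau)=0\}.
\]
By the monotonicity computed in the proof of \cref{theorem: Stability of the Clifford torus}, each $\lambda^+_{m,m}$ has a unique zero $\tau_m$. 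We must check that the $\tau_m$ are distinct, so that the kernel at $\tau_m$ is one-dimensional; this follows from $\lambda^+_{m,m}(\tau)$ being strictly decreasing in $m$ at fixed $\tau$, consistent with \eqref{Clifford bifurcations parameters}. The case $m=0$ needs separate care: the function $1$ is a kernel element at $\tau_0=\sqrt2$, and $\lambda_{1,1}^-$ is not $K$--invariant, so it causes no trouble.

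\textbf{Step 3: transversality and conclusion.} The transversality condition reads $\partial_\tau D_f\Phi(\tau_m,0)\,u_m\notin \operatorname{Im} D_f\Phi(\tau_m,0)$ for $u_m=\cos(m(\theta+\varphi))$. Since the range is the $L^2$-orthogonal complement of $u_m$, this reduces to $\frac{d}{d\tau}\lambda^+_{m,m}(\tau_m)\neq 0$, which holds because the derivative is strictly negative. One subtlety is that the $L^2$ pairing depends on $\tau$; this is handled by the fact that $u_m$ remains an eigenfunction for all $\tau$. Crandall--Rabinowitz then yields the smooth path $\gamma_m$, the local uniqueness $(a)$, and the tangent direction $f_m'(0)=u_m$, which gives $(b)$. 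Conversely, at any $\tau\notin\{\tau_m\}$ the operator $D_f\Phi(\tau,0)$ is invertible on $C^{4,\alpha}_K$, so the implicit function theorem excludes bifurcation; this proves the first claim.

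\textbf{Step 4: property $(c)$.} This is the step I expect to be the main obstacle. Suppose $\psi(x_{f_m(t)})$ is a Hopf torus. Since isometries for $\tau\neq1$ lie in $U(2)\cup U^-$ and preserve the Hopf fibration, $x_{f_m(t)}$ would itself be Hopf, that is, invariant under the flow $R_{s,s}$. Combined with invariance under $R_{\theta,-\theta}$, it would then be invariant under the whole torus $T^2$ of rotations $R_{\mu,\nu}$, by the maximal torus argument. Such a surface is a product $\Sphere^1(r_1)\times\Sphere^1(r_2)$, so $f_m(t)$ would be constant. But $f_m(t)=t\,u_m+o(t)$ with $u_m$ non-constant for $m\ge1$, which is a contradiction. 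A further check would show that a nearby product torus being Willmore forces it to equal $\Sigma$ or $\widetilde\Sigma$, which already forces $m=0$. The delicate points are justifying that $\psi$ can be conjugated into the normalizer of the Hopf action near the identity, and handling $\tau_m(t)$ near $1$, which is excluded since $m\ge2$ gives $\tau_m>1$.
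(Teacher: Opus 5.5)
Your Steps 1--3 follow the paper's argument: Crandall--Rabinowitz for $(\tau,f)\mapsto \mathcal{E}_{\mathcal{W}}(x_f)$ on $C^{4,\alpha}_K(\T^2)$. The $K$-invariant functions are exactly the $\cos(n(\theta+\varphi))$, the kernel is $\R\, C^+_{m,m}$, transversality comes from $\frac{d}{d\tau}\lambda^+_{m,m}(\tau_m)<0$, and the implicit function theorem handles parameters off $\{\tau_m\}$.

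Your factor $\rho_{\tau,f}$ is harmless, since it equals $1$ at $f=0$ and $\mathcal{E}_{\mathcal{W}}(x_0)=0$, but it is unnecessary. The paper uses $\mathcal{E}_{\mathcal{W}}(x_f)$ directly and checks $K$-invariance term by term. Your argument (equivariance of the gradient of a $K$-invariant functional, rather than ``symmetric criticality'') also works.

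One justification is false. At fixed $\tau$, $\lambda^+_{n,n}(\tau)$ is a convex quadratic in $x=n^2$ with leading coefficient $16/\tau^4$, so it is not decreasing in $n$. Distinctness of the $\tau_m$ (asserted in \cref{remark: behavior of the sequence tau_m}) still holds, for a different reason. At $\tau=\tau_m$ the product of the two roots in $x$ is $\tau_m^4(2-\tau_m^2)/2$. This is negative if $\tau_m>\sqrt2$ and lies in $(0,16/27]$ if $\tau_m<\sqrt2$, so the other root is never $n^2$ with $n\in\N_0$. Finally, $\lambda^+_{n,n}(\sqrt2)=4n^2(n^2-10)\neq0$ for $n\ge1$.

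For property (c) you take a genuinely different route. The paper works with the Hopf torus $\Sigma^\beta$ in four steps:
\begin{enumerate}
\item It uses the Maximal Torus Theorem to conjugate $\mathrm{Iso}_0(\Sigma^\beta)$ to $\{R_{\mu,\nu}\}$.
\item It classifies $R_{\mu,\nu}$-invariant Willmore tori as $\Sigma_\tau$ or $\widetilde\Sigma_\tau$.
\item It excludes $\widetilde\Sigma_\tau$ via $\tau_m(t)<\sqrt2$ for $m=2,3$, or via an energy comparison for $m\ge4$.
\item It applies a rigidity lemma for the symmetries of $\Sigma_\tau$ to get $\mathrm{Im}(x_{f_m(t)})=\Sigma_{\tau_m(t)}$.
\end{enumerate}

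You argue more directly:
\begin{enumerate}
\item Every isometry of $\Sphere^3_\tau$ with $\tau\neq1$ normalizes $\{R_{s,s}\}$: it is central in $U(2)$, and $\mathrm{Conj}\,R_{s,s}\,\mathrm{Conj}=R_{-s,-s}$. Hence $X=\mathrm{Im}(x_{f_m(t)})$ is itself $R_{s,s}$-invariant.
\item Together with $R_{\theta,-\theta}$, this gives invariance under all $R_{\mu,\nu}=R_{s,s}R_{\theta,-\theta}$. No Maximal Torus Theorem is needed.
\item Since $x_f(z,w)=(\cos(\tfrac{\pi}{4}+f)z,\sin(\tfrac{\pi}{4}+f)w)$, a torus-invariant graph has constant $f$.
\item A constant is $L^2$-orthogonal to $u_m$, which contradicts $f_m(t)=t\,u_m+o(t)$ once $\delta_m$ is shrunk.
\end{enumerate}

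This is correct and more economical. It uses neither the Willmore equation nor energy comparisons, and it treats all $m\ge2$ uniformly. It also shows exactly why the argument fails for $m=0$ (where $u_0$ is constant) and for $m=1$ (where $\tau=1$ and $O(4)$ does not preserve the fibration). The ``delicate point'' you flag, conjugating $\psi$ into the normalizer of the Hopf action, is not an issue, because your own first observation applies to every isometry $\psi$. What the paper's longer route buys is the geometric identification of the torus-invariant Willmore competitors, which it then reuses for the bifurcated brother.
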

\medskip

\begin{remark}\label{remark: trivial bifurcation at 1}

    There are two bifurcations that we can describe explicitly. They occur at $\tau_1=1$ and $\tau_0 = \sqrt{2}$. The bifurcation at $\tau_0$ was described in \cref{remark: trivial bifurcation at sqrt 2}. Now we describe the other one. Let $s \mapsto R_s \in \iso(\Sphere^3_1) = O(4)$ be the curve of isometries given by 
    
    \medskip
    \begin{align*}
        R_s = 
        \begin{pmatrix}
        \cos(s) & 0 & -\sin(s) & 0 \\
        0 & \cos(s) & 0 & \sin(s) \\
        \sin(s) & 0 & \cos(s) & 0 \\
        0 & -\sin(s) & 0 & \cos(s)
        \end{pmatrix}.
    \end{align*}
    
    \medskip
    Notice that $R_0(\Sigma_\tau) = \Sigma_\tau$, $R_{\frac{\pi}{4}}(\Sigma_\tau) \ne \Sigma_\tau$, and $R_s(\Sigma_\tau)$ is $K$--invariant for every $s \in \R$. Indeed, it suffices to check that
    $R_s^{-1}R_{\theta,-\theta}R_s = R_{\theta,-\theta}$ and 
    $R_s^{-1}\conj R_s = \conj$.

\end{remark}
 
Now we describe the strategy to prove the bifurcation of $\Sigma_\tau$ by new Willmore surfaces. The $\widetilde{\Sigma}_\tau$ case follows the same ideas. The subgroup $K \subset \fix(\Sigma_\tau)$ plays two important roles. First, to bifurcate $\Sigma_\tau$, we apply Crandall--Rabinowitz (see \cref{Crandall-Rabinowitz}) defining the map $(\tau,f) \mapsto \mathcal{P}(\tau,f) = \mathcal{E}_\willmore(x_f)$, where $\mathcal{E}_\willmore$ is the Euler--Lagrange operator in $\berger$. Notice that $\tau \mapsto (\tau,0)$ is a path of solutions to $\mathcal{P}(\tau,f)=0$ and any solution $(\tau,f)$ yields a Willmore surface $x_f \colon  \T^2 \to \berger$. If we choose $C^{4,\alpha}(\T^2)$ to be the domain of $\mathcal{P}(\tau, \cdot)$, the hypothesis (b) of Crandall--Rabinowitz is false, since the eigenvalues of $\mathcal{L}_{\Sigma_\tau}$ have even multiplicities by \cref{proposition: spectrum of the Clifford torus}. So we restrict the domain of $\mathcal{P}(\tau,\cdot)$ to a subspace where a certain eigenvalue is simple, namely, $\lambda_{m,m}^+$. The corresponding subspace is $C^{4,\alpha}_K(\T^2)$. The second role of $K$ is that, while the Crandall--Rabinowitz theorem is from the realm of Analysis and does not distinguish geometries, if $f$ is $K$--invariant, then $x_f$ is also $K$--invariant, and we can exploit this fact to prove item (c). 

\medskip
We begin with two lemmas whose proofs are postponed to \cref{app: lemma's proofs}.
\medskip

In order to describe $\fix(\Sigma_\tau)$ and $\fix(\widetilde{\Sigma}_\tau)$, we identify 
$$
\Sphere^1 \times \Sphere^1 = \{ R_{\mu,\nu} : \mu,\nu \in \R \} \quad \text{by} \quad 
(e^{i\mu},e^{i\nu}) \mapsto R_{\mu,\nu},
$$ 
and 
$$
\Z_2 \times \Z_2 = \{I,\swap,\conj, \swap \circ \conj \} \quad by \quad (1,0) \mapsto \swap, \quad \text{and} \quad(0,1) \mapsto \conj,
$$
where $I$ denotes the identity.

\begin{lemma}\label{lemma: Symetries of Clifford Torus}
Let $\Sigma = \Sphere^1(r_1) \times \Sphere^1(r_2)$ with $r_1 \neq r_2$ both positive, and assume $r_1^2 + r_2^2 = 1$. For $\tau \ne 1$, the groups of symmetries of the Clifford torus and of $\Sigma$ in the Berger sphere $\berger$ are given by 
\begin{align*}
\fix(\Sigma_\tau) &= \big\{R_{\mu,\nu} \circ T :  \mu,\nu \in \R, \ T \in  \{I, \swap, \conj, \swap \circ \conj\} \big\}, \\
\fix(\Sigma) &= \big\{R_{\mu,\nu} \circ T :  \mu,\nu \in \R, \ T \in  \{I, \conj \} \big\}.
\end{align*}
Moreover, 
$$
\Sphere^1 \times \Sphere^1 \rtimes_{\phi} \Z_2 \times \Z_2 \to \fix(\Sigma_\tau), \quad (R_{\mu,\nu}, T) \mapsto R_{\mu,\nu} \circ T,
$$
is a group isomorphism, where the action $$\Z_2 \times \Z_2 \overset{\phi}{\curvearrowright} \Sphere^1 \times \Sphere^1, \quad T \cdot_\phi R_{\mu,\nu} = T \circ R_{\mu,\nu} \circ T^{-1},$$ is given by conjugation.
\end{lemma}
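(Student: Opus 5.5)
The plan is to use the description $\iso(\berger)=U^+\cup U^-$ for $\tau\neq 1$, with $U^+=U(2)$ and $U^-=\conj\circ U(2)$, so that every symmetry has the form $\psi=U$ or $\psi=\conj\circ U$ with $U\in U(2)$. Since $\conj$ preserves both $\Sigma_\tau$ and $\Sigma$, it suffices to determine which unitary matrices $U$ preserve a torus $\Sphere^1(r_1)\times\Sphere^1(r_2)$ with $r_1^2+r_2^2=1$. Conversely, it is immediate that each $R_{\mu,\nu}$ and $\conj$ preserve both tori, and $\swap$ preserves the torus exactly when $r_1=r_2$. This gives the inclusions ``$\supseteq$''.

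For ``$\subseteq$'', I would characterise the torus intrinsically. For $U\in U(2)$, the image $U(\Sigma)$ is again a Hopf torus: it is the preimage under $\pi$ of a circle, since $U$ commutes with the Hopf action and descends to a rotation of $\Sphere^2\left(\frac12\right)$. Here $\Sigma=\pi^{-1}$ of the horizontal circle at height $h=\tfrac12(r_1^2-r_2^2)$. The rotation $\bar U\in SO(3)$ induced by $U$ must therefore preserve this circle, and hence preserve or swap the poles $(0,\pm\frac12)$. In the case $h\neq 0$, it must fix the circle's axis pointwise up to orientation, and cannot exchange the poles, because that would send height $h$ to $-h$. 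So $\bar U$ is a rotation about the vertical axis, which means $U$ maps the fibre $\{w=0\}$ to itself. A unitary map preserving the line $\C\times\{0\}$ also preserves its orthogonal complement, so $U$ is diagonal, $U=R_{\mu,\nu}$. In the Clifford case $h=0$, $\bar U$ may also exchange the poles. Composing with $\swap$, which induces $\rho$ up to rotation, reduces this to the previous case, giving $U\in\{R_{\mu,\nu},\,R_{\mu,\nu}\circ\swap\}$. Combining with the $\conj$ factor yields both formulas.

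An alternative, more elementary route to the same point is to note that $|z|$ restricted to $\Sigma$ is constant, so $U$ must preserve the quadratic form $|z|^2-|w|^2$ up to sign on $\Sigma$. Since $\Sigma$ spans $\C^2$ in the real-quadric sense, this forces $U$ to commute or anticommute with $\operatorname{diag}(1,-1)$. Anticommutation is only possible when $r_1=r_2$.

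Finally, for the semidirect product I would check three things. First, $\{I,\swap,\conj,\swap\circ\conj\}$ is a group isomorphic to $\Z_2\times\Z_2$, because $\swap$ and $\conj$ commute and are involutions. Second, it normalises the torus $\{R_{\mu,\nu}\}$, via $\swap R_{\mu,\nu}\swap=R_{\nu,\mu}$ and $\conj R_{\mu,\nu}\conj=R_{-\mu,-\nu}$. Third, the two subgroups intersect trivially, which follows by evaluating at the point $(1,0)$ or comparing matrices. The standard criterion for internal semidirect products then shows that the multiplication map is a group isomorphism, since it is bijective by the uniqueness of the decomposition $R_{\mu,\nu}\circ T$. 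The main obstacle is the step ruling out non-diagonal unitary maps, and I expect it to go through cleanly via the Hopf-descent argument.
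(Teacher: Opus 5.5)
Your proposal is correct, but your main argument takes a different route from the paper's.

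The paper works with matrices. It writes $\psi\in U^+$ as $\begin{pmatrix}\alpha&\beta\\ \gamma&\delta\end{pmatrix}$ and imposes $|\alpha z+\beta w|=r_1$, $|\gamma z+\delta w|=r_2$ on $\Sigma$. Subtracting gives the single identity $(r_1^2-r_2^2)(1+|\beta|^2-|\delta|^2)=2\operatorname{Re}\big((\alpha\overline{\beta}-\gamma\overline{\delta})z\overline{w}\big)$. It then kills the right-hand side by a choice of phase of $z$, and uses the unitarity relations to force $\psi$ to be diagonal, or antidiagonal (the latter only if $r_1=r_2$).

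You instead descend to $\Sphere^2(\tfrac12)$. Since $U(2)$ commutes with the Hopf action, $U$ induces a rotation $\bar U$ that must preserve the latitude circle $\pi(\Sigma)$. So $\bar U$ fixes both poles when the height is nonzero, and fixes or swaps them for the equator. Lifting back, a unitary map preserving the fibre $\{w=0\}$ is diagonal, and composing with $\swap$ (which induces exactly $\rho$) handles the pole swap.

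Your route is more conceptual. It explains the extra $\swap$ symmetry of the Clifford torus as the pole-exchanging rotation preserving the equator, and it fits the Hopf-tori viewpoint the paper uses in its rigidity claims. The cost is invoking the standard facts that $U(2)$ acts on $\Sphere^2(\tfrac12)$ by isometries through $SO(3)$. The paper's computation is fully self-contained and treats both radii cases with one identity. Your check of the semidirect product (commuting involutions, normalisation, trivial intersection, unique decomposition) is more explicit than the paper's ``follow routinely''.

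One caveat on your alternative route: the claim that $\Sigma$ ``spans $\C^2$ in the real-quadric sense'' is false. $\Sigma$ lies on the real quadric $r_2^2|z|^2-r_1^2|w|^2=0$, so restriction to $\Sigma$ does not determine a Hermitian form. With $D=\operatorname{diag}(1,-1)$, the vanishing of $\langle (U^*DU-D)v,v\rangle$ on $\Sigma$ only kills the off-diagonal entry of $U^*DU$ and gives one relation on its diagonal. To finish you must also use that $U^*DU$ is traceless with eigenvalues $\pm1$. Writing $U^*DU=\begin{pmatrix}p&0\\0&-p\end{pmatrix}$ with $p^2=1$, the relation $p(r_1^2-r_2^2)=r_1^2-r_2^2$ then gives $U^*DU=D$ if $r_1\neq r_2$ and $U^*DU=\pm D$ if $r_1=r_2$. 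This completed version is exactly the paper's identity in invariant form.
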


Let 
$$
\big(\overline{C_K^{0,\alpha}(\T^2)}^{L^2},\ip{\cdot}{\cdot}_{L^2}\big)
$$
be the Hilbert space where $\overline{C_K^{0,\alpha}(\T^2)}^{L^2}$ is the $L^2$--closure of the H\"older space $C_K^{0,\alpha}(\T^2)$ and the $L^2$--inner product is induced by the canonical metric on the torus $\T^2 = \Sphere^1 \times \Sphere^1$. Similarly, let
$$
\big(\overline{C_K^{4,\alpha}(\T^2)}^{H^4},\ip{\cdot}{\cdot}_{H^4}\big)
$$
be the Hilbert space where $\overline{C_K^{4,\alpha}(\T^2)}^{H^4}$ is the $H^4$--closure of the H\"older space $C_K^{4,\alpha}(\T^2)$, and the $H^4$--inner product is defined by 
\begin{align*}
    \forall u,v \in H^4(\T^2), \quad \ip{u}{v}_{H^4} := \int_{\T^2} (I-\Delta_{can})^2u \, (I-\Delta_{can})^2v \, d\T^2,
\end{align*}
where $\Delta_{can}$ and $d\T^2$ are the Laplace--Beltrami operator and the volume form, respectively, corresponding to the canonical metric on $\T^2$. Define 
$$
C_{m,n}^\pm,S_{m,n}^\pm \colon \T^2 \to \R
$$ 
by
$$
C_{m,n}^\pm(e^{i\theta},e^{i\varphi}) = \cos(m\theta \pm n\varphi), \ S_{m,n}^\pm(e^{i\theta},e^{i\varphi}) = \sin(m\theta \pm n\varphi).
$$
It is well known that there is a family 
$$
\mathcal{B} \subset \{C_{m,n}^\pm,S_{m,n}^\pm\}_{m,n \in \N_0}
$$ which is an orthogonal (non-normalized) basis of the Hilbert spaces $\big(L^2(\T^2),\ip{\cdot}{\cdot}_{L^2}\big)$ and $\big(H^4(\T^2),\ip{\cdot}{\cdot}_{H^4}\big)$. To obtain a basis, one has to discard $S_{0,m}^+$ or $S_{0,m}^- \,$, for example.

\begin{lemma}\label{lemma: Orthogonal basis of K-invariant functions}
   The family $\{C_{m,m}^+\}_{m \in \N_0}$ is an orthogonal (non--normalized) basis of both Hilbert spaces $\big(\overline{C_K^{0,\alpha}(\T^2)}^{L^2},\ip{\cdot}{\cdot}_{L^2}\big)$ and $\big(\overline{C_K^{4,\alpha}(\T^2)}^{H^4},\ip{\cdot}{\cdot}_{H^4}\big)$.
\end{lemma}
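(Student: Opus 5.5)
The plan is to reduce everything to Fourier analysis on $\T^2$ in the coordinates $(e^{i\theta},e^{i\varphi})$. By \cref{eq: K-invariant functions on T2}, $K$--invariance of $f$ means exactly two things. First, $f(\theta+t,\varphi-t)=f(\theta,\varphi)$ for all $t$. Second, $f(-\theta,-\varphi)=f(\theta,\varphi)$. Both maps $\varphi_k$ preserve the canonical measure $d\T^2$ and commute with $\Delta_{can}$, so they act isometrically on $L^2(\T^2)$ and on $(H^4(\T^2),\ip{\cdot}{\cdot}_{H^4})$. Consequently the $L^2$-- and $H^4$--closures of $C^{0,\alpha}_K(\T^2)$ and $C^{4,\alpha}_K(\T^2)$ are contained in the closed subspaces of (a.e.) $K$--invariant functions, since invariance passes to limits.

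\textbf{Membership and orthogonality.} Each $C^+_{m,m}=\cos(m(\theta+\varphi))$ is smooth. It depends only on $\theta+\varphi$, which is unchanged by $\varphi_{R_{t,-t}}$, and it is even, so it is invariant under $\varphi_\conj$. Hence $C^+_{m,m}$ lies in both $C^{0,\alpha}_K$ and $C^{4,\alpha}_K$. Orthogonality in $L^2$ is the standard orthogonality of trigonometric functions, since these functions belong to the family $\mathcal{B}$. For $H^4$, use that $(I-\Delta_{can})^2C^+_{m,m}=(1+2m^2)^2C^+_{m,m}$. This gives the identity
\[
\ip{u}{C^+_{m,m}}_{H^4}=(1+2m^2)^2\,\ip{(I-\Delta_{can})^2u}{C^+_{m,m}}_{L^2}
\]
after integrating by parts. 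Orthogonality in $H^4$ then follows from the $L^2$ case.

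\textbf{Completeness in $L^2$.} This is the main step. Take $f$ in the $L^2$--closure, so $f$ is $K$--invariant a.e. Let $\hat f(k,l)=\int f\,e^{-i(k\theta+l\varphi)}$ be its Fourier coefficients.
\begin{itemize}
\item Changing variables by $\varphi_{R_{t,-t}}$ gives $\hat f(k,l)=e^{i(k-l)t}\hat f(k,l)$ for every $t$. Hence $\hat f(k,l)=0$ unless $k=l$, so $f=\sum_m c_m e^{im(\theta+\varphi)}$.
\item Invariance under $\varphi_\conj$ gives $c_m=c_{-m}$. Since $f$ is real, the series becomes $f=c_0+\sum_{m\ge1}2\,\mathrm{Re}(c_m)\cos(m(\theta+\varphi))$; the sine terms cancel.
\end{itemize}
So $f$ lies in the closed span of $\{C^+_{m,m}\}$. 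Equivalently, an invariant $f$ that is $L^2$--orthogonal to every $C^+_{m,m}$ vanishes.

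\textbf{Completeness in $H^4$.} Let $u$ be in the $H^4$--closure and $H^4$--orthogonal to all $C^+_{m,m}$. Then $(I-\Delta_{can})^2u$ is still $K$--invariant, because $\Delta_{can}$ commutes with the $\varphi_k$. By the displayed identity, it is $L^2$--orthogonal to every $C^+_{m,m}$. The $L^2$ case then gives $(I-\Delta_{can})^2u=0$, and since $I-\Delta_{can}$ is injective, $u=0$. This proves that $\{C^+_{m,m}\}_{m\in\N_0}$ is an orthogonal basis of both Hilbert spaces.

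I expect the only delicate point to be justifying that invariance survives the closure, which handles the a.e. sense of invariance for $L^2$ limits. This works because the $\varphi_k$ are measure-preserving isometries of $\T^2$.
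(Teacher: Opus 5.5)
Your proof is correct and follows essentially the paper's route: you use invariance under $R_{\theta,-\theta}$ and $\conj$ to show that every trigonometric mode other than $C^+_{m,m}$ is orthogonal to $K$--invariant functions, and you reduce the $H^4$ statement to the $L^2$ one via the eigenfunction property of $\Delta_{can}$. The differences are cosmetic. You phrase the key step through Fourier coefficients, $\hat f(k,l)=e^{i(k-l)t}\hat f(k,l)$, rather than the paper's change of variables $(\theta,\varphi)=(a+b,a-b)$, and you make explicit the $\conj$--invariance needed to kill the modes $S^+_{m,m}$, which the paper leaves implicit.
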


Now we are ready to prove our main theorem.

\begin{proof}[Proof of \cref{theorem: Bifurcations of the Clifford Torus}]

Let $U \subset C^{4,\alpha}_K(\T^2)$ be a sufficiently small neighbourhood of the origin such that, for every $f \in U$, we have that $x_f$ is an embedding ($x_f$ do not depend of $\tau$ by \cref{eq: parametrização grafica do Clifford}). In the light of \cref{euler-lagrange equation of willmore surfaces in the Berger sphere}, define
$$
\mathcal{P} \colon (0,+\infty) \times U \to C^{0,\alpha}_K(\T^2)
$$
by
$$
\mathcal{P}(\tau,f) = -\Delta^\tau_f H^\tau_f - \big(|\traceless{A}^\tau_f|+4(1-\tau^2)(1+(C^\tau_f)^2)\big)H^\tau_f + 16(1-\tau^2)A^\tau_f(T^\tau_f,A^\tau_f),
$$
where $\Delta^\tau_f, H^\tau_f, A^\tau_f, C^\tau_f, T^\tau_f$ are all computed with respect to the pull-back metric $x_f^*(g_\tau)$. Hence, $\mathcal{P}(\tau,f)$ is the first variation operator of the Willmore energy applied to $x_f \colon \T^2 \to \berger$ and $x_f \colon \T^2 \to \berger$ is a Willmore surface if, and only if, $\mathcal{P}(\tau,f) = 0$. The codomain of $\mathcal{P}$ is well--defined because of the \cref{claim: K-invariance of Euler-Lagrage equation} below. Since the Clifford torus is a Willmore surface in every Berger sphere,
\begin{equation*}
    \forall \tau>0, \quad \mathcal{P}(\tau,0) = 0.
\end{equation*}
We check the hypothesis of \cref{Crandall-Rabinowitz} for a bifurcation of the path $\tau \mapsto (\tau,0)$, as roots of $\mathcal{P}$, at $\tau = \tau_m$ for a fixed $m \in \N_0$. 

\begin{claim}\label{claim: K-invariance of Euler-Lagrage equation}
The first variation operator of the Willmore functional applied to $x_f$ is $K$--invariant. More precisely, for every $f \in U$ and every $k \in K$, it holds that 
\begin{equation*}
    \mathcal{E}_\willmore(x_f) \circ \varphi_k = \mathcal{E}_\willmore(x_f).
\end{equation*}  
\end{claim}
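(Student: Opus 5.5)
The plan is to derive the claim from the naturality of the Willmore Euler--Lagrange operator under ambient isometries, combined with the equivariance $k \circ x_f = x_f \circ \varphi_k$ established before \cref{definition: invariant bifurcation} for $K$--invariant $f$.

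First we check that every $k \in K$ preserves the orientation of $\berger$. For $\tau \neq 1$ this is already noted after the description of $\iso(\berger)$: all isometries preserve orientation. To cover every $\tau$ we check it directly. $R_{\theta,-\theta} \in U(2)$ is homotopic to the identity. $\conj$ has real determinant $(+1)(-1)(+1)(-1)=1$ on $\C^2 \cong \R^4$ and fixes the outward normal of $\Sphere^3$, so it preserves the orientation of $\Sphere^3$. Since $f \in U$ is $K$--invariant, we get $k\circ x_f = x_f \circ \varphi_k$, so $\varphi_k$ is a diffeomorphism of $\T^2$ intertwining the two immersions.

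Next we compare the unit normals. Because $k$ preserves the ambient orientation, $\varphi_k$ maps the positive unit normal of $x_f$ to that of $k\circ x_f$. Concretely, we need $dk(n_f) = n_f\circ\varphi_k$, where $n_f$ is the normal of $x_f$. One way to verify this is to check that $\varphi_k$ preserves the orientation of $\T^2$:
\begin{itemize}
\item $\varphi_{R_{\theta,-\theta}}$ is a rotation of $\T^2$;
\item $\varphi_{\conj}$ is $(z,w)\mapsto(\bar z,\bar w)$, which reverses both circle factors and hence preserves the orientation of $\T^2$.
\end{itemize}
Orientation compatibility then forces the normals to correspond, so this can be verified at $f=0$ and propagated by continuity. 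An easier route is to avoid orientation issues entirely: $\mathcal{E}_\willmore$ changes sign under $N\mapsto -N$, and $dk(N)$ and $N\circ\varphi_k$ agree at $f=0$ by direct computation with $N(z,w)=\tfrac1{\sqrt2}(-z,w)$. Both sides depend continuously on $f$ and are unit normals, so they agree on the connected neighbourhood $U$.

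Then we use naturality. Every term of $\mathcal{E}_\willmore$ in \cref{euler-lagrange equation of willmore energy} is built from the induced metric, the second fundamental form, the normal and the ambient curvature tensors. All of these are preserved by an isometry $k$ that carries normal to normal. Hence $\mathcal{E}_\willmore(k\circ x_f)=\mathcal{E}_\willmore(x_f)$ as functions on $\T^2$, computed with respect to the same normal. Reparametrization invariance gives $\mathcal{E}_\willmore(x_f\circ\varphi_k)=\mathcal{E}_\willmore(x_f)\circ\varphi_k$. Combining these with $k\circ x_f = x_f\circ\varphi_k$ yields the claim.

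Two points deserve care. If one works with the Berger form in \cref{euler-lagrange equation of willmore surfaces in the Berger sphere}, the quantities $C$ and $T$ involve $\xi$, and $\conj$ sends $\xi$ to $-\xi$. But $C^2$ and $A(T,T)$ are quadratic in $\xi$, so they are invariant anyway. The main obstacle is the normal-compatibility step; the continuity argument above settles it.
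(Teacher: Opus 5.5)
Your proof is correct and is essentially the paper's argument: both rest on $k\circ x_f = x_f\circ\varphi_k$, on the normal compatibility $dk\,N_{x_f}=N_{x_f}\circ\varphi_k$, and on the naturality of every geometric quantity under the ambient isometry $k$ and the reparametrization $\varphi_k$. The paper checks this term by term in the Berger-specific form of the operator, absorbing $dk(\xi)=\pm\,\xi\circ k$ through the quadratic dependence of $C^2$ and $A(T,T)$ on $\xi$, whereas you apply naturality to the general curvature form in one stroke and justify the normal compatibility (via $\varphi_k$ preserving the orientation of $\T^2$, or by continuity from $f=0$) more carefully than the paper, which simply asserts it from the fact that $k$ preserves orientation.
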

\begin{proof}
    For any given immersion $z \colon \T^2 \to \berger$, let $N_z,A_z, H_z,T_z,C_z,\Delta_z$ and $\nabla^z$ denote the unit normal vector field, second fundamental form, mean curvature, tangent projection of $\xi$, angle function, Laplace--Beltrami operator, and connection along $z$, respectively, corresponding to the pull-back metric $z^*g$. Let $f \in U$ and $k\in K$. Since 
    \begin{equation*}
        k \circ x_f = x_f \circ \varphi_k,
    \end{equation*}
    we have that
    \begin{equation*}
        \varphi_k \colon (\T^2,x_f^*g) \to (\T^2,x_f^*g)
    \end{equation*}
    is an isometry. Fix $p \in \T^2$ and let $\{e_i\}$ be a $(x_f^*g)$--orthonormal basis of $T_p\T^2$. For the computations below, we use that $\{d\varphi_ke_i\}$ is an $(x_f^*g)$--orthonormal basis of $T_{\varphi_k(p)}\T^2$.

    \medskip
    \noindent
    $H_{x_f}$ is $K$-invariant:   
    \begin{align*}
        H_{x_f \circ \varphi_k}(p) &= A_{x_f \circ \varphi_k}(e_i,e_i) = g(\nabla^{x_f \circ \varphi_k}_{e_i}N_{x_f \circ \varphi_k}, d(x_f \circ \varphi_k)e_i) \\ 
        &= g\big(\nabla^{x_f \circ \varphi_k}_{e_i}(N_{x_f} \circ \varphi_k),dx_f(d\varphi_ke_i)\big) = g\big(\nabla^{x_f}_{d\varphi_ke_i}N_{x_f},dx_f(d\varphi_ke_i)\big) \\
        &=H_{x_f}(\varphi_k(p)),
    \end{align*}
    where we omitted the sum over $i$. Therefore,
    \begin{equation*}
       H_{x_f} \circ \varphi_k = H_{x_f \circ \varphi_k} .
    \end{equation*}
    On the other hand, since $k$ is a symmetry that preserves orientation,
    \begin{equation*}
       H_{x_f \circ \varphi_k} = H_{k \circ x_f} = H_{x_f} .
    \end{equation*}
    $|\traceless{A}_{x_f}|$ is $K$-invariant: since $|\traceless{A}_{x_f}|^2 = |A_{x_f}|^2 - \frac{1}{2}(H_{x_f})^2$ and $H_{x_f}$ is $K$--invariant, we only need to check this property for $|A_{x_f}|^2$. Indeed,
    \begin{align*}
        \big|A_{x_f}\big(\varphi_k(p)\big)\big|^2 &= g\big(\nabla_{d\varphi_k e_i}^{x_f} N_{x_f}, dx_f(d\varphi_k \, e_i)\big) = g\big(\nabla_{d\varphi_k e_i}^{x_f} N_{x_f}, dk(dx_f \, e_i)\big)\\  
        & = g(dk^{-1} \, \nabla_{d\varphi_ke_i}^{x_f} N_{x_f},dx_f \, e_i) = g\big(\nabla_{d\varphi_k e_i}^{k^{-1} \circ x_f} (dk^{-1} \, N_{x_f}),dx_f \, e_i\big) \\
        & = g\big(\nabla^{x_f}_{d\varphi_{k^{-1}} d\varphi_k e_i} N_{x_f}, dx_f \, e_i \big) = |A_{x_f}(p)|^2.
    \end{align*}
    $\Delta_{x_f}H_{x_f}$ is $K$--invariant: since $\varphi_k$ is an isometry and $H_{x_f}$ is $K$--invariant,
    \begin{align*}
        \Delta_{x_f} H_{x_f} = \Delta_{x_f} (H_{x_f} \circ \varphi_k) = (\Delta_{x_f}H_{x_f}) \circ \varphi_k. 
    \end{align*}
    $(C_{x_f})^2$ is $K$--invariant: since $k$ is a symmetry that preserves orientation,
    \begin{align*}
      dk \, N_{x_f} =N_{k \circ x_f} = N_{x_f \circ \varphi_k} = N_{x_f} \circ \varphi_k.    
    \end{align*}
    We also observe that 
    \begin{align*}
        dk(\xi) = \begin{cases}
            \xi \circ k, &\text{if $k \in U^+$ }, \\
            -\xi \circ k, &\text{if $k \in U^{-}$}.
        \end{cases}
    \end{align*}
    Using the two equations above, we obtain 
    \begin{align*}
        C_{x_f} \circ \varphi_k &= g(N_{x_f} \circ \varphi_k, \xi \circ x_f \circ \varphi_k) = g(dk \, N_{x_f},\xi \circ k \circ x_f) \\
        &= \pm \, g(dk \, N_{x_f},dk\big(\xi \circ x_f )\big) = \pm \, g(N_{x_f}, \xi \circ x_f) = \pm \, C_{x_f}. 
    \end{align*}
    $A_{x_f}(T_{x_f},T_{x_f})$ is $K$--invariant: only in this part, denote the tangential projection onto $dx_f(T\T^2)$ by $( \cdot)^\top$ and let $W \in \campos(\T^2)$ be the vector field defined by $dx_f \, W = T_{x_f}$. Note that
    \begin{align*}
        T_{x_f} \circ \varphi_k &= (\xi \circ x_f \circ \varphi_k)^\top = (\xi \circ k \circ x_f)^\top \\
        &= \pm \, \big(dk( \xi \circ  x_f)\big)^\top = \pm \, dk \, T_{x_f}.
    \end{align*}
    From the definition of $W$ and the injectivity of $dx_f$, we obtain that for every $\tilde{k} \in K$, $W$ is $(\varphi_{\tilde{k}})$--related to itself,
    \begin{align*}
        d\varphi_{\tilde{k}} \, W = W \circ \varphi_{\tilde{k}}.
    \end{align*}
    From the two equations above, it follows that
    \begin{align*}
        A_{x_f}(T_{x_f},T_{x_f}) \circ \varphi_k &= g(\nabla^{x_f}_{W \circ \varphi_k} N_{x_f}, dk \, T_{x_f}) = g(dk^{-1} \, \nabla^{x_f}_{W \circ \varphi_k} N_{x_f}, T_{x_f}) \\
        &= g\big(\nabla_{W \circ \varphi_k}^{k^{-1} \circ x_f}(dk^{-1} N_{x_f}), T_{x_f}\big) = g\big(\nabla_{W \circ \varphi_k}^{x_f \circ \varphi_{k^{-1}}}(N_{x_f} \circ \varphi_{k^{-1}}),T_{x_f}\big) \\
        &= g(\nabla_{d\varphi_{k^{-1}}(W \circ \varphi_k)}^{x_f} N_{x_f}, T_{x_f}) = g(\nabla_W^{x_f} N_{x_f}, T_{x_f}) = A_{x_f}(T_{x_f},T_{x_f}).
    \end{align*}
    By the computations above and \cref{euler-lagrange equation of willmore surfaces in the Berger sphere}, we are done.
    
\end{proof}

\begin{claim}\label{claim:smoothness_of_P}
    The map $\mathcal{P} \colon (0,+\infty) \times U \to C^{0,\alpha}_K(\T^2)$ is smooth.
\end{claim}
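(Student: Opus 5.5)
The strategy is to write $\mathcal{P}$ as a composition of smooth maps between Banach spaces, each of which is either a bounded linear map or a smooth (real-analytic) Nemytskii-type operator on Hölder spaces. The underlying fact is that, in the coordinates $(\theta,\varphi)$ on $\T^2$, the parametrization $x_f$ depends pointwise and smoothly on $(p,f(p))$. Explicitly, by \cref{eq: parametrização grafica do Clifford} the round exponential gives
\[
x_f(p)=\cos(f(p))\,x(p)+\sin(f(p))\,N(p),
\]
which is independent of $\tau$ and is an entire function of $f(p)$ with $C^\infty$ coefficients in $p$.

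First, all the geometric quantities entering $\mathcal{P}$ are universal polynomial or rational expressions in a small set of data. These data are:
\begin{itemize}
\item the jet $(f,\partial f,\partial^2 f,\partial^3 f,\partial^4 f)$;
\item the ambient metric coefficients $g_\tau$ evaluated along $x_f$, which are polynomial in $\tau^2$ by \cref{eq: metrica de berger};
\item the Christoffel symbols of $\berger$, again polynomial in $\tau^2$ by \cref{eq: conexão da Esfera de Berger};
\item the vector field $\xi=\tau^{-1}V$.
\end{itemize}
These expressions involve $\sqrt{\det(x_f^*g_\tau)}$ and $g_\tau(\tilde N,\tilde N)^{-1/2}$ for a non-normalized normal $\tilde N$. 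Concretely:
\begin{itemize}
\item the induced metric $(x_f^*g_\tau)_{ij}$ is a smooth function of $(\tau,p,f,\partial f)$;
\item the normal $N^\tau_f$, obtained by $g_\tau$-orthogonalizing against $\partial_\theta x_f,\partial_\varphi x_f$ and normalizing, is a smooth function of $(\tau,p,f,\partial f)$;
\item $A^\tau_f$ depends on $(\tau,p,f,\partial f,\partial^2 f)$;
\item $H^\tau_f$, $|\traceless A^\tau_f|^2$, $C^\tau_f$ and $A^\tau_f(T^\tau_f,T^\tau_f)$ depend on jets of order at most $2$;
\item $\Delta^\tau_f H^\tau_f$ depends on jets of order at most $4$.
\end{itemize}
Hence there is a function $\Phi\colon(0,\infty)\times\T^2\times\mathcal{O}\to\R$, smooth (indeed analytic in the jet variables), defined on an open set $\mathcal{O}$ of jet space containing the image of $j^4 f$ for $f\in U$, such that
\[
\mathcal{P}(\tau,f)(p)=\Phi\big(\tau,p,j^4f(p)\big).
\]
Shrinking $U$ if necessary keeps the denominators (the determinant of the induced metric and the length of $\tilde N$) bounded away from zero, uniformly on $U$.

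Second, I would invoke two standard facts:
\begin{itemize}
\item the jet map $j^4\colon C^{4,\alpha}(\T^2)\to C^{0,\alpha}(\T^2;\R^{M})$ is bounded linear;
\item for $\Phi$ smooth in all variables, the superposition operator $(\tau,u)\mapsto\Phi(\tau,\cdot,u(\cdot))$ is $C^\infty$ from $(0,\infty)\times\{u\in C^{0,\alpha}:\ u(\T^2)\subset\mathcal{O}\}$ to $C^{0,\alpha}(\T^2)$.
\end{itemize}
The second fact holds because $C^{0,\alpha}$ is a Banach algebra and composition with smooth functions is smooth on it. The $k$-th derivative is multiplication by $\partial_u^k\Phi(\tau,\cdot,u)$, and Taylor's formula with integral remainder gives the required $C^{0,\alpha}$ remainder estimates. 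The $\tau$-dependence is harmless because $\Phi$ is jointly smooth in $\tau$. Composing these two maps gives smoothness of $\mathcal{P}$ into $C^{0,\alpha}(\T^2)$.

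Finally, \cref{claim: K-invariance of Euler-Lagrage equation} shows that $\mathcal{P}$ takes values in the closed subspace $C^{0,\alpha}_K(\T^2)$. Moreover, $U\subset C^{4,\alpha}_K(\T^2)$ is open in a closed subspace. A smooth map into a Banach space with values in a closed subspace is smooth as a map into that subspace, since the derivatives are limits of difference quotients lying in the subspace. This yields the claim.

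The main obstacle is the bookkeeping of the first step: verifying that no term of $\mathcal{P}$ needs more than four derivatives of $f$ and that every denominator stays nonvanishing uniformly on $U$. I would handle this by writing $H$, $A$ and $\Delta$ in the coordinates $(\theta,\varphi)$, using $\tilde N$ and inverse-metric formulas, and then counting derivatives.
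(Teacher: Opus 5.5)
Your proposal is correct and follows essentially the same route as the paper. The paper transports $\mathcal{P}$ to $\pi\sqrt{2}$-periodic functions on $\R^2$ and writes it as $H\circ J$: here $J(\tau,f)=(\tau,f,\partial_1 f,\ldots,\partial_{2222}f)$ is the bounded linear jet map, and $H(\tau,w)=h(\tau,w(\cdot))$ is a superposition operator whose smoothness on H\"older spaces is quoted from the appendix of White's paper, where you instead sketch it with the Banach-algebra and Taylor-remainder argument. Your passage to the closed $K$-invariant subspaces matches the paper's reduction to a map $(0,+\infty)\times V\to C^{0,\alpha}(\T^2)$ on a neighbourhood $V\subset C^{4,\alpha}(\T^2)$ of the origin.
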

\begin{proof}
    It is enough to prove that $\mathcal{P}$ is smooth when regarded as a map $(0,+\infty) \times V \to C^{0,\alpha}(\T^2)$ for some neighborhood $V \subset C^{4,\alpha}(\T^2)$ of the origin such that $x_f$ is an embedding for each $f \in V$. Endow $\T^2$ with its canonical product metric. Since there is an isometry $\zeta:\T^2 \to \R^2/(\pi\sqrt{2}\Z)^2$, $\mathcal{P}$ induces a map 
    $$
    \mathcal{P}_1 \colon (0,+\infty) \times U_1 \to C^{0,\alpha}\big(\R^2/(\pi\sqrt{2}\Z)^2\big),
    $$ 
    where $U_1 \subset C^{4,\alpha}(\R^2/(\pi\sqrt{2}\Z)^2)$, such that $\mathcal{P}$ is smooth if and only if $\mathcal{P}_1$ is smooth. Let $C^{l,\alpha}_{\pi\sqrt{2}}(\R^2)$ be the subspace of the functions $ f \in C^{l,\alpha}(\R^2)$ that are $\pi\sqrt{2}$ periodic in both of its entries. There is a natural Banach isomorphism $C^{l,\alpha}\big(\R^2/(\pi\sqrt{2}\Z)^2\big) \to C^{l,\alpha}_{\pi\sqrt{2}}(\R^2)$. Thus, $\mathcal{P}_1$ induces a map 
    $$
    \mathcal{P}_2 \colon (0,+\infty) \times U_2 \to C^{0,\alpha}_{\pi\sqrt{2}}(\R^2),
    $$ 
    where $U_2 \subset C^{4,\alpha}_{\pi\sqrt{2}}(\R^2)$, such that $\mathcal{P}_1$ is smooth if and only if $\mathcal{P}_2$ is smooth. The map $\mathcal{P}_2$ is of the form $\mathcal{P}_2(\tau,f) = h(\tau,f,\partial_1f,\partial_2f,\partial_{11}f,\ldots,\partial_{2222}f)$, where $h \colon (0,+\infty) \times \R^{31} \to \R$ is smooth, since it depends only on $(\Sigma_\tau,g_\tau)$, the parametrization $x$, and the choice of isometry $\zeta$. The explicit expression of $h$ does not matter. Define the map
    $$
    J \colon (0,+\infty) \times C^{4,\alpha}_{\pi\sqrt{2}}(\R^2) \to (0,\infty) \times C^{0,\alpha}(\R^2,\R^{31})
    $$
    by
    $$
    J(\tau,f) = (\tau,f,\partial_1f,\partial_2f,\partial_{11}f, \ldots ,\partial_{2222}f),
    $$
    and the composition map
    $$
    H \colon (0,+\infty) \times C^{0,\alpha}(\R^2,\R^{31}) \to C^{0,\alpha}(\R^2)
    $$
    by
    $$
    \forall q \in \R^2, \quad  H(\tau,w)(q) = h(\tau,w(q)). 
    $$
    Note that $J$ is a restriction of a bounded linear map. The smoothness of $H$ follows by a straightforward adaptation of the proof of the Proposition in the appendix of \cite{Brian1987}. Thus, $\mathcal{P}_2 = H \circ J$ is smooth, which implies the smoothness of $\mathcal{P}$.

\end{proof}

\medskip

\begin{claim}\label{claim:Fredholmness_of_L}
    Let $L = D_0(\mathcal{P}(\tau_m, \cdot)) \colon C^{4,\alpha}_K(\T^2) \to C^{0,\alpha}_K(\T^2)$ be the differential of $\mathcal{P}(\tau_m, \cdot)$ at $0 \in C^{4,\alpha}_K(\T^2)$. Then, $L$ is a Fredholm operator such that:
    \begin{enumerate}[label=\textbullet]
        \item The Fredholm index of $L$ is 0.
        \item $\operatorname{Ker}(L) = \R C_{m,m}^+$ and $\frac{C^{0,\alpha}_K(\T^2)}{\operatorname{Im}(L)}$ is generated by the class of $C_{m,m}^+$.
    \end{enumerate}
\end{claim}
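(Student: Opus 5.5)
First I will identify $L$ with the Jacobi operator of the Willmore functional along the Clifford torus. By \cref{euler-lagrange equation of willmore energy} and the computation in \cref{second variation incomplete}, differentiating $\mathcal{P}(\tau_m,\cdot)$ at $f=0$ in the direction $u$ gives $\partial_t\mathcal{E}_\willmore(x_{tu})|_{t=0}$, which is, up to sign, the operator $\mathcal{L}_{\Sigma_{\tau_m}}(u)$ of \cref{second variation of surfaces in the Berger sphere}. One point needs care: the variation $t\mapsto x_{tu}$ is normal only at $t=0$. Since $\Sigma_{\tau_m}$ is Willmore, however, tangential components change $\mathcal{E}_\willmore$ only through reparametrization terms that are multiples of $\mathcal{E}_\willmore(\Sigma_{\tau_m})=0$, so they do not contribute. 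Then \cref{Clifford torus second variation operator} gives
\[
L=\pm\big(-\Delta^2-2(4-\tau_m^2)\Delta+16(1-\tau_m^2)L_\xi^2-8(2-\tau_m^2)\big),
\]
a constant-coefficient fourth-order operator in the coordinates $(\theta,\varphi)$. By \cref{claim: K-invariance of Euler-Lagrage equation}, or directly because it commutes with the $\varphi_k$ (these are translations and reflections in $(\theta,\varphi)$, and $L_\xi^2$ is invariant under $\conj$ since the sign flip of $\xi$ squares away), $L$ maps $C^{4,\alpha}_K(\T^2)$ into $C^{0,\alpha}_K(\T^2)$.

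Next I will diagonalize $L$ on the invariant space. By \cref{lemma: Orthogonal basis of K-invariant functions}, the $K$-invariant functions are spanned by $C^+_{m',m'}$, $m'\in\N_0$, and \cref{proposition: spectrum of the Clifford torus} gives $L C^+_{m',m'}=\pm\lambda^+_{m',m'}(\tau_m)C^+_{m',m'}$. Each $\lambda^+_{m',m'}$ is strictly decreasing in $\tau$ with a unique zero at $\tau_{m'}$ (see the proof of \cref{theorem: Stability of the Clifford torus}), and the $\tau_{m'}$ are pairwise distinct by \cref{remark: behavior of the sequence tau_m}. Hence the only vanishing eigenvalue on the invariant space is $\lambda^+_{m,m}(\tau_m)$.

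The kernel statement then goes as follows. If $u\in C^{4,\alpha}_K$ and $Lu=0$, I expand $u=\sum a_{m'}C^+_{m',m'}$ in $L^2$. Since $L$ is diagonal with constant coefficients, each coefficient satisfies $\lambda^+_{m',m'}(\tau_m)a_{m'}=0$, so $u=a_mC^+_{m,m}$. Thus $\operatorname{Ker}L=\R C^+_{m,m}$.

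The main obstacle is the cokernel, which must be handled in Hölder rather than Sobolev spaces. My plan is to show that
\[
\operatorname{Im}L=\{h\in C^{0,\alpha}_K:\ \ip{h}{C^+_{m,m}}_{L^2}=0\}.
\]
The inclusion "$\subseteq$" follows from the $L^2$-self-adjointness of $L$ (\cref{Clifford torus second variation operator} shows it is symmetric, since $\diverg\xi=0$) together with $LC^+_{m,m}=0$. For "$\supseteq$", take such an $h$ and set $u=\sum_{m'\ne m}(\pm\lambda^+_{m',m'}(\tau_m))^{-1}h_{m'}C^+_{m',m'}$. Because $|\lambda^+_{m',m'}(\tau_m)|\sim 16m'^4/\tau_m^4$, the operator $L$ is elliptic on $\T^2$ (its principal part is $-\Delta_{\tau_m}^2$), so $u\in H^4$ solves $Lu=h$. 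Schauder estimates for elliptic operators with constant coefficients then upgrade $u$ to $C^{4,\alpha}$, and $u$ is $K$-invariant because it is a limit of $K$-invariant sums. It follows that $\operatorname{Im}L$ is closed of codimension one, complemented by $\R C^+_{m,m}$, because $C^+_{m,m}$ itself has nonzero pairing with $C^+_{m,m}$. Therefore $L$ is Fredholm with one-dimensional kernel and one-dimensional cokernel, so its index is $0$, and the cokernel is generated by the class of $C^+_{m,m}$.
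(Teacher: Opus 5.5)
Your proposal is correct and follows essentially the same route as the paper: you diagonalize $L$ on the $K$-invariant basis $\{C^+_{n,n}\}$, observe that only $\lambda^+_{m,m}(\tau_m)$ vanishes, and use elliptic regularity to pass from an $H^4$ solution to a $C^{4,\alpha}_K$ one. The one difference is cosmetic: instead of going through the Sobolev extension $\overline{L}$ and the quotient maps, you identify $\operatorname{Im}L$ directly as the $L^2$-annihilator of $C^+_{m,m}$ inside $C^{0,\alpha}_K$, which has the small merit of making explicit that $C^+_{m,m}\notin\operatorname{Im}L$; this is what the one-dimensional cokernel, and hence index $0$, requires.
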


\begin{proof}
    From \cref{Clifford torus second variation operator}, we have that
\begin{equation*}
    \forall u \in C^{4,\alpha}_K(\T^2), \quad L(u)=-(\Delta^\tau_f)^2 u - 2(4-\tau^2)\Delta^\tau_f u - 8(2-\tau^2)u + 16(1-\tau^2)L_\xi^2 (u).
\end{equation*}
Consider also the extension of $L$ to the Sobolev closure,
namely the operator $$\overline{L} \colon \ \overline{C^{4,\alpha}_K(\T^2)}^{H^4} \longrightarrow \ \ \overline{C^{0,\alpha}_K(\T^2)}^{L^2}$$ defined by the same formula as above. From \cref{proposition: spectrum of the Clifford torus} and (\ref{Clifford bifurcations parameters}), we obtain
$$
\forall n \in \N_0, \quad \overline{L}(C_{n,n}^+) + \lambda_{n,n}^+(\tau_m)C_{n,n}^+ = 0,
$$
and $\overline{L}(C_{n,n}^+) = 0$ if and only if $n = m$. From \cref{lemma: Orthogonal basis of K-invariant functions}, it follows that $\overline{L}$ is a Fredholm operator with Fredholm index 0 and $\ker{\overline{L}} = \R C_{m,m}^+$. Therefore, $\ker{L} = \R C_{m,m}^+$, and it only remains to verify that the class of $C_{m,m}^+$ generates $\frac{C^{0,\alpha}_K(\T^2)}{\operatorname{Im}(L)}$. Let 
\begin{equation*}
    P \colon C^{0,\alpha}_K(\T^2) \longrightarrow \frac{C^{0,\alpha}_K(\T^2)}{\operatorname{Im}(L)}, \quad Q\colon \overline{C^{0,\alpha}_K(\T^2)}^{L^2} \longrightarrow \frac{\overline{C^{0,\alpha}_K(\T^2)}^{L^2}}{\operatorname{Im}(\overline{L})},
\end{equation*}
denote the natural projections onto the quotient spaces. Let $v \in C^{0,\alpha}_K(\T^2)$. Note that
$$
Q(v) \in \frac{\overline{C^{0,\alpha}_K(\T^2)}^{L^2}}{\operatorname{Im}(\overline{L})} = \R Q(C_{m,m}^+).
$$
Thus, there exist $r \in \R$ and $u \in \overline{C_K^{4,\alpha}(\T^2)}^{H^4}$ such that 
$$
\overline{L}(u) = rC_{m,m}^+ - v \in C^{0,\alpha}_K(\T^2).
$$
Since $\overline{L}$ is a linear, elliptic differential operator of order 4, by the regularity statement in \cite[Th. 31, p. 464]{BesseEinsteinManifolds} (for a proof, see \cite[Th. 6.2.5, p. 236]{MorreyMultipleIntegrals}), we have that $u \in C^{4,\alpha}_K(\T^2)$. Hence, $\overline{L}(u) = L(u) \in \operatorname{Im}(L)$ and $P(v) = rP(C_{m,m}^+)$.

\end{proof}

\begin{claim}\label{claim:condition_on_second_derivative_of_L}
   $\left.\frac{d}{d\tau}\right|_{\tau=\tau_m} D_0(\mathcal{P}(\tau, \cdot))(C_{m,m}^+) \notin \ima(L)$.
\end{claim}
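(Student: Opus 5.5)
The plan is to use the explicit form of the linearization along the trivial branch. By \cref{Clifford torus second variation operator}, for every $\tau>0$ the differential $D_0(\mathcal{P}(\tau,\cdot))$ acts on $C^{4,\alpha}_K(\T^2)$ as $\mathcal{L}_{\Sigma_\tau}$ (up to the sign convention fixed in \cref{claim:Fredholmness_of_L}). The operator $\mathcal{L}_{\Sigma_\tau} = -\Delta_\tau^2 - 2(4-\tau^2)\Delta_\tau + 16(1-\tau^2)L_\xi^2 - 8(2-\tau^2)$, with $\Delta_\tau = \Delta_1 + (1-\tau^2)L_\xi^2$, is a fourth-order operator with constant coefficients in the coordinates $(\theta,\varphi)$. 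These coefficients depend polynomially on $\tau$ and $\tau^{-1}$. Hence the $\tau$-derivative can be taken coefficientwise. By \cref{proposition: spectrum of the Clifford torus}, for every $\tau$ we have
\[
D_0(\mathcal{P}(\tau,\cdot))(C_{m,m}^+) = -\lambda_{m,m}^+(\tau)\, C_{m,m}^+ ,
\]
because $C_{m,m}^+$ is an eigenfunction for all $\tau$ and $x$ does not depend on $\tau$. Differentiating in $\tau$ at $\tau_m$ then gives
\[
\left.\frac{d}{d\tau}\right|_{\tau=\tau_m} D_0(\mathcal{P}(\tau,\cdot))(C_{m,m}^+) = -\frac{d\lambda_{m,m}^+}{d\tau}(\tau_m)\, C_{m,m}^+ .
\]

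Next I will show this is nonzero. The proof of \cref{theorem: Stability of the Clifford torus} gives
\[
\frac{d\lambda_{m,m}^+}{d\tau}(\tau) = -\frac{64m^4}{\tau^5} - \frac{64m^2}{\tau^3} - 16\tau <0
\]
for all $\tau>0$ and all $m\in\N_0$; for $m=0$ it equals $-16\tau$. The vector above is therefore a nonzero multiple of $C_{m,m}^+$.

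It remains to show $C_{m,m}^+\notin \ima(L)$. By \cref{claim:Fredholmness_of_L} the cokernel of $L$ is one-dimensional and generated by the class of $C_{m,m}^+$, so it suffices to observe that this class is nonzero. Suppose $L(u)=C_{m,m}^+$ for some $u\in C^{4,\alpha}_K(\T^2)$. Since $\overline{L}$ is diagonal in the orthogonal basis $\{C_{n,n}^+\}$ of \cref{lemma: Orthogonal basis of K-invariant functions}, it is symmetric with respect to $\ip{\cdot}{\cdot}_{L^2}$ on that span, and $\overline{L}(C_{m,m}^+)=0$. Pairing with $C_{m,m}^+$ gives
\[
\ip{L u}{C_{m,m}^+}_{L^2} = -\sum_n \lambda^+_{n,n}(\tau_m)\, u_n \ip{C_{n,n}^+}{C_{m,m}^+}_{L^2} = -\lambda_{m,m}^+(\tau_m)\, u_m \|C_{m,m}^+\|^2 = 0,
\]
where $u_n$ are the coefficients of $u$ in that basis. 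But the left-hand side must equal $\|C_{m,m}^+\|_{L^2}^2>0$, which is a contradiction.

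The main point needing care is justifying the $\tau$-differentiation and the identification of $D_0\mathcal{P}(\tau,\cdot)$ with the second variation operator for all nearby $\tau$, not only at $\tau_m$. This follows from the smoothness in \cref{claim:smoothness_of_P} and from the fact that the graphical parametrization $x_f$ is independent of $\tau$ by \cref{eq: parametrização grafica do Clifford}. With that in place, the derivative of the linearization is simply the derivative of the scalar eigenvalue, which transversality requires to be nonzero.
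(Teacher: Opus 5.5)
Your proposal is correct and follows the paper's argument. Like the paper, you use that $C_{m,m}^+$ is an eigenfunction of $\mathcal{L}_{\Sigma_\tau}$ for every $\tau$, so the $\tau$-derivative equals $\frac{d\lambda_{m,m}^+}{d\tau}(\tau_m)\,C_{m,m}^+$ up to sign, which is nonzero by the explicit formula; you then conclude from the one-dimensional cokernel spanned by the class of $C_{m,m}^+$, and your $L^2$-pairing check that $C_{m,m}^+\notin\ima(L)$ makes explicit a step the paper leaves implicit in \cref{claim:Fredholmness_of_L}.
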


\begin{proof}

    By \cref{Clifford torus second variation operator,claim:Fredholmness_of_L,proposition: spectrum of the Clifford torus}, we obtain
    \begin{align*}
        \left.\frac{d}{d\tau}\right|_{\tau=\tau_m} D_0(\mathcal{P}(\tau, \cdot))(C_{m,m}^+) 
        &= \left.\frac{d}{d\tau}\right|_{\tau=\tau_m} \mathcal{L}_{\Sigma_\tau}(C_{m,m}^+) \\ 
        &= \left.\frac{d}{d\tau}\right|_{\tau=\tau_m} \lambda_{m,m}^+(\tau) C_{m,m}^+ \\
        &= \left.\frac{d}{d\tau}\right|_{\tau=\tau_m} \big(\frac{16m^4}{\tau^4} + \frac{32m^2}{\tau^2} + 16 -56m^2 - 8\tau^2\big)C_{m,m}^+ \\
        &= \underbrace{\big(-\frac{64m^4}{\tau_m^5} - \frac{64m^2}{\tau_m^3} -16\tau_m\big)}_{ \ne \ 0}C_{m,m}^+ \notin \ima(L). 
    \end{align*}
    
\end{proof}

By \cref{claim:smoothness_of_P,claim:Fredholmness_of_L,claim:condition_on_second_derivative_of_L}, we may apply the Crandall--Rabinowitz \cref{Crandall-Rabinowitz} to bifurcate the path $\tau \mapsto (\tau,0)$, as roots of $\mathcal{P}$, at $\tau=\tau_m$. More precisely, there exist $\delta_m > 0$, an open neighbourhood
$$
\mathcal{U}_m \subset (0,+\infty) \times C_K^{4,\alpha}(\T^2)
$$
of $(\tau_m,0)$, and a smooth path 
$$
\gamma_m \colon (-\delta_m,\delta_m) \to \mathcal{U}_m, \quad \gamma_m(t)=(\tau_m(t),f_m(t)),
$$ 
such that 
\begin{enumerate}
    \item[(i)] $\tau_m(0) = \tau_m$, $f_m(0) = 0 \in C^{4,\alpha}_K(\T^2)$, $f_m(t) \neq 0$ when $t \ne 0$, and $f_m'(0)=C_{m,m}^+$,
    \item[(ii)] $\{\mathcal{P}(\tau,f) = 0\} \cap \mathcal{U}_m = \big(\{(\tau,0) \mid \tau>0 \} \cup \ima(\gamma_m)\big) \cap \mathcal{U}_m$.
\end{enumerate}
From $(i)$ and $(ii)$, it follows that $\tau_m$ is a $K$--invariant bifurcation parameter for $\Sigma_\tau$ as a Willmore surface such that properties $(a)$ and $(b)$ of \cref{theorem: Bifurcations of the Clifford Torus} hold. By the \cref{remark: behavior of the sequence tau_m,lemma: Orthogonal basis of K-invariant functions}, if $\tau \ne \tau_m$ for every $m \in \N_0$, then 
$$
D_0\big(\mathcal{P}(\tau, \cdot )\big) \colon C^{4,\alpha}_K(\T^2) \to C^{0,\alpha}_K(\T^2)
$$
is a linear isomorphism. Therefore, by the implicit function theorem, the set all $K$--invariant bifurcation parameter of the Clifford Torus as a Willmore surface is exactly $\{\tau_m\}_{m \in \N_0}$.

\medskip
It only remains to verify property $(c)$: Suppose that there exist $m \ge 2$, $t \in (-\delta_m,\delta_m)$, $\psi \in \iso(\Sphere^3_{\tau_m(t)}) = U^+ \cup U^-$, and a Hopf torus $\hopftori \subset \Sphere^3_{\tau_m(t)}$ such that $\ima(x_{f_m(t)}) = \psi(\hopftori)$. We will show that $t=0$.

\medskip
Since the functions 
\begin{align*}
    s \mapsto \willmore\big(\ima(x_{f_m(s)})\big), \quad s \mapsto \willmore(\widetilde{\Sigma}_{\tau_m(s)}), \quad \text{and} \quad s \mapsto \tau_m(s),
\end{align*}
are continuous, with $\tau_m(0) = \tau_m$ and $\ima(x_{f_m(0)}) = \Sigma_\tau$, by \cref{remark: behavior of the sequence tau_m,remark: Willmore energy of Hopf tori family} we may decrease $\delta_m$ to ensure that 
$$
\begin{cases}
    \tau_m(t) < \sqrt{2}, \quad &\text{if} \quad m \in \{2,3\}, \\
    \willmore\big(\ima(x_{f_m(t)})\big)>\willmore(\widetilde{\Sigma}_{\tau_m(t)}), \quad &\text{if} \quad m \geq 4,
\end{cases}
$$
From the fact that $f_m(t)$ is $K$--invariant, we have 
$$
\psi^{-1} \{R_{\theta,-\theta}\} \psi \subset \iso_0(\hopftori),
$$ 
where $\iso_0(\hopftori)$ is the connected component of the identity of $\iso(\hopftori)$. Indeed,
\begin{align}\label{eq. rotações fixando imagem isometrica do toro de hopf}
    \{R_{\theta,-\theta}\} \subset \fix\big(\ima(x_{f_m(t)})\big) &= \fix(\psi(\hopftori)) = \psi\fix(\hopftori)\psi^{-1}.
\end{align}
From the fact that $\hopftori = \pi^{-1}(\ima(\beta))$, we also have 
$$
\{R_{\varphi,\varphi}\} \subset \iso_0(\hopftori).
$$ 

\medskip
Denote $H_1 = \{R_{\varphi,\varphi}\}$ and $H_2 = \psi^{-1} \{R_{\theta,-\theta}\}\psi$. Notice that $H_1$ and $H_2$ are compact one–
dimensional Lie subgroups of $\iso_0(\hopftori)$. Also, $H_1 \ne H_2$ because 
$$
H_1 = \{ \psi \in U(2) \mid \psi  \zeta = \zeta \psi \quad \forall \zeta \in U(2) \}
$$
is the centre of $U(2)$. Since $\hopftori$ is a flat torus, there exists a group isomorphism 
$$
\iso_0(\hopftori) \to \R^2/\Z^2.
$$  
Therefore,
$$
H_3:=\operatorname{span}\left\{H_1,H_2\right\} = \iso_0(\hopftori) \subset U^+ = U(2),
$$ 
and $H_3$ is a maximal torus of $U^+$. By the Maximal Torus Theorem (see e.g. \cite[Th. 11.9, p. 316]{Hall2015}) there exists $\zeta \in U^+$ such that 
$$
H_3  = \zeta^{-1} \{R_{\mu,\nu}\}\zeta.
$$
Since $H_3 \subset \fix(\hopftori)$, we obtain
$$
\{R_{\mu,\nu}\} \subset \zeta \fix(\hopftori)\zeta^{-1} = \fix\big(\zeta(\hopftori)\big).
$$
    
\begin{claim}\label{claim: Rigidity of Symmetries of Hopf tori}
    Let $\Sigma \subset \berger$ be a connected surface. If $\{R_{\mu,\nu}\}_{\mu,\nu \in \R} \subset \fix (\Sigma)$, then there exists a curve $\gamma$ parametrizing a parallel circle $\Sphere^2\left(\frac{1}{2}\right) \cap (\C \times \{c_0\})$, at some height $c_0 \in (-\frac{1}{2},\frac{1}{2})$, such that $\Sigma = \Sigma^{\gamma}$ is a Hopf Torus. Furthermore, if $\Sigma$ is also a Willmore surface, then
    \begin{align*}
        \tau \in (0,\sqrt{2}] \ &\Rightarrow \ \Sigma = \Sigma_\tau. \\
        \tau \in (\sqrt{2},\infty) \ &\Rightarrow \ \Sigma =\Sigma_\tau \ \ \text{or} \ \ \Sigma = \widetilde{\Sigma}_\tau.
    \end{align*}   
\end{claim}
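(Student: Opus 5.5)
The argument splits into two parts: first the torus action forces $\Sigma$ to be a union of orbits, hence a Hopf torus over a parallel; then the Willmore condition singles out which parallels occur, via \cref{corollary: Classification of Hopf Willmore surfaces}.

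\emph{Step 1: orbit structure.} The torus $\{R_{\mu,\nu}\}$ acts on $\Sphere^3$ with orbits the tori $\{|z|=r_1,\,|w|=r_2\}$ for $r_1,r_2>0$, $r_1^2+r_2^2=1$. The two exceptional orbits are the circles $\{w=0\}$ and $\{z=0\}$. Under $\pi$, these orbits are exactly the preimages of the parallels $\Sphere^2\left(\frac12\right)\cap(\C\times\{c_0\})$ with $c_0=\frac12(r_1^2-r_2^2)$, and of the poles.

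Take $p\in\Sigma$. Invariance gives $\{R_{\mu,\nu}(p)\}\subset\Sigma$. If $p$ lies on a generic orbit, that orbit is a compact embedded $2$-dimensional submanifold contained in the surface $\Sigma$, so it is open in $\Sigma$ (invariance of domain) and also closed. By connectedness $\Sigma$ equals this orbit.

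If $\Sigma$ were contained in the union of the two exceptional circles, it would be at most $1$-dimensional, which is impossible. So some point of $\Sigma$ lies on a generic orbit, and the previous paragraph gives $\Sigma=\pi^{-1}(\text{parallel at height }c_0)=\Sigma^\gamma$ with $c_0\in(-\frac12,\frac12)$. I expect this step to need the most care, namely making the open-closed argument rigorous when $\Sigma$ is merely an immersed connected surface. I would first reduce to $\Sigma$ being an embedded compact surface, as it is in our application: it is the image of the embedding $x_{f_m(t)}$ transported by $\zeta\psi^{-1}$.

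\emph{Step 2: Willmore condition.} Assume now $\Sigma$ is also Willmore. By \cref{corollary: Classification of Hopf Willmore surfaces}, $\gamma$ must be an elastica of $\mathcal F_{4\tau^2}$, i.e.\ satisfy \cref{euler-lagrange equation of elastica}. A parallel has constant geodesic curvature $k$, so the equation reduces to $k\big(k^2+8-4\tau^2\big)=0$. Hence either $k=0$ or $k^2=4(\tau^2-2)$.

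For a parallel in the sphere of radius $\frac12$ at height $c_0$, the curvature satisfies $k=0$ iff $c_0=0$, giving $\Sigma=\Sigma_\tau$. Also, $|k|$ is a strictly increasing function of $|c_0|$ with range $[0,\infty)$. Explicitly, $k^2=\frac{4c_0^2}{1/4-c_0^2}$, and solving $k^2=4(\tau^2-2)$ gives $c_0^2=\frac14\frac{\tau^2-2}{\tau^2-1}$, consistent with \cref{eq: constantes do irmao bifurcado}.

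When $\tau\le\sqrt2$, the second alternative has no solution with $k\neq0$, so only $\Sigma_\tau$ occurs. When $\tau>\sqrt2$, we get $c_0=\pm c(\tau)$. The value $+c(\tau)$ gives $\widetilde\Sigma_\tau$. The value $-c(\tau)$ gives $\swap(\widetilde\Sigma_\tau)=\Sphere^1(b)\times\Sphere^1(a)$, which should also be listed as an alternative, up to the isometry $\swap$. I would note this: the statement presumably identifies $\widetilde\Sigma_\tau$ with its image under $\swap$, or the later use only needs the Willmore energy, which is the same for both.
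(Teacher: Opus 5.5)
Your proof is correct and takes the same route as the paper. The orbit of a point under $\{R_{\mu,\nu}\}$ is an embedded torus, open and closed in $\Sigma$, hence all of $\Sigma$; then \cref{corollary: Classification of Hopf Willmore surfaces}, applied to a parallel of constant geodesic curvature, reduces to $k(k^2+8-4\tau^2)=0$.

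Your two extra observations are both valid and fix small oversights in the paper. First, the paper fixes an arbitrary $(z_0,w_0)\in\Sigma$ without excluding $z_0w_0=0$, where the orbit is only a circle. Second, the height $-c(\tau)$ does yield $\swap(\widetilde{\Sigma}_\tau)=\Sphere^1(b)\times\Sphere^1(a)\neq\widetilde{\Sigma}_\tau$, a case the statement omits. This omission is harmless in the later applications: that torus has the same Willmore energy, and $\swap\in U^+$ can be absorbed into $\zeta$.
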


\begin{proof}
Fix $(z_0,w_0) \in \Sigma$. From the hypothesis, it follows that
$$
\widetilde \Sigma := \{(e^{i\mu}z_0,e^{i\nu}w_0) \in \berger : \mu,\nu \in \R \} \subseteq \Sigma.
$$
Since 
$$
\T^2 \to \Sigma, (e^{i\mu},e^{i\nu}) \mapsto (e^{i\mu}z_0,e^{i\nu}w_0),
$$ is an embedding with image $\widetilde \Sigma$, we see that $\widetilde \Sigma$ is open and closed in $\Sigma$. By connectivity, we obtain $\widetilde \Sigma = \Sigma$. It follows from the definition of $\pi$, see (\ref{eq: Hopf submersion}), that $\pi(\widetilde{\Sigma}) = \Sphere^2\left(\frac{1}{2}\right) \cap (\C \times \{c_0\})$, where $c_0 = \frac{1}{2}(|z_0|^2 - |w_0|^2)$.

\medskip
Let $\gamma$ parametrize the parallel circle at height $c_0$ which has constant geodesic curvature in $\Sphere^2\left(\frac{1}{2}\right)$, so that $\Sigma = \Sigma^\gamma$. Supposing that $\Sigma$ is a Willmore surface, by \cref{corollary: Classification of Hopf Willmore surfaces,remark: trivial bifurcation at sqrt 2}, we conclude.

\end{proof}

By \cref{claim: Rigidity of Symmetries of Hopf tori}, there exists a smooth closed regular curve $\gamma$ parametrizing some horizontal circle in $\Sphere^2\left(\frac{1}{2}\right)$, such that $\zeta(\hopftori) = \Sigma^\gamma$. Since $\zeta$ and $\psi$ are isometries of $\Sphere^3_{\tau_m(t)}$, we obtain that $\Sigma^\gamma$ is a Willmore surface. Since 
$$
\begin{cases}
    \tau_m(t) < \sqrt{2}, \quad &\text{if} \quad m \in \{2,3\}, \\
    \willmore(\Sigma^\gamma)=\willmore\big(\ima(x_{f_m(t)})\big)>\willmore(\widetilde{\Sigma}_{\tau_m(t)}), \quad &\text{if} \quad m \geq 4,
\end{cases}
$$
also by \cref{claim: Rigidity of Symmetries of Hopf tori}, we conclude that $\Sigma^\gamma = \Sigma_{\tau_m(t)}$.
Thus,
\begin{equation*}
    \{R_{\theta,-\theta}\} \subset \fix\big(\psi(\hopftori)\big) = \fix\big((\psi \circ \zeta^{-1})(\Sigma^\gamma)\big) = \fix\big((\psi \circ \zeta^{-1})(\Sigma_{\tau_m(t)})\big). 
\end{equation*}

    \begin{claim}\label{claim: Rigidity of symmetries of clifford torus}
    Let $\tau \neq 1$ and $\rho \in \iso(\berger)$. If $\{R_{\theta, -\theta}\}_{\theta \in \R} \subset \fix(\rho(\Sigma_\tau))$, then $\rho \in \fix(\Sigma_\tau)$.
    \end{claim}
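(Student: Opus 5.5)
Since $\rho$ is an isometry, $\fix(\rho(\Sigma_\tau)) = \rho\,\fix(\Sigma_\tau)\,\rho^{-1}$. The hypothesis therefore becomes $H := \rho^{-1}\{R_{\theta,-\theta}\}\rho \subset \fix(\Sigma_\tau)$. The plan is to pin down $\rho$ through how it conjugates circle subgroups of $U(2)$, and then to use the fact that the isometries taking $\Sigma_\tau$ to itself are exactly those in the explicit group of \cref{lemma: Symetries of Clifford Torus}.

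\textbf{Step 1: $H$ lies in the maximal torus.} Recall $\iso(\berger)=U^+\cup U^-$ for $\tau\neq 1$. The circle $\{R_{\theta,-\theta}\}$ lies in $U^+$, which is normal of index two, so $H\subset U^+$ is again a connected circle subgroup. By \cref{lemma: Symetries of Clifford Torus}, the identity component of $\fix(\Sigma_\tau)$ is $\T:=\{R_{\mu,\nu}\}$, the diagonal maximal torus of $U(2)$. Since $H$ is connected, $H\subset \T$.

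\textbf{Step 2: identify which circle $H$ is.} The circle $\{R_{\theta,-\theta}\}$ lies in $SU(2)$ and is the diagonal maximal torus of $SU(2)$. Its conjugate $H$ therefore lies in $SU(2)\cap\T=\{R_{\theta,-\theta}\}$. Both are circles, so $H=\{R_{\theta,-\theta}\}$.

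If $\rho\in U^-$, write $\rho=\conj\circ\psi$ with $\psi\in U^+$. Since $\conj$ normalizes $\{R_{\theta,-\theta}\}$ (it sends $R_{\theta,-\theta}$ to $R_{-\theta,\theta}$), we may replace $\rho$ by $\psi$. This is harmless because $\conj\in\fix(\Sigma_\tau)$, so $\rho\in\fix(\Sigma_\tau)$ if and only if $\psi$ does. We may thus assume $\rho\in U(2)$ and $\rho$ normalizes $\{R_{\theta,-\theta}\}$.

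Now write $\rho=e^{i\alpha}\sigma$ with $\sigma\in SU(2)$. The scalar factor $e^{i\alpha}$ is some $R_{\alpha,\alpha}$, which lies in $\fix(\Sigma_\tau)$, so it suffices to treat $\sigma$. An element of $SU(2)$ normalizing its diagonal maximal torus lies in the normalizer, which is the torus together with the Weyl element $\begin{pmatrix}0&-1\\1&0\end{pmatrix}$. That Weyl element equals $R_{\pi,0}\circ\swap$.

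\textbf{Step 3: conclude.} By the above, $\rho$ is a product of elements $R_{\mu,\nu}$, $\swap$ and $\conj$. All of these lie in $\fix(\Sigma_\tau)$ by \cref{lemma: Symetries of Clifford Torus}, so $\rho\in\fix(\Sigma_\tau)$.

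The main obstacle is Step 2, specifically the reduction from $U^-$ to $U^+$ and the normalizer computation. I would check carefully that $\conj\circ\psi$ conjugates the circle correctly, using $\conj\,R_{\mu,\nu}\,\conj=R_{-\mu,-\nu}$. For the normalizer, the direct check is this: if $\sigma\,\mathrm{diag}(e^{i\theta},e^{-i\theta})\,\sigma^{-1}$ is diagonal for all $\theta$, then $\sigma$ permutes the eigenlines $\C\times\{0\}$ and $\{0\}\times\C$, so $\sigma$ is either diagonal or antidiagonal.
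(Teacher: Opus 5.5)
Your proof is correct and follows the paper's route. You conjugate the circle into $\fix(\Sigma_\tau)$, use connectedness to land in $\{R_{\mu,\nu}\}$, force $\rho \in U^+$ to be diagonal or antidiagonal, and reduce the $U^-$ case to $U^+$ via $\conj$. The only difference is packaging: where the paper reads off $a\overline{b}=0$ from the $(2,1)$ entry of $\rho^{-1}R_{\theta,-\theta}\rho$ at $\theta=0,\frac{\pi}{2}$, you use the normalizer of the maximal torus of $SU(2)$ (equivalently, your eigenline argument). Your identification $H=\{R_{\theta,-\theta}\}$ and the splitting off of the scalar factor are not needed, since diagonality of $\rho^{-1}R_{\pi/2,-\pi/2}\rho$ alone already forces $\rho$ to be diagonal or antidiagonal.
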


    \begin{proof} 
    Suppose that $\{R_{\theta, -\theta}\}_{\theta \in \R} \subset \fix(\rho(\Sigma_\tau))$. From 
    $
    \fix(\rho(\Sigma_\tau))=\rho\fix(\Sigma_\tau)\rho^{-1},
    $
    we have
    $$
    \rho^{-1} \{R_{\theta, -\theta}\}_{\theta \in \R} \ \rho \subset \fix(\Sigma_\tau). 
    $$
    By \cref{lemma: Symetries of Clifford Torus}, 
    $$
    \fix(\Sigma_\tau) =  \bigcup_T \{R_{\mu,\nu} \circ T \mid \mu,\nu \in \R \},
    $$
    where $T$ runs over $\{I, \swap, \conj, \swap \circ \conj\}$. In particular, $\fix(\Sigma_\tau)$ has four connected components. Set
    $$
     \phi \colon \R \to \fix(\Sigma_\tau), \quad \phi(\theta) = \rho^{-1} R_{\theta,-\theta} \ \rho.
    $$ 
    Since $\phi$ is continuous and $\phi(0) = I$, we get $\ima(\phi) \subset \{R_{\mu,\nu}\}$.

    \medskip
    \noindent
    \emph{Case: $\rho \in U^+$.}
    Write $\rho = \begin{pmatrix}
        a & b \\
        c & d
    \end{pmatrix}$ satisfying
    \begin{align*}
    \begin{cases}
       |a|^2 + |b|^2 =|c|^2 + |d|^2 = 1, \\
       a\overline{c} + b\overline{d} = 0.
    \end{cases}
    \end{align*}
    We compute
    \begin{align}
        \phi(\theta) = \rho^{-1} R_{\theta,-\theta}\rho = 
        \begin{pmatrix}
            |a|^2 e^{i\theta} + |c|^2e^{-i\theta} & \overline{a}be^{i\theta} + \overline{c}de^{-i\theta} \\
            a\overline{b}e^{i\theta} + c\overline{d}e^{-i\theta} & |b|^2e^{i\theta}+|d|^2e^{-i\theta}
        \end{pmatrix} \label{algebraic identity of Fix Clifford in U(2)}.
    \end{align}
    Choosing $\theta = 0$, then $\theta = \frac{\pi}{2}$, and looking at the second row and first column of \cref{algebraic identity of Fix Clifford in U(2)} we obtain that either $a=0$ or $b=0$.
    Thus, there exist $\mu,\nu \in \R$ such that either $\rho = R_{\mu,\nu}$ or $\rho = R_{\mu,\nu} \circ \swap$.
    Hence, $\rho \in \fix(\Sigma_\tau)$.

    \medskip
    \noindent
    \emph{Case: $\rho \in U^-$.} 
    We have $\rho = \conj \circ \ \tilde{\rho}$ with $\tilde{\rho} \in U^+$. Since 
    \begin{align*}
        \phi(\theta) 
        &= \rho^{-1} R_{\theta,-\theta} \ \rho \\
        &= (\tilde{\rho})^{-1} \conj R_{\theta,-\theta} \conj\tilde{\rho} \\  
        &= (\tilde{\rho})^{-1} R_{-\theta,\theta} \ \tilde{\rho} \in \fix(\Sigma_\tau),
    \end{align*}
    the previous case implies that $\tilde{\rho} \in \fix(\Sigma_\tau)$, and hence $\rho \in \fix(\Sigma_\tau)$.
    
\end{proof}

We decrease $\delta_m$ again, if necessary, to ensure $\tau_m(s) \neq 1$ for every $s \in (-\delta_m,\delta_m)$. By \cref{claim: Rigidity of symmetries of clifford torus}, we obtain $\psi \circ \zeta^{-1} \in \fix(\Sigma_{\tau_m(t)})$ and 
\begin{equation*}
    \psi(\hopftori) = \psi\big(\zeta^{-1}(\Sigma_{\tau_m(t)})\big) = \Sigma_{\tau_m(t)}.
\end{equation*}
Hence,
\begin{equation*}
    \ima(x_{f_m(t)}) = \psi(\hopftori) = \Sigma_{\tau_m(t)}.
\end{equation*} 

\medskip
Since $\tau_m$ is a $K$--invariant bifurcation parameter of $\Sigma_\tau$, we obtain that $f_m(t) = 0$ and $t = 0$ (see \cref{definition: invariant bifurcation}).

\end{proof}

The argument based on the implicit function theorem, which shows that there are no other $K$--invariant bifurcation parameters for the Clifford torus, actually yields the following stronger result. 

\begin{corollary}[Local rigidity of the Clifford torus as a Willmore surface]
Let $\tilde{\tau}$ be a nondegenerate $K$--invariant Berger parameter for the Clifford torus as a Willmore surface, that is,
\begin{equation*}
    \tilde{\tau} \in (0,+\infty)\setminus\{\tau_m\}_{m\in\N_0}.
\end{equation*}
Then, there exist $\delta=\delta(\tilde{\tau})>0$ and an open neighbourhood of the origin
\begin{equation*}
    U=U_{\tilde{\tau}}\subset C^{4,\alpha}_K(\T^2),
\end{equation*}
such that if $f\in U$ and $x_f:\T^2\to\berger$ is a Willmore surface for some
\begin{equation*}
    \tau\in(\tilde{\tau}-\delta,\tilde{\tau}+\delta),
\end{equation*}
then $f=0$, and hence $x_f=x$ is the parametrization of the Clifford torus.
\end{corollary}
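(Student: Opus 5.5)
The plan is to apply the implicit function theorem to the map $\mathcal{P}\colon (0,+\infty)\times U\to C^{0,\alpha}_K(\T^2)$ introduced in the proof of \cref{theorem: Bifurcations of the Clifford Torus}, at the point $(\tilde{\tau},0)$. By \cref{claim:smoothness_of_P}, $\mathcal{P}$ is smooth. Since the Clifford torus is Willmore in every Berger sphere, $\mathcal{P}(\tau,0)=0$ for all $\tau>0$.

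The key step is to show that $D_0(\mathcal{P}(\tilde{\tau},\cdot))\colon C^{4,\alpha}_K(\T^2)\to C^{0,\alpha}_K(\T^2)$ is a Banach isomorphism. Its formula is the operator $\mathcal{L}_{\Sigma_{\tilde\tau}}$ of \cref{Clifford torus second variation operator}. By \cref{proposition: spectrum of the Clifford torus} it acts on the basis $\{C_{n,n}^+\}_{n\in\N_0}$ of \cref{lemma: Orthogonal basis of K-invariant functions} diagonally, with eigenvalues $-\lambda_{n,n}^+(\tilde\tau)$.

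\begin{itemize}
\item By \cref{remark: behavior of the sequence tau_m}, $\lambda_{n,n}^+$ vanishes only at $\tau_n$. Since $\tilde\tau\notin\{\tau_m\}$, none of these eigenvalues is zero.
\item Moreover $|\lambda_{n,n}^+(\tilde\tau)|\to\infty$ as $n\to\infty$, because the leading term is $16n^4/\tilde\tau^4$. Hence the eigenvalues are bounded away from zero.
\item Consequently the Sobolev extension $\overline{L}\colon \overline{C^{4,\alpha}_K}^{H^4}\to\overline{C^{0,\alpha}_K}^{L^2}$ is bijective. Ellipticity also makes it Fredholm of index zero, so trivial kernel gives surjectivity.
\end{itemize}

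Next we transfer this to the Hölder spaces, exactly as in \cref{claim:Fredholmness_of_L}. Injectivity of $L$ follows from that of $\overline L$, since $C^{4,\alpha}_K\subset H^4$. For surjectivity, given $v\in C^{0,\alpha}_K$ we solve $\overline L u=v$ with $u\in H^4$. Elliptic Schauder regularity (the cited Besse/Morrey result) then gives $u\in C^{4,\alpha}$, and $u$ is $K$-invariant since it lies in the closure. Boundedness of the inverse follows from the open mapping theorem.

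The implicit function theorem then yields $\delta>0$, a neighbourhood $U_{\tilde\tau}$ of $0$ and a unique smooth map $\tau\mapsto f(\tau)$ such that the zeros of $\mathcal{P}$ in $(\tilde\tau-\delta,\tilde\tau+\delta)\times U_{\tilde\tau}$ are exactly $\{(\tau,f(\tau))\}$. Since $(\tau,0)$ is already a zero, uniqueness forces $f(\tau)=0$. So any Willmore $x_f$ with $f\in U_{\tilde\tau}$ and $\tau$ in the interval has $f=0$, and $x_f=x$ by \cref{eq: parametrização grafica do Clifford}.

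The main (mild) obstacle is the regularity transfer from $H^4$ to $C^{4,\alpha}$. It is handled exactly as in \cref{claim:Fredholmness_of_L}.
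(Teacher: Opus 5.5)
Your proposal is correct and is essentially the paper's argument. In the proof of \cref{theorem: Bifurcations of the Clifford Torus}, the paper notes that for $\tau\notin\{\tau_m\}_{m\in\N_0}$ the linearization $D_0(\mathcal{P}(\tau,\cdot))\colon C^{4,\alpha}_K(\T^2)\to C^{0,\alpha}_K(\T^2)$ is an isomorphism, via the diagonalization on $\{C^+_{n,n}\}$ and the regularity argument of \cref{claim:Fredholmness_of_L}, and then applies the implicit function theorem at $(\tilde\tau,0)$ using that $(\tau,0)$ is always a zero, exactly as you do; you simply supply the details the paper leaves implicit.

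One small remark: the growth $\lambda^+_{n,n}(\tilde\tau)\sim 16n^4/\tilde\tau^4$, matching the $H^4$ weights $(1+2n^2)^2$, already gives surjectivity onto the $K$-invariant closure directly. So you do not need a separate ellipticity/index-zero argument for the restricted operator.
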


\begin{theorem}[Bifurcations of the bifurcated brother]\label{theorem: Bifurcations of the bifurcated brother}
The set of $K$--invariant parameters of the bifurcated brother as a Willmore surface is precisely $\{\tilde{\tau}_2,\tilde{\tau}_3\}$, where $\tilde{\tau}_m$ is the unique positive solution to the equation $\lambda^+_{m,m}(\tau) = 0$. Moreover, we may parametrize the bifurcation at $\tilde{\tau}_m$ by a smooth path
$$
\gamma_m \colon (- \delta_m, \delta_m) \to \mathcal{U}_m, \quad \gamma_m(t)=(\tau_m(t),f_m(t)),
$$
in a neighbourhood  $\mathcal{U}_m \subset (0,+\infty) \times C^{4,\alpha}_K(\T^2)$ of $(\tilde{\tau}_m,0)$ such that the following hold.
\begin{enumerate} 
    \item[$(a)$] If $(\tau,f) \in \mathcal{U}_m$ yields a Willmore surface $y^\tau_f \colon \T^2 \to \berger$, then $f=0$ or $(\tau,f) \in \ima(\gamma_m)$.
    \item[$(b)$] The variation of $y^{\tau_m}$ given by 
       $$
       F_m \colon (-\delta_m,\delta_m) \times \T^2 \to \Sphere^3, \quad F_m(t,p)= y^{\tau_m(t)}_{f_m(t)}(p),
       $$ 
       has variational vector field 
       $$
       \left.\frac{\partial}{\partial t}\right|_{t=0}F_m (t,p) =  \big(\phi'(\tilde{\tau}_m)\tau_m'(0) + C_{m,m}^+(p)\big)\widetilde{N}_{\tilde{\tau}_m}(p).
       $$
       where $\phi(\tau)=\frac{1}{2}\arccos\Big(\frac{\sqrt{\tau^2-2}}{\sqrt{\tau^2-1}}\Big)$.
    \item[$(c)$] For every $t\in(-\delta_m,\delta_m)$ with $t \neq 0$, there exists no isometry $\psi \in \iso\big(\Sphere^3_{\tau_m(t)}\big)$ such that $\psi\left(\operatorname{Im}\big(y^{\tau_m(t)}_{f_m(t)}\big)\right)$ is a Hopf torus.
    \end{enumerate}
\end{theorem}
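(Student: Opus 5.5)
The plan is to mirror the proof of \cref{theorem: Bifurcations of the Clifford Torus}, with $x$ replaced by $y^\tau$ and $\mathcal{L}_{\Sigma_\tau}$ by $\mathcal{L}_{\widetilde{\Sigma}_\tau}$. I would define
\[
\widetilde{\mathcal{P}} \colon (\sqrt{2},+\infty)\times U \to C^{0,\alpha}_K(\T^2), \qquad \widetilde{\mathcal{P}}(\tau,f)=\mathcal{E}_\willmore(y^\tau_f),
\]
using \cref{eq: parametrização grafica do BB}. The $K$-invariance of the codomain follows exactly as in \cref{claim: K-invariance of Euler-Lagrage equation}, since $k\circ y^\tau_f = y^\tau_f\circ\varphi_k$ for $k\in K$. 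Smoothness follows as in \cref{claim:smoothness_of_P}. The only new point is that $y^\tau$, $a(\tau)$, $b(\tau)$ and $\widetilde N_\tau$ depend smoothly on $\tau$ for $\tau>\sqrt{2}$, so the function $h$ there is still smooth in $\tau$. By \cref{remark: trivial bifurcation at sqrt 2}, $\widetilde{\mathcal{P}}(\tau,0)=0$ for all $\tau>\sqrt{2}$.

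The linearization $D_0\widetilde{\mathcal{P}}(\tau,\cdot)$ equals $\mathcal{L}_{\widetilde{\Sigma}_\tau}$. By \cref{proposition: spectrum of the irmao paralelo} and \cref{lemma: Orthogonal basis of K-invariant functions}, it is diagonal on the $K$-invariant basis $\{C^+_{n,n}\}_{n\in\N_0}$, with eigenvalues $-\tilde\lambda^+_{n,n}(\tau)$. The heart of the proof is a one-variable analysis of $\tau\mapsto\tilde\lambda^+_{n,n}(\tau)$ on $(\sqrt{2},\infty)$. (I read the ``$\lambda^+_{m,m}$'' in the statement as $\tilde\lambda^+_{m,m}$.) I would substitute $a^2+b^2=1$, $a^2-b^2=2c$, $a^2b^2=\frac{1}{4(\tau^2-1)}$ and $H=2\sqrt{\tau^2-2}$, and clear denominators to get an explicit function of $\tau$ (or of $s=\tau^2$). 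The claims to prove are:
\begin{enumerate}
\item[(i)] $\tilde\lambda^+_{0,0}=16(\tau^2-2)(\tau^2-1)>0$;
\item[(ii)] $\tilde\lambda^+_{1,1}<0$, by the formula in the proof of \cref{theorem: instabilidade do irmao paralelo};
\item[(iii)] for $n=2,3$ there is exactly one root $\tilde\tau_n>\sqrt{2}$, with nonzero derivative there;
\item[(iv)] for $n\ge4$, $\tilde\lambda^+_{n,n}$ has no root in $(\sqrt{2},\infty)$.
\end{enumerate}
For (iii) I would use that the sign at $\tau\searrow\sqrt{2}$ matches $\lambda^+_{n,n}(\sqrt2)$, which is negative for $n=2,3$ since $\tau_2,\tau_3<\sqrt{2}$ by \cref{Clifford bifurcations parameters}, together with the leading term as $\tau\to\infty$ and monotonicity. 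For (iv) I would bound the quartic term $(\mu+\frac{1-\tau^2}{\tau^2}4n^2)^2$ from below against the negative terms. This spectral analysis is the main obstacle: unlike the Clifford case, the terms involving $a,b,H$ are not polynomial in $\tau$, and the $H$-term can be large and negative, so establishing uniqueness and transversality of the roots for $n=2,3$ and the absence of roots for $n\ge4$ will require careful estimates, possibly after the change of variable $s=\tau^2-1$.

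With this in hand, the Fredholm and cokernel argument of \cref{claim:Fredholmness_of_L} goes through verbatim with kernel $\R C^+_{m,m}$. Transversality $\frac{d}{d\tau}\tilde\lambda^+_{m,m}(\tilde\tau_m)\ne0$ gives the analogue of \cref{claim:condition_on_second_derivative_of_L}. Crandall--Rabinowitz then yields $\gamma_m$ and item (a), and the implicit function theorem at all other $\tau$ shows that $\{\tilde\tau_2,\tilde\tau_3\}$ are the only bifurcation parameters. For (b), I would differentiate $y^{\tau_m(t)}_{f_m(t)}$ at $t=0$. The $f$-part gives $C^+_{m,m}\widetilde N$. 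The $\tau$-part comes from $y^\tau=(\cos\phi(\tau)z,\sin\phi(\tau)w)$, since $a=\cos\phi$ and $b=\sin\phi$ (because $\cos 2\phi=2c$). Its derivative $\phi'(-\sin\phi\, z,\cos\phi\, w)=\phi'\widetilde N$ gives the extra normal term $\phi'(\tilde\tau_m)\tau_m'(0)\widetilde N$.

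For (c), I would repeat the Maximal Torus argument. If $\psi(\mathrm{Im}\,y_{f_m(t)})$ is a Hopf torus, then $\{R_{\theta,-\theta}\}$ and the centre generate a maximal torus, and \cref{claim: Rigidity of Symmetries of Hopf tori} forces the surface to be congruent to $\Sigma_{\tau_m(t)}$ or $\widetilde\Sigma_{\tau_m(t)}$. Congruence with $\Sigma_{\tau_m(t)}$ is excluded for small $t$ by continuity of Willmore energy, since $\willmore(\widetilde\Sigma_\tau)<\willmore(\Sigma_\tau)$ by \cref{remark: Willmore energy of Hopf tori family}. In the remaining case, an analogue of \cref{claim: Rigidity of symmetries of clifford torus} using the description of $\fix(\Sigma)$ in \cref{lemma: Symetries of Clifford Torus} (now without $\swap$) shows that the surface equals $\widetilde\Sigma_{\tau_m(t)}$ itself, so $f_m(t)=0$. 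The last step needs care, since $y_{f_m(t)}$ is graphical over $\widetilde\Sigma_{\tilde\tau_m}$ rather than $\widetilde\Sigma_{\tau_m(t)}$. I would handle it by uniqueness in item (a): $\widetilde\Sigma_{\tau}$ corresponds to the point $(\tau,f)$ with $f$ the constant $\phi(\tau)-\phi(\tilde\tau_m)$, which lies on the trivial branch. Hence $t=0$.
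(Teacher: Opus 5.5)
Your architecture matches the paper's. You apply Crandall--Rabinowitz to $\widetilde{\mathcal{P}}(\tau,f)=\mathcal{E}_{\willmore}(y^\tau_f)$ on $K$--invariant functions, compute (b) from $y^\tau=(\cos\phi(\tau)z,\sin\phi(\tau)w)$, and use the maximal-torus argument for (c). Your item (i) is right: the paper's claim that $\tilde\lambda^+_{0,0}<0$ is a slip, and a harmless one, since only $\tilde\lambda^+_{0,0}\neq0$ is used. However, two steps are not carried through.

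\textbf{The spectral analysis is left open, and the obstacle is misdiagnosed.} On the diagonal $m=n$ all square roots cancel. First, $\frac{1}{a^2}+\frac{1}{b^2}=\frac{1}{a^2b^2}=4(\tau^2-1)$, so the bracket in \cref{proposition: spectrum of the irmao paralelo} equals $4m^2(\tau^2-1)^2/\tau^2$. Second, $\frac{b}{a^3}-\frac{a}{b^3}=-\frac{2c}{(ab)^3}$ and $Hc=\frac{\tau^2-2}{\sqrt{\tau^2-1}}$, so the $H$--term is exactly $-32m^2(\tau^2-1)(\tau^2-2)$. With $u=\tau^2$ one gets $\tilde\lambda^+_{m,m}=\frac{16}{u^2}(u-1)P_m(u)$, where
\[
P_m(u)=(m^2-1)^2u^3-(3m^4-3m^2+2)u^2+3m^2(m^2-1)u-m^4 .
\]
The paper's general formula prints $4m^2$ in place of $3m^2$, but its $P_2$, $P_3$, $P_m(2)$, $P_m'(2)$ and $P_m''$ all use the coefficient above. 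From here everything is elementary:
\begin{itemize}
\item $P_m(2)=m^2(m^2-10)$, which is your observation at $\tau=\sqrt2$.
\item For $m\ge4$, $P_m'(2)=3m^4-15m^2+4>0$ and $P_m''>0$ on $[2,\infty)$, so $P_m>0$ there.
\item For $m=2,3$, the cubic has at most one critical point in $(2,\infty)$, and it is a local minimum. Hence $P_m(2)<0$ forces a unique root $\tilde u_m>2$ with $P_m'(\tilde u_m)>0$, and so $\frac{d}{d\tau}\tilde\lambda^+_{m,m}(\tilde\tau_m)=32\tilde\tau_m^{-3}(\tilde\tau_m^2-1)P_m'(\tilde u_m)>0$.
\end{itemize}
The global monotonicity you invoke actually fails for $m=2$: $P_2'(2)=-8$, so $\tilde\lambda^+_{2,2}$ first decreases from $\tau=\sqrt2$. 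Uniqueness comes from the shape of the cubic, not from monotonicity. Without this computation you have proved neither transversality at $\tilde\tau_2,\tilde\tau_3$ nor the absence of other parameters.

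\textbf{The rigidity step in (c) does not hold for $\widetilde\Sigma_\tau$.} The matrix computation in the proof of \cref{claim: Rigidity of symmetries of clifford torus} only shows that $\rho=\psi\circ\zeta^{-1}$ is, up to $\conj$, diagonal or antidiagonal, i.e.\ of the form $R_{\mu,\nu}$ or $R_{\mu,\nu}\circ\swap$. For the Clifford torus both forms lie in $\fix(\Sigma_\tau)$. For $\widetilde\Sigma_\tau$ the second does not, precisely because $\swap\notin\fix(\widetilde\Sigma_\tau)$. Concretely, $\rho=\swap$ satisfies $\{R_{\theta,-\theta}\}\subset\fix(\rho(\widetilde\Sigma_\tau))$, yet $\swap(\widetilde\Sigma_\tau)=\Sphere^1(b)\times\Sphere^1(a)\neq\widetilde\Sigma_\tau$.

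Your argument therefore leaves open $\ima(y^{\tau_m(t)}_{f_m(t)})=\swap(\widetilde\Sigma_{\tau_m(t)})$. Energy cannot exclude this, because $\swap$ is an isometry. The paper rules it out by localisation. It works on $I_m\times B(r_m)$, with $I_m\ni\tilde\tau_m$ and $r_m$ chosen so that all graphs $y^\tau_f$ lie in a fixed tubular neighbourhood $V(2r_m,\tilde\tau_m)$ of $\widetilde\Sigma_{\tilde\tau_m}$. This neighbourhood is disjoint from $\swap(\widetilde\Sigma_\tau)$ for $\tau\in I_m$; these are properties (P1)--(P2). Some such localisation is needed anyway. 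The normal injectivity radius $\min\{\phi(\tau),\frac{\pi}{2}-\phi(\tau)\}$ of $\widetilde\Sigma_\tau$ tends to $0$ as $\tau\to\infty$, so $\widetilde{\mathcal{P}}$ is not defined on $(\sqrt2,\infty)\times U$ for any fixed $U$.

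Finally, your worry at the very end is misplaced. With your own definition, $y^\tau_f$ is the normal graph of $f$ over $\widetilde\Sigma_\tau$ itself. So the trivial branch is $\{f=0\}$, not the constant $\phi(\tau)-\phi(\tilde\tau_m)$, which is the fixed-base description. Once the image equals $\widetilde\Sigma_{\tau_m(t)}$, injectivity of the normal exponential map on the tubular neighbourhood gives $f_m(t)=0$, hence $t=0$.
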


By a continuity argument for the Willmore energy along the paths involved, we can also guarantee that the surfaces bifurcating from $\widetilde{\Sigma}_\tau$ are not isometric to those bifurcating from $\Sigma_\tau$, at least for a short time.  

\begin{proof}
The following arguments are similar to the ones given in the proof of \cref{theorem: Bifurcations of the Clifford Torus}. We highlight the differences. First, we study the eigenvalues $\tilde{\lambda}^+_{m,m}(\tau)$ of the second variation operator $\mathcal{L}_{\widetilde{\Sigma}_\tau}$.
\begin{claim}\label{claim: sinais dos autovalores do irmao bifurcado} 
It holds that
\begin{align*}
\tilde{\lambda}^+_{m,m}
    \begin{cases}
        <0, \quad \text{if $m \in \{0,1\}$}, \\
        >0, \quad \text{if $m \ge4$}.
    \end{cases}
\end{align*}
For $m \in \{2,3\}$, there exists a unique $\tilde{\tau}_m>\sqrt{2}$ such that $\tilde{\lambda}^+_{m,m}(\tilde{\tau}_m)=0$. Moreover,
\begin{equation*}
    \left.\frac{d}{d\tau}\right|_{\tau=\tilde{\tau}_m} \tilde{\lambda}^+_{m,m}(\tau) > 0.
\end{equation*}
Approximately, $\tilde{\tau}_2 \approx 1.767 $ and $\tilde{\tau}_3 \approx 1.439$.
\end{claim}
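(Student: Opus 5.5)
We reduce everything to an explicit one-variable function of $\tau$ by specializing \cref{proposition: spectrum of the irmao paralelo} to $n=m$ with the $+$ sign. Using $a^2+b^2=1$, $a^2-b^2=2c$, $a^2b^2=\frac{1}{4(\tau^2-1)}$ and $H=2\sqrt{\tau^2-2}$, we rewrite every quantity in terms of $s=\tau^2$. The key sub-expressions are
\begin{equation*}
\frac{1}{a^2}+\frac{1}{b^2}=\frac{1}{a^2b^2}=4(s-1), \qquad \frac{b}{a^3}-\frac{a}{b^3}=\frac{b^4-a^4}{a^3b^3}=-\frac{2c}{(ab)^3}.
\end{equation*}
Since $c=\frac12\sqrt{\frac{s-2}{s-1}}$ and $(ab)^3=\frac{1}{8(s-1)^{3/2}}$, the square root in $H$ combines with the one in $c$. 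This gives $2H\big(\frac{b}{a^3}-\frac{a}{b^3}\big)m^2=-16(s-2)(s-1)m^2$, a polynomial in $s$.

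Setting $\mu=m^2\big(4(s-1)+\frac{4(1-s)}{s}\big)=\frac{4m^2(s-1)^2}{s}$, we obtain
\begin{equation*}
\tilde\lambda^+_{m,m}=\mu^2-4\mu+\frac{64m^2(1-s)}{s}-16m^2(s-2)(s-1)+16(s-2)(s-1).
\end{equation*}
Multiplying by $s^2>0$ leaves a polynomial $p_m(s)$ in $s$ and $m^2$. All sign questions then become questions about $p_m$ on $s>2$.

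We then argue case by case.
\begin{itemize}
\item For $m=0$ we get $16(s-2)(s-1)$, which is positive on $s>2$. This contradicts the claimed sign, so either the intended index set or the sign convention of $\tilde\lambda$ must be checked against \cref{proposition: spectrum of the irmao paralelo}. We expect a sign error somewhere, possibly in how the constant term is written, and would first recheck the constant term $-16(\tau^2-2)(1-\tau^2)$.
\item For $m=1$ we compare with the closed form $-\frac{16}{\tau^4}(\tau^2-1)(1+2\tau^4)$ already found in the proof of \cref{theorem: instabilidade do irmao paralelo}. This comparison serves as a consistency check that fixes the correct signs.
\item For $m\ge4$ we show $p_m(s)>0$ for all $s>2$. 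The leading term in $s$ is $16m^4s^2(s-1)^2/s^2\cdot$ positive (degree 4 after clearing denominators, coefficient $16m^4$), which dominates the negative terms of order $m^2s^3$ once $m^2$ is large. We would write $p_m(s)=16m^4(s-1)^4-m^2q(s)+r(s)$ with explicit $q,r$. Then we bound $m^2q(s)\le 16m^4(s-1)^4$ for $m^2\ge16$ and $s\ge2$, by checking the value at $s=2$ and comparing derivatives.
\item For $m\in\{2,3\}$, $p_m$ is an explicit quartic. We show it is negative near $s=2^+$ (or of the appropriate sign there) and positive as $s\to\infty$, so a root exists. For uniqueness we show $p_m$ is strictly monotone on the relevant interval, or that $p_m'$ has a controlled sign, via Descartes' rule of signs in the variable $u=s-2$. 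The derivative condition then follows: $\frac{d}{d\tau}\tilde\lambda=2\tau\frac{d}{ds}\big(p_m/s^2\big)$, and at a root this equals $2\tau p_m'(s)/s^2>0$.
\end{itemize}
The numerical values $\tilde\tau_2\approx1.767$ and $\tilde\tau_3\approx1.439$ come from solving these quartics numerically.

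The main obstacle is the algebra that collapses $H$ and $a,b$ into polynomials in $s$ while getting every sign right. As the $m=0$ check shows, a sign slip changes the conclusion, so the $m=1$ formula will be used to calibrate the result. The uniform positivity for $m\ge4$ is the other nontrivial step; there we would aim for a clean inequality in $u=s-2\ge0$ in which all coefficients are positive once $m^2\ge16$.
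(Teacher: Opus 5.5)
Your $m=0$ computation is correct, and it shows the statement is false in that case: $\tilde\lambda^+_{0,0}(\tau)=16(\tau^2-1)(\tau^2-2)>0$ for all $\tau>\sqrt2$. The paper's own proof obtains the same expression, $16(\tau^4-3\tau^2+2)$, and then asserts it is negative; that is a slip in the paper.

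Do not try to repair this by changing the constant term, because the value is confirmed independently. The constant graphs $f\equiv t$ move $\widetilde\Sigma_\tau$ through Hopf tori over parallel circles of radius $r(t)=\tfrac12\sin\big(2(\phi+t)\big)$. On these tori $\willmore=2\pi^2\tau\big(r^{-1}+4(\tau^2-1)r\big)$, which has a strict minimum at $r=ab=(2\sqrt{\tau^2-1})^{-1}$. This gives $\frac{d^2}{dt^2}\willmore=32\pi^2\tau(\tau^2-2)\sqrt{\tau^2-1}$. Dividing by the area $2\pi^2\tau/\sqrt{\tau^2-1}$ returns exactly $16(\tau^2-1)(\tau^2-2)$.

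So the correct claim is $\tilde\lambda^+_{0,0}>0>\tilde\lambda^+_{1,1}$. Nothing downstream breaks. Crandall--Rabinowitz at $\tilde\tau_m$ only needs $\tilde\lambda^+_{k,k}(\tilde\tau_m)\neq0$ for $k\neq m$, and the instability theorem only uses $m=1$.

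The real error in your reduction is in the $H$-term, not the constant term. Since $2H=4\sqrt{s-2}$ and $\frac{b}{a^3}-\frac{a}{b^3}=-\frac{2c}{(ab)^3}=-8(s-1)\sqrt{s-2}$, that term equals $-32m^2(s-1)(s-2)$, not $-16m^2(s-1)(s-2)$. With your coefficient, $m=1$ gives $\frac{16(s-1)}{s^2}(s^3-4s^2-1)$, which is positive for large $s$. Your $m=1$ calibration would therefore fail, and it would point at the $H$-term; the $m=0$ value does not involve that term at all. With the correct coefficient,
\[
s^2\,\tilde\lambda^+_{m,m}=16(s-1)P_m(s),\qquad P_m(s)=(m^2-1)^2s^3-(3m^4-3m^2+2)s^2+3m^2(m^2-1)s-m^4 .
\]
This gives $P_0=s^2(s-2)$ and $P_1=-(2s^2+1)$, matching both checks. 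The paper displays the middle coefficient as $3m^4-4m^2+2$, which is a typo: its explicit $P_2$, $P_3$, $P_m(2)$, $P_m'(2)$ and $P_m''$ all correspond to $3m^4-3m^2+2$.

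Your large-$s$ heuristic for $m\ge4$ is also off. The $H$-term and the constant term contribute at order $s^4$, so the true leading coefficient is $16(m^2-1)^2$, and the threshold $m\ge4$ is decided at $s=2$. Your final idea of expanding in $u=s-2$ does work cleanly for all $m\ge2$:
\[
P_m(2+u)=(m^2-1)^2u^3+(3m^4-9m^2+4)u^2+(3m^4-15m^2+4)u+m^2(m^2-10).
\]
For $m\ge4$ every coefficient is positive, so $\tilde\lambda^+_{m,m}>0$ on $\tau>\sqrt2$.

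For $m=2,3$ the expansions are $9u^3+16u^2-8u-24$ and $64u^3+166u^2+112u-9$. Each has exactly one sign change, so Descartes' rule gives a unique positive root. That root is simple, with $P_m'>0$ there, and
\[
\frac{d}{d\tau}\tilde\lambda^+_{m,m}(\tilde\tau_m)=\frac{32}{\tilde\tau_m^3}(\tilde\tau_m^2-1)P_m'(\tilde\tau_m^2)>0 .
\]
The roots $s\approx3.124$ and $s\approx2.072$ give the stated $\tilde\tau_2\approx1.767$ and $\tilde\tau_3\approx1.439$.

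This is the paper's argument in substance. It checks $P_m(2)$, $P_m'(2)$ and a lower bound on $P_m''$ over $[2,\infty)$, which are the same Taylor coefficients at $s=2$. Your version simply uses Descartes' rule where the paper studies the sign of $P_m'$ for $m=2,3$.
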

\begin{proof}
    It follows from \cref{proposition: spectrum of the irmao paralelo} that 
    \begin{align*}
        \tilde{\lambda}^+_{0,0}(\tau) &= 16 (\tau^4 - 3\tau^2 + 2), \\
        \tilde{\lambda}^+_{1,1}(\tau) &=  \frac{-16}{\tau^4}(\tau^2-1)(2\tau^4+1), \\
        \tilde{\lambda}^+_{m,m}(\tau) &=  \frac{16}{\tau^4}(\tau^2-1)P_m(\tau^2),     
    \end{align*}
    where
    \begin{equation*}
        P(u,m)=P_m(u)=(m^2-1)^2u^3-(3m^4-4m^2+2)u^2+3m^2(m^2-1)u -m^4.
    \end{equation*} 
    It is straightforward to verify $\tilde{\lambda}^+_{0,0}(\tau) < 0$ and $\tilde{\lambda}^+_{1,1}(\tau) < 0$, for all $\tau>\sqrt{2}$. Note that,
    \begin{align*}
        P_2(u) &= 9u^3 -38u^2 +36u -16, \\
        P_2(2) &= -24 < 0, \\
        P_2'(u) &= 27u^2 - 76u +36, \\
        P_3(u) &= 64u^3 - 218u^2 + 216u -81, \\
        P_3(2) &= -9 < 0, \\
        P_3'(u) &= 192u^2 - 436u + 216.
    \end{align*}
    Studying the sign of $P_m'$, $m \in \{2,3\}$, we see that $P_m$ has a unique root $\tilde{u}_m>2$ with $P_m'(\tilde{u}_m)>0$. Defining $\tilde{\tau}_m = \sqrt{\tilde{u}_m}$, by the chain rule, we obtain $\left.\frac{d}{d\tau}\right|_{\tau=\tilde{\tau}_m} \tilde{\lambda}^+_{m,m}(\tau) > 0$. We may find exact expressions for $\tilde{\tau}_m$ using the Cardano's formula. It only remains to verify that $\tilde{\lambda}^+_{m,m}(\tau)>0$, for all $\tau>\sqrt{2}$ and all $m\geq4$. It is enough to prove that $P(u,m)>0$ for all $u>2$, and all $m \geq 4$. Indeed,
    \begin{align*}
        P_m(2) &= m^2(m^2-10) > 0, \\
        P_m'(2) &= 3m^4 - 15m^2 +4 > 0, \\
    \end{align*}
    and
    \begin{align*}
         P_m''(u) &= 6(m^2-1)^2u-(6m^4-6m^2+4) \\ 
         &\geq 6m^4 - 18m^2+8 > 0.
    \end{align*}
    Hence, $P(u,m)>0$ under these constraints. 
    
\end{proof}
To better understand a tubular neighbourhood of $\widetilde{\Sigma}_\tau$ we observe the following. Let $\Sigma = \Sphere^1\big(\cos (\phi)\big) \times \Sphere^1\big(\sin(\phi)\big)$, $\phi \in \R$, and 
\begin{equation*}
    v \colon \T^2 \to \Sphere^3_1, \quad (z,w) \mapsto (\cos(\phi)z,\sin(\phi)w),
\end{equation*} be a parametrization of $\Sigma$. Let $f = \mu \in \R$ be a constant function on $\T^2$ and 
\begin{equation*}
    v_f \colon \T^2 \to \Sphere^3_1, \quad p \mapsto \exp^1_{v(p)}\big(f(p)n(p)\big), 
\end{equation*} 
be the parametrization of the graph of $f$ over $\Sigma$, where $n(z,w) = (-\sin(\phi)z,\cos(\phi)w)$ is a unit normal vector field along $v$. Then,
\begin{equation}\label{eq: graph of constant function over BB}
    \ima(v_f) = \Sphere^1\big(\cos(\phi+\mu)\big) \times \Sphere^1\big(\sin(\phi+\mu)\big).
\end{equation}
Denote $$
V(r,\tau)=\big\{q \in \Sphere^3 \mid d(q,\widetilde{\Sigma}_{\tau}) < r\big\},
$$
where $d$ is the Riemannian distance on $\Sphere^3_1$, and $$
B(r)= \Big\{ f \in C^{4,\alpha}_K(\T^2) \mid \sup\limits_{p \in \T^2} |f(p)| < r\Big\}.
$$ 
By compactness of $\T^2$ and \cref{eq: constantes do irmao bifurcado,eq: parametrização grafica do BB,eq: graph of constant function over BB}, for each $m \in \{2,3\}$, there exist $r_m > 0$, and a interval $I_m \subset(\sqrt{2},+\infty)$ containing $\tilde{\tau}_m$, such that $V(2r_m,\tilde{\tau}_m)$ is a tubular neighbourhood of $\widetilde{\Sigma}_{\tilde{\tau}_m}$ in $\Sphere^3_1$ and
\begin{enumerate} 
    \item[($P1$)] $\forall \tau \in I_m, \quad V(r_m,\tau)\subset V(2r_m,\tilde{\tau}_m)$ is a tubular neighbourhood of $\widetilde{\Sigma}_\tau$ in $\Sphere^3_1$. 
    \item[($P2$)] $\forall \tau \in I_m, \ \forall f\in B(r_m), \quad \ima(y^\tau_{f}) \subset V(r_m,\tau) \quad \text{and} \quad \swap (\widetilde{\Sigma}_{\tau}) \cap V(2r_m,\tilde{\tau}_m) = \varnothing$. \label{Propriedade da vizinhança tubular dos BBs}
\end{enumerate}

\medskip
Define
$$
\mathcal{\widetilde{P}} \colon I_m \times B(r_m) \to C^{0,\alpha}_K(\T^2), \quad (\tau,f) \mapsto \mathcal{E_W}(y^\tau_f).
$$
Thus, $y^\tau_f \colon \T^2 \to \berger$ is a Willmore surface if, and only if, $\mathcal{\widetilde{P}}(\tau,f) = 0$. Arguing as before and using $(P1)$, we see that $\mathcal{\widetilde{P}}$ is well--defined, and this map is smooth, since the family {$\{y^\tau\}_{\tau>\sqrt{2}}$} is smooth. From the fact that $\widetilde{\Sigma}_\tau$ is a Willmore surface of $\berger$, we have
\begin{equation*}
    \forall \tau \in I_m, \quad \mathcal{\widetilde{P}}(\tau,0) = 0.
\end{equation*}
It follows from \cref{proposition: second variation operator irmão paralelo,proposition: spectrum of the irmao paralelo,claim: sinais dos autovalores do irmao bifurcado}, that
$$
\widetilde{L}:=D_0(\mathcal{\widetilde{P}}(\tilde{\tau}_m, \cdot)) \colon C^{4,\alpha}_K(\T^2) \to C^{0,\alpha}_K(\T^2)
$$
is a Fredholm operator such that:
\begin{enumerate}[label=\textbullet]
    \item The Fredholm index of $\widetilde{L}$ is 0.
    \item $\operatorname{Ker}(\widetilde{L}) = \R C_{m,m}^+$ and $\frac{C^{0,\alpha}_K(\T^2)}{\operatorname{Im}(\widetilde{L})}$ is generated by the class of $C_{m,m}^+$.
    \item $\left.\frac{d}{d\tau}\right|_{\tau=\tilde{\tau}_m} D_0(\mathcal{\widetilde{P}}(\tau, \cdot))(C_{m,m}^+) \notin \ima(\widetilde{L})$.
\end{enumerate}
Thus, we may apply the Crandall--Rabinowitz \cref{Crandall-Rabinowitz} to bifurcate the path $\tau \mapsto (\tau,0)$, as roots of $\mathcal{\widetilde{P}}$, at $\tau=\tilde{\tau}_m$. More precisely, there exist $\delta_m>0$, an open neighbourhood
$$
\mathcal{U}_m \subset I_m \times C_K^{4,\alpha}(\T^2)
$$
of $(\tilde{\tau}_m,0)$, and a smooth path 
$$
\gamma_m \colon (-\delta_m,\delta_m) \to \mathcal{U}_m, \quad \gamma_m(t)=(\tau_m(t),f_m(t)),
$$ 
such that 
\begin{enumerate}
    \item[(i)] $\forall t \in (-\delta_m,\delta_m), \quad f_m(t) \in B(r_m)$.
    \item[(ii)] $\tau_m(0) = \tilde{\tau}_m$, $f_m(0) = 0 \in C^{4,\alpha}_K(\T^2)$, $f_m(t) \neq 0$ when $t \ne 0$, and $f_m'(0)=C_{m,m}^+$.
    \item[(iii)] $\{\mathcal{\widetilde{P}}(\tau,f) = 0\} \cap \mathcal{U}_m = \big(\{(\tau,0) : \tau \in I_m \} \cup \ima(\gamma_m)\big) \cap \mathcal{U}_m$.
\end{enumerate}
From $(ii)$ and $(iii)$, it follows that $\tilde{\tau}_m$ is a $K$--invariant bifurcation parameter for $\widetilde{\Sigma}_\tau$ as a Willmore surface such that properties $(a)$ of \cref{theorem: Bifurcations of the bifurcated brother} hold. By the \cref{claim: sinais dos autovalores do irmao bifurcado,lemma: Orthogonal basis of K-invariant functions}, if $\tau \ne \tilde{\tau}_m$ for each $m \in \{2,3\}$, then 
$$
D_0\big(\widetilde{\mathcal{P}}(\tau, \cdot )\big) \colon C^{4,\alpha}_K(\T^2) \to C^{0,\alpha}_K(\T^2)
$$
is a linear isomorphism. Therefore, by the implicit function theorem, the set all $K$--invariant bifurcation parameter of the Clifford Torus as a Willmore surface is exactly $\{\tilde{\tau}_2, \tilde{\tau}_3\}$.

Let us compute the variational vector field of $t \mapsto y^{\tau(t)}_{f_m(t)}$. Setting 
$$
\phi(\tau)=\frac{1}{2}\arccos\Big(\frac{\sqrt{\tau^2-2}}{\sqrt{\tau^2-1}}\Big),
$$ we have that $y^\tau(z,w) = \big(\cos\big(\phi(\tau)\big)z,\sin\big(\phi(\tau)\big) w\big)$. Thus,
$$
\forall \tau>0, \ \forall t \in \R, \ \forall p=(z,w) \in \T^2, \quad \exp^1_{y^\tau(p)}\big(t\widetilde{N}_\tau(p)\big) = \big(\cos(\phi(\tau)+t)z,\sin(\phi(\tau)+t)w\big),  
$$
from where we obtain
$$
\forall \tau,\tilde{\tau}>0, \ \forall p \in \T^2, \quad y^{\tau}(p) = \exp^1_{y^{\tilde{\tau}}(p)}\big((\phi(\tau)-\phi(\tilde{\tau}))\widetilde{N}_{\tilde{\tau}}(p)\big).
$$
Thus,
\begin{align*}
    \left.\frac{\partial}{\partial_t}\right|_{t=0} y^{\tau(t)}_{f_m(t)}(p) &= \left.\frac{\partial}{\partial_t}\right|_{t=0} \exp^1_{y^{\tau_m(t)}}\big(f_m(t)(p)\widetilde{N}_{\tau_m(t)}(p)\big) \\ 
    &= \left.\frac{\partial}{\partial_t}\right|_{t=0} \exp_{y^{\tau_m(0)}}^1\Big(\big(\phi(\tau_m(t))-\phi(\tau_m(0)) +f_m(t)(p)\big)\widetilde{N}_{\tau_m(0)}(p) \Big) \\
    &=\big(\phi'(\tilde{\tau}_m)\tau_m'(0) + C_{m,m}^+(p)\big)\widetilde{N}_{\tau_m}(p).
\end{align*}

\medskip
Now we verify property $(c)$: assume that there exist $m \in \{2,3\}$, $t \in (-\delta_m,\delta_m)$, an isometry $\psi \in \iso(\Sphere^3_{\tau_m(t)}) = U^+ \cup U^-$, and a Hopf torus $\hopftori \subset \Sphere^3_{\tau_m(t)} $ such that $\ima(y^{\tau_m(t)}_{f_m(t)}) = \psi(\hopftori)$. We will show that $t=0$. 

\medskip
Since the functions 
$$
s \mapsto W\big(\ima(y^{\tau_m(s)}_{f_m(s)})\big) \quad \text{and} \quad s \mapsto \willmore(\Sigma_{\tau_m(s)})
$$ 
are continuous, $\ima(y^{\tau_m(0)}_{f_m(0)}) = \widetilde \Sigma_{\tau_m}$, and $\Sigma_{\tau_m(0)} = \Sigma_{\tau_m}$, by \cref{remark: Willmore energy of Hopf tori family} we may assume that $\delta_m$ is sufficiently small to ensure that 
$$
\forall s \in (-\delta_m,\delta_m), \quad \willmore\big(\ima(y^{\tau(s)}_{f_m(s)})\big) < \willmore(\Sigma_{\tau_m(s)}).
$$ 
Since $f_m(t)$ is $K$--invariant and $\hopftori$ is a Hopf torus, we have that 
$$
H_1 = \{R_{\varphi,\varphi}\} \quad \text{and} \quad  H_2 = \psi^{-1}\{R_{\theta,-\theta}\}\psi
$$ 
are subgroups of $\iso_0(\hopftori)$. As in the Clifford case, we may use the Maximal Torus Theorem and \cref{claim: Rigidity of Symmetries of Hopf tori} to obtain an isometry $\zeta \in U^+$ such that $\zeta(\hopftori) = \widetilde{\Sigma}_{\tau(t)}$ and
$$
\{R_{\theta,-\theta}\} \subset \fix\Big((\psi \circ \zeta^{-1})\big(\widetilde{\Sigma}_{\tau_m(t)}\big)\Big).
$$
We may use \cref{lemma: Symetries of Clifford Torus} to perform the same computation done in the proof of \cref{claim: Rigidity of symmetries of clifford torus} and conclude that exist $\mu,\nu \in \R$ such that 
\begin{equation*}
    (\psi \circ \zeta^{-1}) \in  \{R_{\mu,\nu}, \ R_{\mu,\nu} \circ \swap, \ \conj \circ R_{\mu,\nu}, \ \conj \circ R_{\mu,\nu} \circ \swap \}.
\end{equation*}
Thus,
$$
(\psi \circ \zeta^{-1})\big(\widetilde{\Sigma}_{\tau_m(t)}\big) = \widetilde{\Sigma}_{\tau_m(t)} \quad \text{or} \quad (\psi \circ \zeta^{-1})\big(\widetilde{\Sigma}_{\tau_m(t)}\big)= \swap\big(\widetilde{\Sigma}_{\tau_m(t)}\big). 
$$
Since
$$
\ima\big(y^{\tau_m(t)}_{f_m(t)}\big)  = \psi(\hopftori) = (\psi \circ \zeta^{-1})\big(\widetilde{\Sigma}_{\tau(t)})\big),
$$
it follows from $(i)$, $(P1)$, and $(P2)$, that $\ima\big(y^{\tau_m(t)}_{f_m(t)}\big) = \widetilde{\Sigma}_{\tau(t)}$. 

\medskip
Since $\tilde{\tau}_m$ is a $K$--invariant bifurcation parameter of $\widetilde{\Sigma}_{\tau}$, we obtain that $f_m(t) = 0$ and $t = 0$ (see \cref{definition: invariant bifurcation}).

\end{proof}

As in the Clifford torus case, the argument based on the implicit function theorem, which shows that there are no other $K$--invariant bifurcation parameters for the bifurcated brother, yields the following stronger result.

\begin{corollary}[Local rigidity of the bifurcated brother as a Willmore surface]
Let $\tilde{\tau}$ be a nondegenerate $K$--invariant Berger parameter for the bifurcated brother as a Willmore surface, that is,
\begin{equation*}
    \tilde{\tau} \in (\sqrt{2},+\infty)\setminus\{\tilde{\tau}_1, \tilde{\tau}_2\}.
\end{equation*}
Then, there exist $\delta=\delta(\tilde{\tau})>0$ and an open neighborhood of the origin
\begin{equation*}
    U=U_{\tilde{\tau}}\subset C^{4,\alpha}_K(\T^2),
\end{equation*}
such that if $f\in U$ and $y^\tau_f:\T^2\to\berger$ is a Willmore surface for some $\tau\in(\tilde{\tau}-\delta,\tilde{\tau}+\delta)$, then $f=0$, and hence $y^\tau_f=y^\tau$ is the parametrization of the bifurcated brother.
\end{corollary}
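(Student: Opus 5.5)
This corollary comes straight from the implicit function theorem applied to the map $\widetilde{\mathcal{P}}$ from the proof of \cref{theorem: Bifurcations of the bifurcated brother}. (The excluded set should read $\{\tilde{\tau}_2,\tilde{\tau}_3\}$, matching that theorem.)

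\emph{Setting up $\widetilde{\mathcal{P}}$ near $\tilde{\tau}$.} Fix $\tilde{\tau}>\sqrt{2}$ not in the excluded set. Exactly as in the construction of $(P1)$ and $(P2)$, choose $r>0$ and an open interval $I\subset(\sqrt{2},\infty)$ containing $\tilde{\tau}$ such that $y^\tau_f$ is an embedding for all $\tau\in I$ and $f\in B(r)$. Define
\[
\widetilde{\mathcal{P}}\colon I\times B(r)\to C^{0,\alpha}_K(\T^2),\qquad (\tau,f)\mapsto \mathcal{E}_\willmore(y^\tau_f).
\]
This map is well defined by the same argument as \cref{claim: K-invariance of Euler-Lagrage equation}. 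It is smooth by the argument of \cref{claim:smoothness_of_P}, using that $\tau\mapsto y^\tau$ is a smooth family. It also satisfies $\widetilde{\mathcal{P}}(\tau,0)=0$ for all $\tau\in I$, because $\widetilde{\Sigma}_\tau$ is Willmore.

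\emph{Invertibility of the linearization.} Set $\widetilde{L}_{\tilde{\tau}}=D_0\widetilde{\mathcal{P}}(\tilde{\tau},\cdot)=\mathcal{L}_{\widetilde{\Sigma}_{\tilde{\tau}}}$, viewed in the $y$-coordinates. By \cref{proposition: spectrum of the irmao paralelo} and \cref{lemma: Orthogonal basis of K-invariant functions}, the $H^4$-closure of $C^{4,\alpha}_K(\T^2)$ has orthogonal basis $\{C^+_{n,n}\}_{n\in\N_0}$. Each basis element is an eigenfunction, with eigenvalue $-\tilde{\lambda}^+_{n,n}(\tilde{\tau})$.

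By \cref{claim: sinais dos autovalores do irmao bifurcado}, none of these eigenvalues vanishes. For $n\ge4$ this uses the lower bound $P_n(u)>0$. That bound grows like $n^4u^3$, so the eigenvalues are bounded away from zero uniformly in $n$. Hence the extension $\overline{L}$ to the Sobolev closures is a bijection between the $H^4$- and $L^2$-closures.

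Next I pass to Hölder spaces, following the regularity step of \cref{claim:Fredholmness_of_L}. For $v\in C^{0,\alpha}_K$, the preimage $u=\overline{L}^{-1}v$ lies in $C^{4,\alpha}_K$ by elliptic regularity for this fourth-order operator. Injectivity on $C^{4,\alpha}_K$ is inherited from $\overline{L}$. So $\widetilde{L}_{\tilde{\tau}}$ is a continuous linear bijection between Banach spaces, and by the open mapping theorem it is an isomorphism.

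\emph{Applying the implicit function theorem.} It gives $\delta>0$, a neighbourhood $U\subset B(r)$ of $0$, and a smooth map $g\colon(\tilde{\tau}-\delta,\tilde{\tau}+\delta)\to U$ such that the zeros of $\widetilde{\mathcal{P}}$ in $(\tilde{\tau}-\delta,\tilde{\tau}+\delta)\times U$ are exactly the points $(\tau,g(\tau))$. Since $(\tau,0)$ is already a zero, uniqueness forces $g\equiv0$. Therefore, if $f\in U$ and $y^\tau_f$ is Willmore, then $\widetilde{\mathcal{P}}(\tau,f)=0$, so $f=0$ and $y^\tau_f=y^\tau$.

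The only step needing real care is the invertibility of $\widetilde{L}_{\tilde{\tau}}$. Specifically, one must check that the eigenvalues do not accumulate at zero, so that the inverse is bounded. The large-$n$ behaviour of $P_n$ handles this.
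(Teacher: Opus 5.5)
Your proposal is correct and follows the paper's route. The paper likewise obtains invertibility of $D_0\widetilde{\mathcal P}(\tilde\tau,\cdot)$ on $C^{4,\alpha}_K(\T^2)$ from the sign analysis of $\tilde\lambda^+_{m,m}$ together with the orthogonal basis $\{C^+_{m,m}\}$ and elliptic regularity, then concludes $f=0$ by the implicit function theorem, and you are right that the excluded set should read $\{\tilde\tau_2,\tilde\tau_3\}$. One small precision: at fixed $u=\tilde\tau^2$ one has $P_n(u)\sim n^4(u-1)^3$, and it is this $n^4$ growth, not mere separation from zero, that makes $\overline L$ map the $H^4$-closure onto the $L^2$-closure.
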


 \section{Final remarks}\label{section: Final Remarks}

We compare the energies of $\Sigma_\tau$ and $\Sphere^2_\tau$ (see \cref{fig:Clifford vs Equador}). Using the coordinates given in \cref{ex: equador}, one can verify that 
\begin{equation*}
 |A_{\Sphere^2_\tau}|^2(a,\theta) = 2\tau^2(1-\tau^2)^2 \frac{(1-a^2)^2}{(a^2+(1-a^2)\tau^2)^2} \quad \text{and} \quad (\det g_{\Sphere^2_\tau})(a,\theta) = a^2 + (1-a^2)\tau^2.   
\end{equation*}
Using the software Mathematica to compute the integral of $|\traceless{A}_{\Sphere^2_\tau}|^2=|A_{\Sphere^2_\tau}|^2$, we obtain 
\begin{equation*}
    \willmore(\Sphere^2_\tau) = \frac{4 \pi}{\tau^2-1}\big(-2+\tau^2+\tau^4+\tau^2(\tau^2-4)h(\tau)\big),
\end{equation*}
where
\begin{equation*}
h(\tau)=
  \begin{cases}
      -\sqrt{1-\tau^2}\operatorname{ArcCsch\left(\frac{\tau}{\sqrt{1-\tau^2}}\right)}, \quad &\text{if} \quad 0<\tau<1, \\
      \sqrt{\tau^2-1}\operatorname{ArcCsc}\left(\frac{\tau}{\sqrt{\tau^2-1}}\right), \quad &\text{if} \quad \tau>1.
  \end{cases}
\end{equation*}
The Willmore energy of the Clifford torus was computed in \cref{remark: Willmore energy of Hopf tori family}.
\begin{figure}[ht]
    \centering
    \begin{subfigure}{0.48\textwidth}
        \centering
        \includegraphics[width=\linewidth]{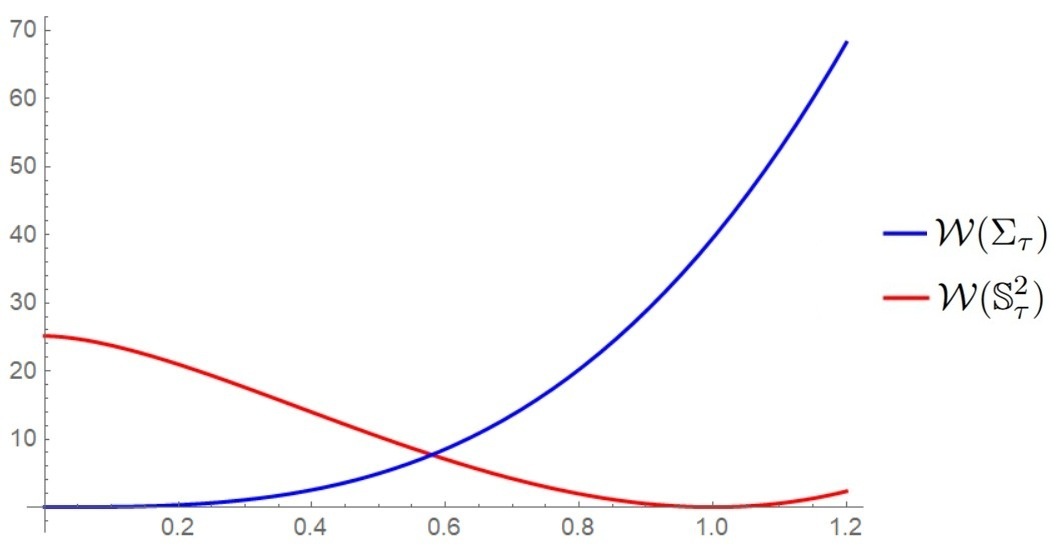}
        
        \label{fig:Clifford vs Equador 1}
    \end{subfigure}
    \hfill
    \begin{subfigure}{0.48\textwidth}
        \centering
        \includegraphics[width=\linewidth]{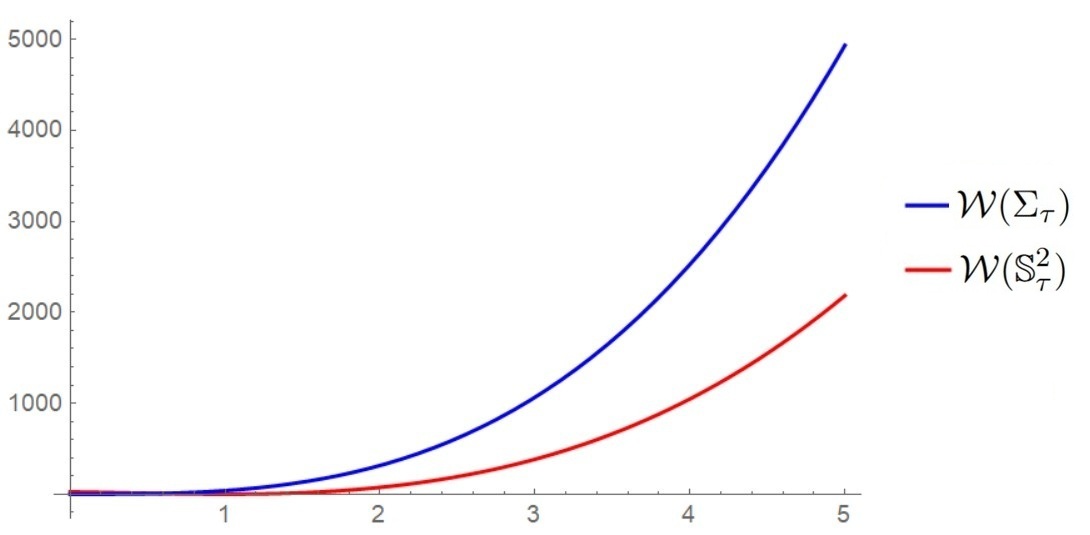}
        
        \label{fig:Clifford vs Equador 2}
    \end{subfigure}
    
    \caption{Willmore energy -- Clifford torus vs. equator. Observe the different scales.}
    \label{fig:Clifford vs Equador}
\end{figure}

\medskip
Concerning the discussion around CMC Willmore surfaces in Berger Spheres (see \cref{ex: equador,Prop: Non-existence of CMC Willmore surfaces}), one may ask whether there exists a non-minimal CMC surface among the bifurcating surfaces $x_{f_m(t)}$ in the range $\tau \in \left(1,\sqrt{2}\right]$. We claim that these surfaces have points with negative and positive mean curvature when they are not one of the two explicit examples (see \cref{remark: trivial bifurcation at 1,remark: trivial bifurcation at sqrt 2}) and $t$ is sufficiently small. Indeed, denoting by $H(f,\tau)$ the mean curvature of $x_f \colon \T^2 \to \berger$, since $\Sigma_\tau$ is minimal for every $\tau>0$ and satisfies
\begin{align*}
    |A_\tau|^2 &= 2\tau^2, \\
    \ric_\tau(N,N) &= 2(2-\tau^2), \\
    \Delta_\tau &= \Delta_1 + (1-\tau^2)L_\xi^2,
\end{align*}
we have
\begin{equation*}
     H\big(f_m(0),\tau_m(0)\big)=0,
\end{equation*}
and
\begin{align*}
    \left.\frac{d}{dt}\right|_{t=0} H\big(f_m(t),\tau_m(t)\big) &= D_{f_m(0)}(H(\cdot,\tau_m))(f_m'(0)) \\ 
    &= (\Delta_{\tau_m} + 4)C_{m,m}^+ \\
    &=(\Delta_1 + (1-\tau_m^2)L_\xi^2 + 4)C_{m,m}^+ \\
    &= \frac{4}{\tau_m^2}(\tau_m^2-m^2)C_{m,m}^+.
\end{align*}
By \cref{proposition: spectrum of the Clifford torus}, we have $\tau_m < m$ for $m \geq 2$. Since there exist points $p,q \in \T^2$ such that $C_{m,m}^+(p) > 0$ and $C_{m,m}^+(q) < 0$, the claim follows.

\medskip
The same argument does not apply to the bifurcating surfaces $y^{\tau_m(t)}_{f_m(t)}$, because in this case the time derivative of the mean curvature depends on $\tau_m'(0)$, for which we have no information.

\medskip
We provide additional information to \cref{theorem: Stability of the Clifford torus}. It follows from the stability result about elasticas in the round $\Sphere^2$ \cite[Th. 3.1.]{Langer&Singer1987} that, when we restrict the Willmore functional to the Hopf tori family, the Clifford torus $\Sigma_\tau$ is stable in the range $\tau \in (0,\sqrt{2}]$, and its index is at least one when $\tau > \sqrt{2}$. The Hopf tori family is the $H$--invariant tori family, where $H = \{R_{\theta,\theta} \mid \theta \in \R \}$. By \cref{proposition: spectrum of the Clifford torus}, the eigenvalues corresponding to $H$--invariant eigenfunctions of $\mathcal{L}_{\Sigma_\tau}$ are of the form
\begin{equation*}
\lambda_{m,m}^-(\tau) = 16(m^2(m^2-2)+1) + 8(m^2-1)\tau^2,
\end{equation*}
for $m \geq 0$. Thus, within this family, the index of $\Sigma_\tau$ is exactly one in the range $\tau > \sqrt{2}$. 

\medskip
Finally, we also expect the existence of $G$--invariant bifurcations of the Clifford torus and the bifurcated brother associated with different symmetry subgroups $G$. For example, when $G = \operatorname{span}\{R_{0,\theta}, \conj \mid \theta \in \mathbb{R}\}$, the graphs of the eigenvalues $\lambda_{m,0}^+$ and $\tilde{\lambda}_{m,0}^+$, corresponding to $G$--invariant eigenfunctions of $\mathcal{L}_{\Sigma_\tau}$ and $\mathcal{L}_{\widetilde{\Sigma}_\tau}$, indicate that some of these eigenvalues change sign. Hence, the same approach should yield the existence of $G$--invariant bifurcation parameters of $\Sigma_\tau$ and $\widetilde{\Sigma}_\tau$ as Willmore surfaces.

 \appendix
   \section{Auxiliary results}\label{app:aux}

Recall the classical Crandall--Rabinowitz bifurcation theorem \cite[Th. 1.7.]{CrandallRabinowitz1971},
also known in the literature as \emph{bifurcation from a simple eigenvalue}.

\begin{theorem}[Crandall-Rabinowitz]\label{Crandall-Rabinowitz}
Let $X$ and $Y$ be real Banach spaces, $U \subset X$ open, $I \subset \R$ an interval, $x_0 \in U$ and $\mathcal{P} \colon I \times U \to Y$ a map of class $C^l$ with $l \in \{2,...,\infty\}$ such that 
$$
\forall \mu \in I, \quad \mathcal{P}(\mu,x_0) = 0.
$$ 
Suppose that there exists $\mu_0 \in I$ with the following properties:
\begin{enumerate}
    \item[\textnormal{(a)}] $L:=D_{x_0}(\mathcal{P}(\mu_0,.))$ is a Fredholm operator with Fredholm index $0$.
    \item[\textnormal{(b)}] There exist $\xi_0 \in X \backslash \{0\}$ with $\operatorname{Ker}(L) = \R \xi_0$.
    \item[\textnormal{(c)}] $\left.\frac{d}{d\mu} \right|_{\mu=\mu_0} D_{x_0} (\mathcal{P}(\mu, .))(\xi_0) \notin L(X)$. 
\end{enumerate}
Then, there exist a neighbourhood $V \subset I \times X$ of $(\mu_0,x_0)$ and a path of class $C^{l-1}$
$$
\gamma \colon (-\delta,\delta) \to V, \quad \gamma(t)=(\mu(t),x(t)),
$$
such that:
\begin{itemize}
    \item[\textnormal{(1)}] $\mu(0)=\mu_0$, $x(0)=x_0$, $x(t) \neq x_0$, when $t \neq 0$, and $x'(0) = \xi_0$,
    \item[\textnormal{(2)}] $\{\mathcal{P}(\mu,x) = 0\} \cap V = \big(\{(\mu,x_0) \}_{\mu \in I} \cup \ima(\gamma)\big) \cap V$.
\end{itemize}

\end{theorem}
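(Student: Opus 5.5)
This is the Crandall--Rabinowitz theorem, a classical result; the plan follows the standard Lyapunov--Schmidt reduction. We may assume $x_0=0$, $\mu_0=0$.

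\emph{Step 1: splittings.} Since $L$ is Fredholm of index $0$ with $\operatorname{Ker}L=\R\xi_0$, the range $L(X)$ is closed of codimension one. Choose a closed complement $Z$ with $X=\R\xi_0\oplus Z$ (for instance $Z=\ker\ell$ with $\ell\in X^*$, $\ell(\xi_0)=1$, by Hahn--Banach). By (c), $\eta_0:=\partial_\mu D_x\mathcal{P}(0,0)\xi_0\notin L(X)$, so $Y=\R\eta_0\oplus L(X)$; let $Q$ be the projection onto $L(X)$ along $\eta_0$.

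\emph{Step 2: factor out the trivial branch.} Write $x=t(\xi_0+z)$ with $z\in Z$. Since $\mathcal{P}(\mu,0)=0$, Taylor's formula with integral remainder gives
\[
\mathcal{P}(\mu,t(\xi_0+z))=t\,G(\mu,t,z),\qquad G(\mu,t,z)=\int_0^1 D_x\mathcal{P}\big(\mu,st(\xi_0+z)\big)(\xi_0+z)\,ds .
\]
Here $G$ is of class $C^{l-1}$ near $(0,0,0)$ (this is where $l\ge2$ is used), and $G(0,0,0)=L\xi_0=0$.

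\emph{Step 3: implicit function theorem.} Consider $G\colon\R\times\R\times Z\to Y$ and its partial derivative in $(\mu,z)$ at the origin, which is $(\dot\mu,\dot z)\mapsto \dot\mu\,\eta_0+L\dot z$. This map is injective: $L|_Z$ is injective and $\eta_0\notin L(X)$. It is also surjective, because $Y=\R\eta_0\oplus L(X)$ and $L(Z)=L(X)$. Hence it is an isomorphism, and the implicit function theorem gives $C^{l-1}$ functions $\mu(t)$, $z(t)$ with $\mu(0)=0$, $z(0)=0$, which are the unique solutions of $G=0$ in a neighbourhood of the origin. Set $\gamma(t)=(\mu(t),t(\xi_0+z(t)))$. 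Then $x(t)\neq0$ for $t\neq0$, because $\ell(x(t))=t$. Moreover $x'(0)=\xi_0+z(0)=\xi_0$, which gives (1).

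\emph{Step 4: local description of the zero set, (2).} This is the main point needing care. Let $(\mu,x)$ be a zero near $(0,0)$ and decompose $x=t\xi_0+w$ with $w\in Z$. If $t=0$, we must show $w=0$. Apply $Q$ to $\mathcal{P}(\mu,w)=0$ and use Taylor's formula: $\mathcal{P}(\mu,w)=Lw+o(|\mu|\,\|w\|+\|w\|)$, with $L|_Z\colon Z\to L(X)$ an isomorphism. This forces $\|w\|\le o(\|w\|)$, hence $w=0$.

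If $t\neq0$, we must show that $z=w/t$ is small, so that uniqueness in Step 3 applies. Apply the same reduction to the $Z$-component. First solve $Q\mathcal{P}(\mu,t\xi_0+w)=0$ for $w=W(\mu,t)$ by the implicit function theorem, using that $QL|_Z=L|_Z$ is invertible. Since $W(\mu,0)=0$ by uniqueness, $W$ is $C^{1}$ and $\|W(\mu,t)\|\le C|t|$, and moreover $W(\mu,t)=o(|t|)$ as $(\mu,t)\to0$. Thus $\|z\|$ is small, and the zero coincides with $\gamma(t)$. Choosing $V$ as the neighbourhood from these two implicit-function applications yields (2).

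I expect Step 4, namely making the neighbourhoods compatible so that every nontrivial zero falls inside the IFT chart of $G$, to be the main obstacle. It is handled by the auxiliary reduction $w=W(\mu,t)$ together with the estimate $W=O(t)$.
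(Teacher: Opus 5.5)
Your proof is correct and is the standard one. The paper does not prove this statement; it quotes it from \cite[Th.~1.7]{CrandallRabinowitz1971}. Your Steps 2--3 are the classical argument: blow up the trivial branch via $x=t(\xi_0+z)$, then apply the implicit function theorem to $G$ in $(\mu,z)$. Condition (c) is exactly what makes $(\dot\mu,\dot z)\mapsto\dot\mu\,\eta_0+L\dot z$ an isomorphism.

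Two comments on Step 4, which you rightly flag as the delicate point.

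First, everything there rests on the claim $W(\mu,t)=o(|t|)$, and you assert it without proof. The bound $\|W(\mu,t)\|\le C|t|$ alone only gives $\|z\|\le C$, which does not place $z$ in the uniqueness neighbourhood of Step 3. The missing line is short. Differentiate $Q\mathcal{P}(\mu,t\xi_0+W(\mu,t))=0$ in $t$ at the origin; using $W(0,0)=0$, $L\xi_0=0$ and that $Q$ is the identity on $L(X)$, you get $L\,\partial_tW(0,0)=0$. Since $\partial_tW(0,0)\in Z$ and $\operatorname{Ker}(L)\cap Z=\{0\}$, it vanishes. Then $W(\mu,t)=t\int_0^1\partial_tW(\mu,st)\,ds=o(|t|)$ by continuity of $\partial_tW$.

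Second, the auxiliary reduction and the projection $Q$ are unnecessary. The Taylor estimate from your case $t=0$ applies verbatim to any zero $x=t\xi_0+w$. Take $c>0$ with $\|Lw\|\ge c\|w\|$ on $Z$, and let $\epsilon(\mu,x)=\sup_{s\in[0,1]}\|D_x\mathcal{P}(\mu,sx)-L\|$, which tends to $0$ as $(\mu,x)\to(\mu_0,x_0)$. Then
\[
c\|w\|\le\|Lx\|=\|\mathcal{P}(\mu,x)-Lx\|\le\epsilon(\mu,x)\big(|t|\,\|\xi_0\|+\|w\|\big),
\]
so $\|w\|\le 2c^{-1}\|\xi_0\|\,\epsilon(\mu,x)\,|t|$ once $\epsilon<c/2$. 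This gives $w=0$ when $t=0$, and $\|z\|=\|w\|/|t|$ arbitrarily small when $t\neq0$, in a single estimate. That is the usual way to finish the local description of the zero set.
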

   \section{Proofs of auxiliary lemmas}\label{app: lemma's proofs}

\begin{proof}[Proof of \cref{lemma: Symetries of Clifford Torus}]
We may regard $\Sigma_\tau = \Sigma$ in the case $r_1 = r_2$. Let $(z,w) \in \Sigma$ and $\psi \in  \fix(\Sigma) \subset U^+ \cup U^-$.
Assume first that 
\begin{equation}\label{eq: relations of entries in unitary matrix}
    \psi = 
    \begin{pmatrix}
        \alpha & \beta \\
        \gamma & \delta
    \end{pmatrix} \in U^+, \quad
     \begin{cases}
        |\alpha|^2 + |\beta|^2 = |\gamma|^2 + |\delta|^2 = 1, \\
        \alpha\overline{\gamma} + \beta \overline{\delta} = 0.
    \end{cases}
\end{equation}
Since $\psi(z,w) \in \Sigma = \Sphere^1(r_1) \times \Sphere^1(r_2)$, we obtain
\begin{align}
  \begin{cases}\label{eq: image of (z w)}
    r_1^2 &= |\alpha z + \beta w|^2 = |\alpha|^2r_1^2 + |\beta|^2r_2^2 + 2\operatorname{Re}(\alpha\overline{\beta}z\overline{w}), \\
    r_2^2 &= |\gamma z + \delta w|^2 = |\gamma|^2 r_1^2 + |\delta|^2 r_2^2 + 2\operatorname{Re}(\gamma\overline{\delta}z\overline{w}).
  \end{cases}
\end{align}
Subtracting $|\gamma z + \delta w|^2$ from $|\alpha z + \beta w|^2$ and using \cref{eq: image of (z w),eq: relations of entries in unitary matrix}, we obtain
\begin{align}\label{eq: equacao das simetrias dos toros paralelos}
     (r_1^2-r_2^2)(1 + |\beta|^2 - |\delta|^2) &= 2\operatorname{Re}\big((\alpha\overline{\beta} - \gamma \overline{\delta})z\overline{w} \big).
\end{align}
\emph{Case: $r_1 \neq r_2$}. Assume that $\alpha\overline{\beta} - \gamma \overline{\delta} \neq 0$. Choose $w = r_2 \in \Sphere^1(r_2)$ and $z = ir_1\frac{\overline{\alpha} \beta - \overline{\gamma} \delta}{|\overline{\alpha}\beta - \overline{\gamma} \delta|} \in \Sphere^1(r_1)$. From \cref{eq: equacao das simetrias dos toros paralelos}, since $r_1^2 - r_2^2 \neq 0$ and $\operatorname{Re}\big((\alpha\overline{\beta} - \gamma \overline{\delta})z\overline{w} \big) = 0$, we get $|\delta|^2 - |\beta|^2 = 1$. It follows from \cref{eq: relations of entries in unitary matrix} that $\beta = 0$ and $\gamma = 0$, which contradicts our assumption. Thus, $\alpha\overline{\beta} - \gamma \overline{\delta} = 0$.

\medskip
\noindent
\emph{Case: $r_1 = r_2$.} The \cref{eq: image of (z w)} becomes
$\operatorname{Re}\big((\alpha\overline{\beta} - \gamma \overline{\delta})z\overline{w} \big) = 0$.
Choosing $w = r_2 \in \Sphere^1(r_2)$ and letting $z$ range over $\Sphere^1(r_1)$, we see that $\alpha\overline{\beta} - \gamma \overline{\delta} = 0$.

\medskip
\noindent
Either case, we obtain $\alpha\overline{\beta} - \gamma \overline{\delta} = 0$. It follows from \cref{eq: relations of entries in unitary matrix} that if $\delta= 0$, then $\alpha = 0$ and $\psi = R_{\mu,\nu} \circ \swap$ for some $\mu,\nu \in \R$. This implies that $\swap \in \fix(\Sigma)$, which can occur only in the case $r_1=r_2$. If $\delta \neq 0$, by \cref{eq: relations of entries in unitary matrix} we see that $\beta=0$ and $\gamma = 0$. Thus, $\psi = R_{\mu,\nu}$ for some $\mu,\nu \in \R$.

\medskip
\noindent
Now assume that $\psi = \tilde{\psi} \circ \conj \in U^-$ with $\tilde{\psi} \in U^+$. Since $\conj \in \fix(\Sigma)$, we also have $\tilde{\psi} \in \fix(\Sigma)$. By what we have already proved, if $r_1 \neq r_2$, there exist $\mu,\nu \in \R$ such that $\psi = R_{\mu,\nu} \circ \conj$. If $r_1 = r_2$, there exist $\mu,\nu \in \R$ such that $\psi = R_{\mu,\nu} \circ \conj$ or $\psi = R_{\mu,\nu} \circ \swap \circ \conj$. The remaining assertions follow routinely.

\end{proof}

\begin{proof}[Proof of \cref{lemma: Orthogonal basis of K-invariant functions}]

    By \cref{eq: K-invariant functions on T2} a function $f \colon \T^2 \to \R$ is $K$--invariant exactly when 
    \begin{equation*}
    \forall \theta,\varphi,\eta \in \R \colon \quad f(e^{i(\theta+\eta)},e^{i(\varphi-\eta)}) = f(e^{i\theta},e^{i\varphi}) = f(e^{-i\theta},e^{-i\varphi}).
    \end{equation*}
    Thus, $C_{m,m}^+$ is $K$--invariant. We recall that there is an orthogonal Hilbert basis 
    \begin{equation*}
    \mathcal{B} \subset \{C_{m,n}^\pm,S_{m,n}^\pm\}_{m,n \in \N_0}
    \end{equation*}
    of both
    \begin{equation*}
    \big(L^2(\T^2),\ip{\cdot}{\cdot}_{L^2}\big) \quad \text{and} \quad \big(H^4(\T^2),\ip{\cdot}{\cdot}_{H^4}\big). 
    \end{equation*}
    By density, it is sufficient to show that $S_{m,n}^\pm$, $C_{m,n}^{-}$, and $C_{p,q}^+$ are $L^2$--orthogonal and $H^4$--orthogonal to a function $u \in C_K^{0,\alpha}(\T^2)$ and a function $v \in C_K^{4,\alpha}(\T^2)$, respectively, for all $(m,n),(p,q) \in (\N_0)^2 \setminus\{(0,0)\}$ with $p \ne q$. The $L^2$--orthogonality follows from the trigonometric properties of $\sin$ and $\cos$, the $K$--invariance of $u$, and the Change of Variables Theorem. For example, consider the coordinate system $(\theta,\varphi) \in (0,2\pi)^2 \mapsto (e^{i\theta},e^{i\varphi}) \in \T^2$. Then,

    \begin{align*}
        \ip{S_{m,n}^-}{u}_{L^2} &= \int_{\T^2} S_{m,n}^- u \, d\T^2 \\
                                &= \int_0^{2\pi} \int_0^{2\pi} S_{m,n}^-(\theta,\varphi)u(\theta,\varphi) d\theta d\varphi \\
                                &= \frac{1}{2} \int_0^{2\pi} \int_{-\pi}^\pi S_{m,n}^-(a+b,a-b) \, u(a+b,a-b)\, db\, da \\
                                &= \frac{1}{2} \int_0^{2\pi} u(a,a) \int_{-\pi}^\pi S_{m,n}^-(a+b,a-b) \, db\, da.
    \end{align*}

    On the other hand,

    \begin{align*}
        \int_{-\pi}^\pi S_{m,n}^-(a+b,a-b) \, db = \frac{1}{m+n} \int_{-(m+n)\pi}^{(m+n)\pi} \sin(t) \, dt = 0.
    \end{align*}

    Thus, $\ip{S_{m,n}^-}{u}_{L^2} = 0$.

    \medskip
    Now we verify the $H^4$--orthogonality. Notice first that 
    $$
    \Delta_{can}v \in C^{2,\alpha}_K(\T^2) \quad \text{and} \quad \Delta_{can}^2v \in C^{0,\alpha}_K(\T^2).
    $$ Indeed, since $\varphi_k$ is an isometry, for every $k \in K$, it holds that
    \begin{align*}
        \Delta_{can}v = \Delta_{can}(v \circ \varphi_k) = (\Delta_{can}v) \circ \varphi_k.
    \end{align*}
    Similarly, $\Delta^2_{can}v$ is $K$--invariant. The $H^4$--orthogonality follows from the $L^2$-orthogonality verified above and from the fact that $S_{m,n}^\pm$, $C_{m,n}^{-}$, and $C_{p,q}^+$ are eigenfunctions of $\Delta_{can}$. For example, 
    \begin{align*}
        \ip{S_{m,n}^+}{v}_{H^4} &= \int_{\T^2} (I-\Delta_{can})^2S_{m,n}^+ \, (I-\Delta_{can})^2v \, d\T^2 \\
                        &= \int_{\T^2}\big(S_{m,n}^+ + 2(m^2+n^2)S_{m,n}^+ + (m^2+n^2)^2 S_{m,n}^+\big)\big(v - 2 \Delta_{can}v + \Delta_{can}^2 v \big) \, d\T^2  \\
                        &= (1+m^2+n^2)^2 \int_{\T^2} S_{m,n}^+ \big(v - 2 \Delta_{can}v + \Delta_{can}^2v \big) d\T^2  \\
                        &= 0.
    \end{align*}    
\end{proof}
 \bibliographystyle{alpha}  
 \bibliography{references}

@article{Fabio&Alma2022,
  author    = {Albujer, Alma L. and dos Santos, F{\'a}bio R.},
  title     = {Willmore surfaces and Hopf tori in homogeneous 3-manifolds},
  journal   = {Annals of Global Analysis and Geometry},
  year      = {2022},
  volume    = {62},
  number    = {3},
  pages     = {403--420},
  doi       = {10.1007/s10455-022-09844-2},
  url       = {https://doi.org/10.1007/s10455-022-09844-2},
}

@article {Marques&Neves,
    AUTHOR = {Marques, Fernando C. and Neves, Andr\'e},
     TITLE = {Min-max theory and the {W}illmore conjecture},
   JOURNAL = {Ann. of Math. (2)},
  FJOURNAL = {Annals of Mathematics. Second Series},
    VOLUME = {179},
      YEAR = {2014},
    NUMBER = {2},
     PAGES = {683--782},
      ISSN = {0003-486X,1939-8980},
   MRCLASS = {53C42 (49Q20)},
  MRNUMBER = {3152944},
MRREVIEWER = {Andrea\ Mondino},
       DOI = {10.4007/annals.2014.179.2.6},
       URL = {https://doi.org/10.4007/annals.2014.179.2.6},
}

@article {Willmore1965,
    AUTHOR = {Willmore, T. J.},
     TITLE = {Note on embedded surfaces},
   JOURNAL = {An. Sti. Univ. ``Al. I. Cuza'' Ia si},
    VOLUME = {11B},
      YEAR = {1965},
     PAGES = {493--496},
      ISSN = {0041-9109},
   MRCLASS = {53.01 (53.74)},
  MRNUMBER = {202066},
MRREVIEWER = {H.\ B.\ Griffiths},
}

@article {CrandallRabinowitz1971,
    AUTHOR = {Crandall, Michael G. and Rabinowitz, Paul H.},
     TITLE = {Bifurcation from simple eigenvalues},
   JOURNAL = {J. Functional Analysis},
  FJOURNAL = {Journal of Functional Analysis},
    VOLUME = {8},
      YEAR = {1971},
     PAGES = {321--340},
      ISSN = {0022-1236},
   MRCLASS = {47.80 (35.00)},
  MRNUMBER = {288640},
MRREVIEWER = {B.\ V.\ Loginov},
       DOI = {10.1016/0022-1236(71)90015-2},
       URL = {https://doi.org/10.1016/0022-1236(71)90015-2},
}

@article {Manuel2014,
    AUTHOR = {Barros, Manuel and Ferr\'andez, Angel and Garay, \'Oscar J.},
     TITLE = {Equivariant {W}illmore surfaces in conformal homogeneous three
              spaces},
   JOURNAL = {J. Math. Anal. Appl.},
  FJOURNAL = {Journal of Mathematical Analysis and Applications},
    VOLUME = {409},
      YEAR = {2014},
    NUMBER = {1},
     PAGES = {459--477},
      ISSN = {0022-247X,1096-0813},
   MRCLASS = {53C42 (53C30 58E30)},
  MRNUMBER = {3095054},
MRREVIEWER = {Mohamad\ Chaichi},
       DOI = {10.1016/j.jmaa.2013.07.031},
       URL = {https://doi.org/10.1016/j.jmaa.2013.07.031},
}

@article {Manuel1997,
    AUTHOR = {Barros, Manuel},
     TITLE = {Willmore tori in non-standard {$3$}-spheres},
   JOURNAL = {Math. Proc. Cambridge Philos. Soc.},
  FJOURNAL = {Mathematical Proceedings of the Cambridge Philosophical
              Society},
    VOLUME = {121},
      YEAR = {1997},
    NUMBER = {2},
     PAGES = {321--324},
      ISSN = {0305-0041,1469-8064},
   MRCLASS = {53C42},
  MRNUMBER = {1426526},
MRREVIEWER = {Udo\ Hertrich-Jeromin},
       DOI = {10.1017/S0305004196001466},
       URL = {https://doi.org/10.1017/S0305004196001466},
}

@article {LangerSinger1984,
    AUTHOR = {Langer, Joel and Singer, David A.},
     TITLE = {The total squared curvature of closed curves},
   JOURNAL = {J. Differential Geom.},
  FJOURNAL = {Journal of Differential Geometry},
    VOLUME = {20},
      YEAR = {1984},
    NUMBER = {1},
     PAGES = {1--22},
      ISSN = {0022-040X,1945-743X},
   MRCLASS = {58E10 (53C22)},
  MRNUMBER = {772124},
MRREVIEWER = {Gudlaugur\ Thorbergsson},
       URL = {http://projecteuclid.org/euclid.jdg/1214438990},
}

@book {BesseEinsteinManifolds,
    AUTHOR = {Besse, Arthur L.},
     TITLE = {Einstein manifolds},
    SERIES = {Ergebnisse der Mathematik und ihrer Grenzgebiete (3) [Results
              in Mathematics and Related Areas (3)]},
    VOLUME = {10},
 PUBLISHER = {Springer-Verlag, Berlin},
      YEAR = {1987},
     PAGES = {xii+510},
      ISBN = {3-540-15279-2},
   MRCLASS = {53C25 (53-02 53C21 53C30 53C55 58D17 58E11)},
  MRNUMBER = {867684},
MRREVIEWER = {S.\ M.\ Salamon},
       DOI = {10.1007/978-3-540-74311-8},
       URL = {https://doi.org/10.1007/978-3-540-74311-8},
}

@book {MorreyMultipleIntegrals,
    AUTHOR = {Morrey, Jr., Charles B.},
     TITLE = {Multiple integrals in the calculus of variations},
    SERIES = {Die Grundlehren der mathematischen Wissenschaften},
    VOLUME = {Band 130},
 PUBLISHER = {Springer-Verlag New York, Inc., New York},
      YEAR = {1966},
     PAGES = {ix+506},
   MRCLASS = {49.00 (00.00)},
  MRNUMBER = {202511},
MRREVIEWER = {M.\ Schechter},
}

@article {Langer&Singer1987,
    AUTHOR = {Langer, Joel and Singer, David A.},
     TITLE = {Curve-straightening in {R}iemannian manifolds},
   JOURNAL = {Ann. Global Anal. Geom.},
  FJOURNAL = {Annals of Global Analysis and Geometry},
    VOLUME = {5},
      YEAR = {1987},
    NUMBER = {2},
     PAGES = {133--150},
      ISSN = {0232-704X},
   MRCLASS = {58E10 (53C22)},
  MRNUMBER = {944778},
MRREVIEWER = {Dennis\ M.\ DeTurck},
       DOI = {10.1007/BF00127856},
       URL = {https://doi.org/10.1007/BF00127856},
}

@book {Hall2015,
    AUTHOR = {Hall, Brian C.},
     TITLE = {Lie groups, {L}ie algebras, and representations},
    SERIES = {Graduate Texts in Mathematics},
    VOLUME = {222},
   EDITION = {Second},
      NOTE = {An elementary introduction},
 PUBLISHER = {Springer, Cham},
      YEAR = {2015},
     PAGES = {xiv+449},
      ISBN = {978-3-319-13466-6},
   MRCLASS = {22-01 (17-01)},
  MRNUMBER = {3331229},
       DOI = {10.1007/978-3-319-13467-3},
       URL = {https://doi.org/10.1007/978-3-319-13467-3},
}

@article {Brian1987,
    AUTHOR = {White, Brian},
     TITLE = {The space of {$m$}-dimensional surfaces that are stationary
              for a parametric elliptic functional},
   JOURNAL = {Indiana Univ. Math. J.},
  FJOURNAL = {Indiana University Mathematics Journal},
    VOLUME = {36},
      YEAR = {1987},
    NUMBER = {3},
     PAGES = {567--602},
      ISSN = {0022-2518,1943-5258},
   MRCLASS = {58E15 (49F15)},
  MRNUMBER = {905611},
MRREVIEWER = {Helmut\ Kaul},
       DOI = {10.1512/iumj.1987.36.36031},
       URL = {https://doi.org/10.1512/iumj.1987.36.36031},
}

@article {Carlotto&Mondino2014,
    AUTHOR = {Carlotto, Alessandro and Mondino, Andrea},
     TITLE = {Existence of generalized totally umbilic 2-spheres in
              perturbed 3-spheres},
   JOURNAL = {Int. Math. Res. Not. IMRN},
  FJOURNAL = {International Mathematics Research Notices. IMRN},
  VOLUME = {2014},
      YEAR = {2014},
    NUMBER = {21},
     PAGES = {6020--6052},
      ISSN = {1073-7928,1687-0247},
   MRCLASS = {53C42},
  MRNUMBER = {3273070},
MRREVIEWER = {Maria\ Helena\ Noronha},
       DOI = {10.1093/imrn/rnt155},
       URL = {https://doi.org/10.1093/imrn/rnt155},
}

@article {Marque&NevesSurvey2014,
    AUTHOR = {Marques, Fernando C. and Neves, Andr\'e},
     TITLE = {The {W}illmore conjecture},
   JOURNAL = {Jahresber. Dtsch. Math.-Ver.},
  FJOURNAL = {Jahresbericht der Deutschen Mathematiker-Vereinigung},
    VOLUME = {116},
      YEAR = {2014},
    NUMBER = {4},
     PAGES = {201--222},
      ISSN = {0012-0456,1869-7135},
   MRCLASS = {53A10 (53C42)},
  MRNUMBER = {3280571},
MRREVIEWER = {Atsushi\ Fujioka},
       DOI = {10.1365/s13291-014-0104-8},
       URL = {https://doi.org/10.1365/s13291-014-0104-8},
}

@incollection {Hu&Li2004,
    AUTHOR = {Hu, Z. and Li, H.},
     TITLE = {Willmore submanifolds in a {R}iemannian manifold},
 BOOKTITLE = {Contemporary geometry and related topics},
     PAGES = {251--275},
 PUBLISHER = {World Sci. Publ., River Edge, NJ},
      YEAR = {2004},
      ISBN = {981-238-432-4},
   MRCLASS = {53C42},
  MRNUMBER = {2070875},
MRREVIEWER = {Shu-Cheng\ Chang},
       DOI = {10.1142/9789812703088\_0013},
       URL = {https://doi.org/10.1142/9789812703088_0013},
}

@article {Mondino&Riviere2013,
    AUTHOR = {Mondino, Andrea and Rivi\`ere, Tristan},
     TITLE = {Willmore spheres in compact {R}iemannian manifolds},
   JOURNAL = {Adv. Math.},
  FJOURNAL = {Advances in Mathematics},
    VOLUME = {232},
      YEAR = {2013},
     PAGES = {608--676},
      ISSN = {0001-8708,1090-2082},
   MRCLASS = {53A30 (35R01 49J53 58E15)},
  MRNUMBER = {2989995},
MRREVIEWER = {Basil\ J.\ Papantoniou},
       DOI = {10.1016/j.aim.2012.09.014},
       URL = {https://doi.org/10.1016/j.aim.2012.09.014},
}

@misc{wang2025willmoresurfaces4dimensionalconformal,
      title={Willmore surfaces in 4-dimensional conformal manifolds}, 
      author={Changping Wang and Zhenxiao Xie},
      year={2025},
      eprint={2306.00846},
      archivePrefix={arXiv},
      primaryClass={math.DG},
      url={https://arxiv.org/abs/2306.00846}, 
}

@article{Souam&Tobiana2009,
 author = {Souam, Rabah and Toubiana, Eric},
 title = {Totally umbilic surfaces in homogeneous 3-manifolds},
 fjournal = {Commentarii Mathematici Helvetici},
 journal = {Comment. Math. Helv.},
 issn = {0010-2571},
 volume = {84},
 number = {3},
 pages = {673--704},
 year = {2009},
 doi = {10.4171/CMH/177},
}

@article{Weiner1978,
 author = {Weiner, Joel L.},
 title = {On a problem of {Chen}, {Willmore} et al},
 fjournal = {Indiana University Mathematics Journal},
 journal = {Indiana Univ. Math. J.},
 issn = {0022-2518},
 volume = {27},
 pages = {19--35},
 year = {1978},
 doi = {10.1512/iumj.1978.27.27003},
}

@article{Bryant1984,
 author = {Bryant, Robert L.},
 title = {A duality theorem for {Willmore} surfaces},
 fjournal = {Journal of Differential Geometry},
 journal = {J. Differ. Geom.},
 issn = {0022-040X},
 volume = {20},
 pages = {23--53},
 year = {1984},
 doi = {10.4310/jdg/1214438991},
}

@article{Pinkall1985,
 author = {Pinkall, U.},
 title = {Hopf tori in {{\(S^ 3\)}}},
 fjournal = {Inventiones Mathematicae},
 journal = {Invent. Math.},
 issn = {0020-9910},
 volume = {81},
 pages = {379--386},
 year = {1985},
 doi = {10.1007/BF01389060},

}

@article{Kusner1987,
 author = {Kusner, Rob},
 title = {Conformal geometry and complete minimal surfaces},
 fjournal = {Bulletin of the American Mathematical Society. New Series},
 journal = {Bull. Am. Math. Soc., New Ser.},
 issn = {0273-0979},
 volume = {17},
 pages = {291--295},
 year = {1987},
 doi = {10.1090/S0273-0979-1987-15564-9},
}

@article{Kusner&Cya2024,
 author = {Hirsch, Jonas and Kusner, Rob and M{\"a}der-Baumdicker, Elena},
 title = {Geometry of complete minimal surfaces at infinity and the {Willmore} index of their inversions},
 fjournal = {Calculus of Variations and Partial Differential Equations},
 journal = {Calc. Var. Partial Differ. Equ.},
 issn = {0944-2669},
 volume = {63},
 number = {8},
 pages = {30},
 note = {Id/No 190},
 year = {2024},
 doi = {10.1007/s00526-024-02792-8},
}

@article{Riviere&Bernard2014,
 author = {Bernard, Yann and Rivi{\`e}re, Tristan},
 title = {Energy quantization for {Willmore} surfaces and applications},
 fjournal = {Annals of Mathematics. Second Series},
 journal = {Ann. Math. (2)},
 issn = {0003-486X},
 volume = {180},
 number = {1},
 pages = {87--136},
 year = {2014},
 doi = {10.4007/annals.2014.180.1.2},
}

@misc{Bettiol&Piccione2025,
      title={Bifurcations of Clifford tori in ellipsoids}, 
      author={Renato G. Bettiol and Paolo Piccione},
      year={2025},
      eprint={2309.13758},
      archivePrefix={arXiv},
      primaryClass={math.DG},
      url={https://arxiv.org/abs/2309.13758}, 
}

@article{Bettiol&Piccione2025-2,
   title={Nonplanar minimal spheres in ellipsoids of revolution},
   ISSN={0391-173X},
   url={http://dx.doi.org/10.2422/2036-2145.202309_018},
   DOI={10.2422/2036-2145.202309_018},
   journal={ANNALI SCUOLA NORMALE SUPERIORE - CLASSE DI SCIENZE},
   publisher={Scuola Normale Superiore - Edizioni della Normale},
   author={Renato G. Bettiol and Paolo Piccione},
   year={2024},
   month=May, pages={32} 
}

@article{Bettiol&Piccione2020,
 author = {Renato G. Bettiol and Paolo Piccione},
 title = {Instability and bifurcation},
 fjournal = {Notices of the American Mathematical Society},
 journal = {Notices Am. Math. Soc.},
 issn = {0002-9920},
 volume = {67},
 number = {11},
 pages = {1679--1691},
 year = {2020},
 doi = {10.1090/noti2185},
}

@article{Bettiol&Piccione2016,
 author = {Renato G. Bettiol and Paolo Piccione},
 title = {Delaunay-type hypersurfaces in cohomogeneity one manifolds},
 fjournal = {IMRN. International Mathematics Research Notices},
 journal = {Int. Math. Res. Not.},
 issn = {1073-7928},
 volume = {2016},
 number = {10},
 pages = {3124--3162},
 year = {2016},
 doi = {10.1093/imrn/rnv231},
}

@article{Bettiol&Piccione2022,
 author = {Bettiol, Renato G. and Piccione, Paolo},
 title = {Global bifurcation for a class of nonlinear {ODEs}},
 fjournal = {S{\~a}o Paulo Journal of Mathematical Sciences},
 journal = {S{\~a}o Paulo J. Math. Sci.},
 issn = {1982-6907},
 volume = {16},
 number = {1},
 pages = {486--507},
 year = {2022},
 doi = {10.1007/s40863-022-00290-3},
}

@article{Lima&Lira2014,
 author = {De Lima, L. L. and De Lira, J. H. and Piccione, P.},
 title = {Bifurcation of {Clifford} tori in {Berger} {{\(3\)}}-spheres},
 fjournal = {The Quarterly Journal of Mathematics},
 journal = {Q. J. Math.},
 issn = {0033-5606},
 volume = {65},
 number = {4},
 pages = {1345--1362},
 year = {2014},
 doi = {10.1093/qmath/hau003},
}

@article{Luis&Piccione2013,
 author = {Al{\'{\i}}as, Luis J. and Piccione, Paolo},
 title = {Bifurcation of constant mean curvature tori in {Euclidean} spheres},
 fjournal = {The Journal of Geometric Analysis},
 journal = {J. Geom. Anal.},
 issn = {1050-6926},
 volume = {23},
 number = {2},
 pages = {677--708},
 year = {2013},
 doi = {10.1007/s12220-011-9260-6},
}

@article{Daniel2007,
 author = {Daniel, Benoît},
 title = {Isometric immersions into 3-dimensional homogeneous manifolds},
 fjournal = {Commentarii Mathematici Helvetici},
 journal = {Comment. Math. Helv.},
 issn = {0010-2571},
 volume = {82},
 number = {1},
 pages = {87--131},
 year = {2007},
 doi = {10.4171/CMH/86},
}

@article{Torralbo&Urbano2010,
 author = {Torralbo, Francisco},
 title = {Rotationally invariant constant mean curvature surfaces in homogeneous 3-manifolds},
 fjournal = {Differential Geometry and its Applications},
 journal = {Differ. Geom. Appl.},
 issn = {0926-2245},
 volume = {28},
 number = {5},
 pages = {593--607},
 year = {2010},
 language = {English},
 doi = {10.1016/j.difgeo.2010.04.007},
}

@article{Torralbo&Urbano2012,
 author = {Torralbo, Francisco and Urbano, Francisco},
 title = {Compact stable constant mean curvature surfaces in homogeneous 3-manifolds},
 fjournal = {Indiana University Mathematics Journal},
 journal = {Indiana Univ. Math. J.},
 issn = {0022-2518},
 volume = {61},
 number = {3},
 pages = {1129--1156},
 year = {2012},
 doi = {10.1512/iumj.2012.61.4667},
}

@article{Torralbo2012,
 author = {Torralbo, Francisco},
 title = {Compact minimal surfaces in the {Berger} spheres},
 fjournal = {Annals of Global Analysis and Geometry},
 journal = {Ann. Global Anal. Geom.},
 issn = {0232-704X},
 volume = {41},
 number = {4},
 pages = {391--405},
 year = {2012},
 doi = {10.1007/s10455-011-9288-7},
}
\end{document}